\numberwithin{equation}{section}
\theoremstyle{plain}
\newtheorem{remark}{Remark}
\newtheorem{theorem}{Theorem}
\newtheorem{lemma}[theorem]{Lemma}
\newtheorem{corollary}[theorem]{Corollary}
\newtheorem{proposition}[theorem]{Proposition}
\theoremstyle{definition}
\newtheorem{definition}{Definition}
\def\cone{\mathscr{C}}
 \def\m{\mathsf{m}}
\def\R{{\mathbb R}}
\def\N{{\mathbb N}}
\def\exp{\operatorname{exp}}
\def\cexp{\operatorname{c-exp}}
\def\dcone{\d_{\cone(M)}}
\def\ddcone{\d^2_{\cone(M)}}
\def\coneexp{\operatorname{\ddcone-exp}}
\def\Id{\operatorname{Id}}
\def\Dens{\operatorname{Dens}}
\def\Div{\operatorname{div}}
\def\Diff{\operatorname{Diff}}
\let\on=\operatorname
\newcommand{\FF}{F}
\newcommand{\spt}{\on{supp}}
\newcommand{\ud}{\,\mathrm{d}}
\renewcommand{\d}{\,\mathrm{d}}
\def\autgc{ \on{\overline{Aut}}}
\def\isog{ \autgc_{\on{vol}}}
\def\isogh{ \on{\overline{Aut}_{\rho_0,\mu_0}}}
\def\Mes{ \mathcal{M}es}
\newcommand{\eqdef}{\ensuremath{\stackrel{\mbox{\upshape\tiny 
}}{:=}}}
\begin{document}
\title[Regularity theory for unbalanced optimal transport]{Regularity theory and geometry of unbalanced optimal transport}
\author{Thomas Gallou\"et} 
\address{INRIA, Project team Mokaplan, Université Paris-Dauphine, PSL Research University, UMR CNRS 7534-Ceremade.}
\email{thomas.gallouet@inria.fr}
\author{Roberta Ghezzi}
\address{Dipartimento di Matematica, Universit\`a degli Studi di Roma ``Tor Vergata'', Rome Italy, orcid ID 0000-0003-2305-7627} 
\email{ghezzi@mat.uniroma2.it}
\author{Fran\c{c}ois-Xavier Vialard}
\address{Université Gustave Eiffel, LIGM, CNRS \\ INRIA, Project team Mokaplan}
\email{fxvialard@normalesup.org}
\maketitle

\begin{abstract}
Using the dual formulation only, we show that the regularity of unbalanced optimal transport also called entropy-transport inherits from the regularity of standard optimal transport. 
We provide detailed examples of Riemannian manifolds and costs for which unbalanced optimal transport is regular.
Among all entropy-transport formulations, Wasserstein-Fisher-Rao (WFR) metric, also called Hellinger-Kantorovich, stands out since it admits a dynamic formulation, which extends the Benamou-Brenier formulation of optimal transport. 
After demonstrating the equivalence between dynamic and static formulations on a closed Riemannian manifold, we prove a polar factorization theorem, similar to the one due to Brenier and Mc-Cann. 
As a byproduct, we formulate the Monge-Ampère equation associated with WFR metric, which also holds for more general costs. Last, we study the link between $c$-convex functions for the cost induced by the WFR metric and the cost on the cone. The main result is that the weak Ma-Trudinger-Wang condition on the cone implies the same condition on the manifold for the cost induced by WFR. 
\end{abstract}

\tableofcontents

\section{Introduction}
In the past few years, optimal transport has seen an impressive development mainly driven by applied fields in which real data require robust and largely applicable models. 
In many applications, data are modeled by probability distributions. 
To compare two such distributions, optimal transport (OT) gives a geometrically meaningful distance. Indeed, OT lifts a distance on the base space to the space of probability measures.
In OT, the underlying idea consists in explaining the variation of mass between measures via displacement, thereby having a global constraint of equal total mass for the two measures.
The last constraint can easily be alleviated with global renormalization but the obtained model will not be able to account for possible local change of mass.
Considering this shortcoming \cite{GeorgiouPower,refId0}, it was natural to enrich the model using local change of mass as proposed by the last author and co-authors and independently by others in \cite{GeneralizedOT1,GeneralizedOT2,new2015kondratyev,LMS}.

When looking for a generalization of optimal transport to unnormalized measures, there are at least two possible directions. The first one consists in extending the Kantorovich formulation of optimal transport, which is static in contrast to the Benamou-Brenier formulation. This idea amounts to relaxing the marginal constraints using some divergence such as the relative entropy (Kullback-Leibler). By doing so, it is not trivial to know whether the resulting functional gives a proper distance between positive densities. The second one is to start by the dynamic formulation of Benamou and Brenier \cite{benamou2000computational}, which is of interest since it uncovers the Riemannian-like structure of the Wasserstein metric for the $L^2$ cost. A natural Riemannian tensor on the space of densities which is one-homogeneous with respect to rescaling of mass is the Hessian of the entropy, known as the Fisher-Rao metric when restricted to the set of probability densities.

The latter idea was the starting point of the concurrent works \cite{GeneralizedOT1,GeneralizedOT2,new2015kondratyev,LMS} that introduced what is now called \emph{unbalanced optimal transport} and which has seen several applications in data sciences. Arguably, the most significant result on this model is the equivalence between the static formulation and the dynamic formulation \cite{GeneralizedOT2,LMS}. Importantly, the article \cite{LMS} gives another characterization of unbalanced optimal transport as a standard optimal transport problem on the cone over the base manifold with second-order moment constraints. This formulation was exploited in  \cite{GALLOUET20184199,AndreaEmbedding} to reformulate the Camassa-Holm equation as a standard incompressible Euler equation on an extension of the cone. Then, generalized flows {\it à la Brenier} were studied in \cite{AndreaRelaxation} for the  Camassa-Holm equation and its higher-dimensional extension. Other interesting extensions and related works of the unbalanced framework include the projection of this distance to the set of probability measures using homogeneity property \cite{laschos2018geometric} and gradient flows that retain more convexity than standard Wasserstein gradient flows \cite{kondratyev2019spherical,kondratyev2019convex}. The dynamic formulation of unbalanced optimal transport has also drawn some interest \cite{bredies2020superposition,bauer2021square}, for defining new metrics between metric measure spaces \cite{deponti2020entropytransport,sejourne2020unbalanced}.
Applications of unbalanced optimal transport are numerous \cite{pmlr-v129-wang20c,shen2021accurate,sjourn2019sinkhorn,sejourne2020unbalanced,Feydy2017}, in particular in data science and computer vision, since this model is more robust in some sense than standard optimal transport and computationally feasible using entropic regularization \cite{chizat2016scaling}.

An open question in this unbalanced framework is the issue of regularity. In the context of standard optimal transport, regularity appeared after Brenier stated the existence of an optimal transport map under mild conditions in Euclidean space. Since then, an ``implicit'' regularity of optimal transport was discovered in \cite{caf92} and following works, see \cite{F-DP} for a recent overview.
Regularity does not hold in general but it is observed when the underlying densities are regular and have convex support in Euclidean space. These results are based on Monge-Ampère equation and they have triggered a number of works concerned with extensions to Riemannian manifolds \cite{McCann2001}, culminating with the identification of the Ma-Trudinger-Wang (MTW) tensor, related to the sectional curvature tensor of a pseudo-Riemannian metric \cite{kim2007continuity}. The nonnegativity of this tensor is a necessary condition for the smoothness of standard optimal transport.

\textbf{Contributions and structure of the article. }
In this paper, we address the question of regularity of unbalanced optimal transport.
We focus on two important instances of the problem that give rise to a metric on the space of positive Radon measures, namely the Wasserstein-Fisher-Rao (or Hellinger-Kantorovich) and the Gaussian-Hellinger distances.
In contrast with standard optimal transport, there is not just a single map that is the solution of the problem. 
However, the objects of interest are still encoded via optimal potentials, on which regularity can be studied. Alternatively, regularity can also be tackled from the primal formulation.  Indeed, a plan that minimizes the primal formulation of unbalanced optimal transport is an optimal transport plan between its marginals. 
\par
From the above remarks, it is natural to expect that the regularity of the potentials is inherited from regularity theory for optimal transport. 
This is actually the case, and we prove this fact, Theorem~\ref{ThReduction},  in Section \ref{secthm4}   by studying the dual formulation and in particular its first-order optimality condition which encodes optimal transport between the optimal marginals of the primal formulation. Starting from the general formulation of \cite{LMS}, our regularity theorem requires Lipschitz regularity of the optimal potentials. The existence of Lipschitz potentials is the main question  of Section  \ref{SecExistenceLipschitz}, where we answer positively,  under geometric conditions on the measures.  We apply our results in Section \ref{sec:leeli} to obtain regularity of unbalanced OT for Gaussian-Hellinger and Wasserstein-Fisher-Rao. In particular, Gaussian-Hellinger is regular on the sphere and the Euclidean space, whereas Wasserstein-Fisher-Rao is regular only on the sphere but not on the Euclidean space.
We then focus in Section \ref{SecStaDyn} on the Wasserstein-Fisher-Rao metric for which we show the equivalence between static and dynamic formulations on a closed Riemannian manifold.  
To derive our main contribution in this section, we take advantage of a geometric point of view to show a polar factorization \cite{Brenier1991, McCann2001} theorem on a semi-direct product of groups, which is the natural extension of the diffeomorphism group to the unbalanced setting. 
Such a decomposition inherits the regularity results of unbalanced optimal transport. 
Our main contribution on the geometric side is presented in Section \ref{SecCconvex} in which we study $c$-convex function for the cost on the cone and the cost induced by the Wasserstein-Fisher-Rao metric. We prove that if the so-called weak MTW condition holds on the cone then it is also the case for the cost induced by Wassertein-Fisher-Rao. Surprisingly, a converse result holds for \emph{cross-curvature} as introduced by Kim and McCann in \cite{kim2007continuity}.

\section{Regularity of unbalanced optimal transport}\label{SecDualRegularity}

\subsection{From optimal transport regularity to unbalanced optimal transport regularity}\label{secthm4}

In what follows, we use the notation $X,Y$ for two spaces that are either Euclidean spaces, bounded convex sets of Euclidean spaces, or Riemannian manifolds. The results in this section apply to the more general setting of \cite{LMS} but since we are interested in regularity theory, we choose to focus on the aforementioned cases.

We consider the general case of an entropy function,  that replaces the relative entropy.   
\begin{definition}
An {\it entropy function} $\FF: \R \to [0,+\infty]$  is a 
 convex, lower semi-continuous, nonnegative function such that $\FF(1) = 0$ and $\FF(x) = +\infty$ if $x<0$. Its {\it recession constant} is defined as $\FF^{'}_{\infty} = \lim_{r\to+\infty} \frac{\FF(r)}{r}$. 
\end{definition}
In the sequel, we denote by $\partial G(x_0)$ the subdifferential of a function $G:\R\to\R$ at a point $x_0$.
\begin{proposition}
The Legendre-Fenchel transform of $\FF$, denoted by $\FF^*$, has a domain of definition $\on{dom}(\FF^*) = (-\infty,\FF^{'}_{\infty}]$ and it satisfies
\begin{equation*}
    \partial \FF^*(\on{dom}(\FF^*)) \subset \R_{\geq 0}\,.
\end{equation*}
Moreover, if $\partial \FF(0) = +\infty$, then $\partial \FF^*(\on{dom}(\FF^*)) \subset \R_{>0}$.
\end{proposition}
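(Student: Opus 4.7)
The plan is to unfold the Legendre transform $F^*(y) = \sup_{x \geq 0}(xy - F(x))$, which reduces to a supremum over $x \geq 0$ because $F \equiv +\infty$ on $\R_{<0}$, and then to combine the recession characterization of $F'_\infty$ with the Young equivalence $x \in \p F^*(y) \iff y \in \p F(x)$.

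For the effective domain I would treat $y > F'_\infty$ and $y < F'_\infty$ by comparing $F(x)$ with $(y \mp \varepsilon)x$ for large $x$, using $F(r)/r \to F'_\infty$. In the first case $xy - F(x) \geq \varepsilon x \to +\infty$, so $F^*(y) = +\infty$; in the second, $xy - F(x) \to -\infty$, while lower semi-continuity and non-negativity bound $xy - F(x)$ above on compact subsets of $[0,+\infty)$, giving $F^*(y) < +\infty$. The boundary point $y = F'_\infty$ is handled by convexity: the map $x \mapsto F(x) - F'_\infty x$ is non-increasing since all secant slopes of $F$ are at most $F'_\infty$; the stated equality $\on{dom}(F^*) = (-\infty, F'_\infty]$ is therefore to be read in the closed sense.

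The positivity of $\p F^*$ follows directly from the Young equivalence: any $x \in \p F^*(y)$ satisfies $y \in \p F(x)$, but $F = +\infty$ on $\R_{<0}$ admits no subgradient at a negative point, forcing $x \geq 0$. Under the additional assumption $\p F(0) = +\infty$, read in the standard sense that no finite slope is a subgradient of $F$ at $0$ (for instance $F'_+(0) = -\infty$ as for Kullback--Leibler, or $F(0) = +\infty$), the same equivalence also excludes $x = 0$, upgrading the inclusion to $\p F^*(\on{dom}(F^*)) \subset \R_{>0}$.

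The one genuinely subtle point, which I expect to be the main obstacle, is the boundary case $y = F'_\infty$: depending on whether $\lim_{x \to \infty}\bigl(F(x) - F'_\infty x\bigr)$ is finite or $-\infty$ (the latter occurring, e.g.\ for the reverse Kullback--Leibler entropy), $F^*(F'_\infty)$ itself is finite or $+\infty$. Away from that endpoint, the proof is a direct application of convex duality to the structural constraints $F \geq 0$, $F(1) = 0$, and $F \equiv +\infty$ on the negative half-line.
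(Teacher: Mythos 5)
The paper states this proposition without proof, treating it as a standard fact of convex analysis, so there is no argument in the text to compare against. Your proof is correct: restricting the supremum to $x\geq 0$, comparing $F(x)$ against $(y\mp\varepsilon)x$ via the recession limit $F(r)/r\to F'_\infty$, observing that $x\mapsto F(x)-F'_\infty x$ is non-increasing because all secant slopes of a convex $F$ are bounded by $F'_\infty$, and using the Fenchel--Young equivalence $x\in\partial F^*(y)\iff y\in\partial F(x)$ together with $\partial F=\emptyset$ on $\R_{<0}$ (and, under the extra hypothesis, at $0$ as well) are all correctly deployed and exactly the steps one would expect.

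One remark worth highlighting is the caveat you flag at the end: the blanket equality $\on{dom}(F^*)=(-\infty,F'_\infty]$ asserted in the proposition is not literally true for every entropy function, since $F^*(F'_\infty)=-\lim_{x\to\infty}\bigl(F(x)-F'_\infty x\bigr)$ may be $+\infty$. A concrete instance satisfying all the axioms of the paper's Definition of entropy function is the reverse Kullback--Leibler entropy $F(x)=x-1-\log x$ for $x>0$ (with $F(0)=+\infty$), for which $F'_\infty=1$, $F^*(y)=-\log(1-y)$ on $(-\infty,1)$, and $F^*(1)=+\infty$; here $\on{dom}(F^*)=(-\infty,1)$ is open. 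For the paper's central example $F(x)=x\log x - x +1$ one has $F'_\infty=+\infty$, so the boundary point never arises and the imprecision is harmless in context, but your version of the statement is the accurate one.
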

\begin{remark}\label{rem_FKL}
The hypothesis  $\partial \FF(0) = +\infty$ is satisfied, for instance, 
for $\FF(x) = x\log(x) - x + 1$, arguably the most important and most frequent entropy function used in unbalanced optimal transport. In this case, the Legendre-Fenchel transform is $\FF^*(x) = e^{x} - 1$.
\end{remark}

\begin{definition}\label{ThCsiszar}
Let $\FF$ be an entropy function and $\mu,\nu$ be Radon measures on a Riemannian manifold $M$. The {\it Csiszàr divergence} associated with $\FF$ is
\begin{equation*}
D_\FF(\mu,\nu) = \int_M \FF\left(\frac{\ud \mu(x)}{\ud \nu(x) }\right) \ud \nu(x) + \FF^{'}_{\infty} \int_M\ud \mu^{\perp}\,,
\end{equation*}
where $\mu^{\perp}$ is the orthogonal part of the Lebesgue decomposition of $\mu$ with respect to $\nu$. 
\end{definition}
For $\FF(x) = x\log(x) - x + 1$, $D_\FF$ is also known as  {\it Kullback-Leibler divergence} or {\it relative entropy}, and it reads
\begin{equation*}
\on{KL}(\mu, \nu) = \int \frac{\ud \mu}{\ud \nu} \log\left(\frac{\ud \mu}{\ud \nu}\right) \ud \nu + |\nu| - |\mu|  \,.
\end{equation*}

Given $\FF$, the resulting divergence $D_\FF$ is jointly convex and lower semi-continuous on the space of pairs of finite and positive Radon measures, see \cite[Corollary 2.9]{LMS}.
We can now define the primal formulation of unbalanced optimal transport, which is similar to the Kantorovich formulation of optimal transport. We denote by $\mathcal{M}_+(X)$ the space of finite and positive Radon measures on $X$.
As is standard in optimal transportation, we need a cost function, i.e., a function $c:M\times M\to\R\cup\{+\infty\}$ that is assumed to be bounded below. Remark that, in our setting, cost functions are allowed to be unbounded above. 

\begin{definition}[Kantorovich UOT]\label{Def:UOTkanto} 
Let $(\rho_0,\rho_1)\in\mathcal{M}_+(X)\times\mathcal{M}_+(Y)$ and $\FF_0,\FF_1$ be entropy functions. The {\it unbalanced optimal transport problem} is defined as
\begin{equation}\label{EqPrimalUOT}
    \on{UOT}(\rho_0,\rho_1) = \inf_{\gamma \in \mathcal{M}_+(X\times Y)} D_{\FF_{0}}(\gamma_0,\rho_0) + D_{\FF_1}(\gamma_1,\rho_1) + \int_{X\times Y} c(x,y) \ud \gamma(x,y)\,,
\end{equation}
where $\gamma_0,\gamma_1$ are marginals of $\gamma$, and $c:X\times Y\to \R\cup \{+\infty\}$ is a cost function (see Definition~\ref{costi} below).
\end{definition}
The distance between two Dirac masses can be computed explicitly. Let $\rho_0 = r\delta_x $, $\rho_1 = s\delta_y$, in order to compute $\on{UOT}(\rho_0,\rho_1)$ one has to compute the local quantity 
\begin{equation*}
\mathcal{D}((x,r),(y,s)) \coloneqq \inf_{z \in \R_{>0}} \left(rF_0(z/r) + sF_1(z/s) + c(x,y)z\right)\,.
\end{equation*}
 For the Kullback-Leibler divergence, i.e., when $F_0(x)=F_1(x)=x\log x-x+1$,  this quantity can be computed explicitly as 
\begin{equation*}
    \mathcal{D}((x,r),(y,s)) = r + s - 2e^{-c(x,y)/2}\sqrt{rs}.
\end{equation*}
This will be useful to achieve the Monge formulation of UOT.

The UOT problem has many equivalent formulations. 
In this section, we focus on the dual formulation of \eqref{EqPrimalUOT} given by the Fenchel-Rockafellar theorem.

\begin{proposition}[Dual UOT]\label{prop:formulationduale}
The dual formulation of \eqref{EqPrimalUOT} is 
\begin{equation}\label{EqDualUOTProblem}
    \sup_{(z_0,z_1) \in C_b(X)\times C_b(Y)} -\int_X \FF_0^*(-z_0(x)) \ud \rho_0(x) -\int_Y \FF_1^*(-z_1(y)) \ud \rho_1(y)
\end{equation}
under the constraint \begin{equation}\label{EqInequalityConstraint}
    z_0(x) + z_1(y) \leq c(x,y)\,.
\end{equation}
\end{proposition}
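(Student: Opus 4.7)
The plan is to apply the Fenchel-Rockafellar duality theorem to a suitable decomposition of the primal functional in \eqref{EqPrimalUOT}. Define the marginal operator
\[
A: \mathcal{M}(X\times Y) \to \mathcal{M}(X)\times\mathcal{M}(Y), \qquad A\gamma = (\gamma_0,\gamma_1),
\]
and set
\[
\Phi(\gamma) = \int_{X\times Y} c\,\ud\gamma + \chi_{\mathcal{M}_+(X\times Y)}(\gamma), \qquad \Psi(\mu_0,\mu_1) = D_{\FF_0}(\mu_0,\rho_0) + D_{\FF_1}(\mu_1,\rho_1),
\]
so that $\on{UOT}(\rho_0,\rho_1) = \inf_\gamma \Phi(\gamma) + \Psi(A\gamma)$. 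Both functionals are convex and lower semicontinuous, the latter by \cite[Corollary 2.9]{LMS}. I would then write the Fenchel-Rockafellar identity
\[
\inf_\gamma \Phi(\gamma) + \Psi(A\gamma) \;=\; \sup_{(z_0,z_1)\in C_b(X)\times C_b(Y)} -\Phi^*(A^*(z_0,z_1)) - \Psi^*(-z_0,-z_1),
\]
with $A^*(z_0,z_1)(x,y) = z_0(x)+z_1(y)$ and dualities performed against the preduals $C_b$.

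Next I would compute the two Legendre transforms. Because $\Phi$ is linear plus a cone indicator, a short calculation gives $\Phi^*(\phi) = \chi_{\{\phi\leq c\}}(\phi)$, so that $\Phi^*(A^*(z_0,z_1))$ encodes exactly the inequality constraint \eqref{EqInequalityConstraint}. For $\Psi$, the sum splits, and for each Csiszár divergence I would invoke the standard Rockafellar theorem on integral functionals to obtain
\[
\sup_\mu \langle -z,\mu\rangle - D_\FF(\mu,\rho) \;=\; \int_X \FF^*(-z)\,\ud\rho,
\]
noting that the recession term $\FF^{'}_\infty\int\ud\mu^{\perp}$ in the definition of $D_\FF$ is precisely what forces the effective constraint $-z \leq \FF'_\infty$, i.e.\ $-z\in\on{dom}(\FF^*)$, to emerge automatically on the dual side. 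Summing the two contributions yields the objective $\mathcal{S}(z_0,z_1)$ of \eqref{EqDualUOTProblem}.

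The last step is to verify the constraint qualification needed for the Fenchel-Rockafellar identity (no duality gap and attainment of the sup in principle). Here I would exhibit a feasible competitor where everything is finite and $\Psi$ is continuous in an appropriate topology at $A\gamma$: for instance, taking $\gamma$ to be a smooth, strictly positive density times the reference volume on $X\times Y$ makes both $D_{\FF_0}(\gamma_0,\rho_0)$ and $D_{\FF_1}(\gamma_1,\rho_1)$ finite whenever $\rho_0,\rho_1$ are themselves finite measures, and a standard perturbation argument in $C_b$ provides the required continuity.

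The main technical obstacle is the infinite-dimensional pairing between $C_b$ and $\mathcal{M}$: one has to justify that the supremum in the dual may be restricted to bounded continuous functions (rather than, say, merely measurable test functions) and that no duality gap arises for all relevant $(\rho_0,\rho_1)\in\mathcal{M}_+(X)\times\mathcal{M}_+(Y)$. Rather than reprove these infinite-dimensional facts from scratch, I would invoke the framework of \cite{LMS}, which treats exactly this situation for entropy-transport problems and supplies the qualification conditions and the attainment results in the form needed here.
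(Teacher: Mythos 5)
The paper does not prove this proposition; it simply points to \cite[Proposition~4.3]{LMS}, and your Fenchel--Rockafellar decomposition (linear cost plus cone indicator on the plan side, sum of Csisz\`ar divergences on the marginal side) is the same route that reference takes, so the overall strategy is sound and appropriate.

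There is, however, a genuine gap in the constraint-qualification step as you wrote it. You propose to take $\gamma$ a smooth, strictly positive density so that $D_{\FF_0}(\gamma_0,\rho_0)$ and $D_{\FF_1}(\gamma_1,\rho_1)$ are finite; but for $\FF'_\infty=+\infty$ (in particular for the Kullback--Leibler case singled out in Remark~\ref{rem_FKL}, which is the main example throughout the paper), $D_\FF(\mu,\rho)=+\infty$ whenever $\mu$ has a nontrivial part singular with respect to $\rho$. If $\rho_0$ or $\rho_1$ has an atom or any other component singular with respect to the reference volume, then the marginals of your smooth $\gamma$ are singular with respect to it and the divergence is infinite, so your chosen point is \emph{not} feasible. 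More structurally, $\Psi$ is only lower semicontinuous for the weak-$*$ topology on $\mathcal{M}_+\times\mathcal{M}_+$, not continuous, so the ``interior point / continuity at a feasible point'' version of Fenchel--Rockafellar is not directly available on the measure side. The clean way to make this rigorous, and the one used in \cite{LMS}, is to run the duality in the opposite direction: treat $C_b(X)\times C_b(Y)$ with the sup norm as the primary Banach space, define the concave functional $\mathcal{S}$ (with the constraint \eqref{EqInequalityConstraint} built in as an indicator), and compute its Fenchel conjugate to recover \eqref{EqPrimalUOT}. There the qualification hypothesis is elementary: taking $z_0,z_1$ equal to sufficiently negative constants, the constraint is strictly satisfied (using that $c$ is bounded below) and $z\mapsto\int\FF^*(-z)\,\ud\rho$ is sup-norm continuous because $\FF^*$ is finite and continuous on the relevant half-line. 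If you intend to keep the primal-side formulation, you should at least replace the proposed test point by one with marginals absolutely continuous with respect to $\rho_0,\rho_1$ and explain which version of Fenchel--Rockafellar you invoke on the non-Banach measure space.
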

For the proof in the general case, see for instance \cite[Proposition 4.3]{LMS}.
\par
Our goal is to show that the regularity of unbalanced optimal transport follows from the regularity of standard optimal transport for the cost $c$. This result can be expected since, once the optimal marginals $\gamma_0,\gamma_1$ are fixed in  \eqref{EqPrimalUOT}, optimizing on the plan $\gamma$ (with fixed marginals) is indeed a standard optimal transport problem between $\gamma_0$ and $\gamma_1$ for the cost $c$.

\begin{lemma}[Linearized UOT]\label{ThLinearization}
Assume that the entropy functions $\FF_i$ are differentiable on their domain.
Let $(z_0^\star,z_1^\star)  \in C_b(X) \times C_b(Y)$ be a pair of optimal potentials for the dual problem \eqref{EqDualUOTProblem} satisfying $\on{range}(-z_i^\star) \subset \on{dom}(\FF_i^*)$. 
Then $(z_0^\star,z_1^\star)$ is a solution of the standard optimal transport problem 
\begin{equation}\label{LDOT}
\sup_{(z_0,z_1)\in C_b(X)\times C_b(Y)} \int_X z_0(x) \ud \tilde \rho_0(x) + \int_Y z_1(y) \ud \tilde \rho_1(y) 
\end{equation}
under the constraint $z_0(x) + z_1(y) \leq c(x,y)$ where $\tilde \rho_i =  {\FF_i^*}'(-z_i^\star) \rho_i$ for $i = 0,1$.
\end{lemma}

\begin{proof} Let $(\delta z_0,\delta z_1)\in C_b(X)\times C_b(Y)$ denote a first order admissible variation of $(z_0,z_1) \in C_b(X)\times C_b(Y)$ satisfying the inequality constraint $z_0(x) + \delta z_0(x) + z_1(y) + \delta z_1(y) \leq c(x,y)$. 
Differentiating the dual functional \eqref{EqDualUOTProblem}, we obtain 
\begin{equation*}
    \int_X \delta z_0(x)  {\FF_0^*}'(-z_0(x)) \ud \rho_0(x)  + \int_Y \delta z_1(y) {\FF_1^*}'(-z_1(y)) \ud \rho_1(y)\,.
\end{equation*}
At $(z_0^\star,z_1^\star)$ the optimality implies, for all admissible $(\delta z_0^\star,\delta z_1^\star)$,
\begin{equation*}
    \int_X \delta z_0^\star(x)  {\FF_0^*}'(-z_0^\star) \ud \rho_0(x)  + \int_Y \delta z_1^\star(y) {\FF_1^*}'(-z_1^\star(y)) \ud \rho_1(y)\leq 0,
\end{equation*}
or, equivalently, by linearity, 
\begin{multline}
    \int_X \left(z_0^\star+ \delta z_0^\star(x) \right) {\FF_0^*}'(-z_0^\star) \ud \rho_0(x)  + \int_Y \left(z_1^\star+\delta z_1^\star(y)\right) {\FF_1^*}'(-z_1^\star(y)) \ud \rho_1(y)\leq \\ \label{aide}\int_X  z_0^\star(x)  {\FF_0^*}'(-z_0^\star) \ud \rho_0(x)  + \int_Y  z_1^\star(y) {\FF_1^*}'(-z_1^\star(y)) \ud \rho_1(y),
\end{multline}
for all $(\bar z_0,\bar z_1)=(z_0^\star+ \delta z_0^\star,z_1^\star+ \delta z_1^\star)$ satisfying 
$\bar z_0(x) + \bar z_1(y) \leq c(x,y)$. 
Inequality \eqref{aide} exactly states that $(z_0^\star,z_1^\star)$ is optimal in the constrained problem \eqref{LDOT}.

\end{proof}
\begin{remark}
An immediate consequence of this proof is that the corresponding Radon measures $\tilde \rho_i$ have the same total mass. 
Indeed, given a pair of potentials $(z_0,z_1)$ satisfying \eqref{EqInequalityConstraint}, for every  $\lambda \in \R$ the pair  $(z_0 + \lambda,z_1- \lambda)$ still satisfies  \eqref{EqInequalityConstraint}. 
However, the linearized objective functional differs with the term $\lambda (|\tilde \rho_0| - |\tilde \rho_1|)$ where $|\cdot |$ denotes total mass.
This term can be made arbitrarily large unless $|\tilde \rho_0| = |\tilde \rho_1|$, thus contradicting the fact that the linearization is bounded.
\end{remark}
\begin{remark}\label{regpde} 
Thanks to Lemma~\ref{ThLinearization} and Brenier's work \cite[Section~1.4]{Brenier1991}, given  $(z_0^\star,z_1^\star)$, being optimal for the  problem in Proposition \ref{prop:formulationduale} can be taken as the definition of  variational solutions to a $\on{UOT}$-Monge-Amp\`ere equation given by 
\begin{multline}\label{rregpde}
   \det\left[ -\nabla^2 z_0^\star(x) +(\nabla_{xx}^2 c)(x,(\nabla_{x} c(x,\cdot))^{-1} z_0^\star(x))\right] \\ = \left|\det \left[ (\nabla_{x,y} c)(x,(\nabla_{x} c(x,\cdot))^{-1} z_0^\star(x)) \right] \right|  \frac{ {\FF_0^*}'(-z_0^\star)\rho_0(x)}{{\FF_1^*}'(-z_1^\star(y))\rho_1\circ (\nabla_{x} c(x,\cdot))^{-1} z_0^\star(x)) }\,, 
\end{multline}
where the right-hand side also depends on $z_0^\star$. We detail this computation in the  Gaussian-Hellinger and Hellinger-Kantorovich case in Proposition~\ref{prop:approx} below.
\end{remark}
By Remark~\ref{regpde}, regularity properties for optimal potentials are related to regularity properties of solutions to partial differential equations as in \eqref{rregpde}.
Before stating the main result of this section, we introduce the following definition, which essentially encapsulates the regularity of standard optimal transport needed for its extension to the unbalanced setting. 

\begin{definition}\label{regu}
Let $k\in\N$, $\alpha\in(0,1)$.
Let $(\rho_0,\rho_1)\in \mathcal M_+(X)\times \mathcal M_+(Y)$ be two  measures that are absolutely continuous with respect to some reference volume with 
densities
$(\rho_0,\rho_1)\in  C^{k,\alpha}(X)\times C^{k,\alpha}(Y)$. We say that $(\rho_0,\rho_1)$ is a {\it $k$-regular pair of measures} if, for every $0 \leq l \leq k$ and every pair $(\lambda_0,\lambda_1) \in C^{l,\alpha}(X)\times C^{l,\alpha}(Y)$ of positive functions bounded away from zero and infinity, the optimal potentials, solutions to the standard optimal transport problem between the pair $(\tilde \rho_0,\tilde\rho_1)$, where $\tilde\rho_i = \frac{\lambda_i \rho_i}{|\lambda_i \rho_i|}$, are of class  $C^{l+2,\alpha}$.
\end{definition}
Typical instances for the standard optimal transport problem to be regular are stated in terms of geometric properties on measures' support. This provides a sufficient condition for a pair of measures to be $k$-regular: if both measures have $C^k$ positive densities that are bounded away from zero and infinity, then the pair is $k$-regular if the supports of both measures are convex domains (see   \cite[Theorem 3.3]{philippis2013mongeampre}).  
More generally, Definition~\ref{regu} fits well within the regularity theory developed for Monge-Ampère-type equations. Indeed, geometric assumptions, such as the convexity of the support, which are invariant under pointwise multiplication by a positive function, directly provide $k$-regularity. 

We are in a position to obtain the main result, stating that unbalanced optimal transport inherits the regularity of standard optimal transport associated with the cost $c$.

In the following statement, $F_0,F_1$ and $c$ are, respectively, given entropy functions and cost. We consider the unbalanced optimal transport problem between given measures $\rho_0$ and $\rho_1$ as in \eqref{EqPrimalUOT}.
\begin{theorem}[Reduction to standard optimal transport]\label{ThReduction}
Assume that

(1) the Fenchel-Legendre transform of the entropy functions have domain $[0,+\infty)$, are $C^{k+1}$ on $(0,\infty)$ and $\partial \FF_i(0) = +\infty$, $i=0,1$; 

(2) the pair of measures $(\rho_0,\rho_1)$ is $k$-regular;

(3) the optimal potentials for unbalanced optimal transport $(z_0^\star,z_1^\star)$ are Lipschitz continuous.
Then, the optimal pair $(z_0^\star,z_1^\star)$ is of class $ C^{k+2,\alpha}(X)\times C^{k+2,\alpha}(Y)$.
\end{theorem}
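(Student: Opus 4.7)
The plan is to combine Lemma~\ref{ThLinearization} with a bootstrap argument. By Lemma~\ref{ThLinearization}, the optimal pair $(z_0^\star,z_1^\star)$ is the Kantorovich potential pair for a \emph{standard} OT problem with cost $c$ between the marginals $\tilde\rho_i=\lambda_i\rho_i$, where $\lambda_i=(F_i^*)'(-z_i^\star)$. The remark following Lemma~\ref{ThLinearization} guarantees $|\tilde\rho_0|=|\tilde\rho_1|$, and since standard OT potentials are unchanged (up to additive constants) under a common rescaling of the two marginals, we may normalize to unit mass without altering regularity. Thus, to invoke the definition of a $k$-regular pair for $(\rho_0,\rho_1)$ at level $l$, it suffices to verify that each $\lambda_i$ is positive, bounded away from $0$ and $\infty$, and belongs to $C^{l,\alpha}$.

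Positivity and two-sided boundedness of $\lambda_i$ follow from assumption (1) and assumption (3). Since $\on{dom}(F_i^*)=[0,+\infty)$ and $\partial F_i(0)=+\infty$, the Proposition preceding Remark~\ref{rem_FKL} yields $(F_i^*)'>0$ on $\on{dom}(F_i^*)$. Lipschitz continuity of $z_i^\star$ on the (compact) working domain makes $-z_i^\star$ take values in a compact subset of $[0,+\infty)$, so by continuity of $(F_i^*)'$ one obtains uniform bounds $0<c\le\lambda_i\le C<\infty$.

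The H\"older regularity of $\lambda_i$ is obtained by a step-by-step bootstrap. Lipschitz continuity first gives $z_i^\star\in C^{0,\alpha}$, so composition with the $C^{k+1}$ function $(F_i^*)'$ yields $\lambda_i\in C^{0,\alpha}$. Applying $k$-regularity at level $l=0$ promotes $z_i^\star$ to $C^{2,\alpha}$. Iteratively, if $z_i^\star\in C^{l,\alpha}$ with $0\le l\le k$, then $(F_i^*)'\in C^{l+1}$ (valid because $k+1\ge l+1$) combined with the standard composition rule gives $\lambda_i\in C^{l,\alpha}$, and $k$-regularity at level $l$ upgrades $z_i^\star$ to $C^{l+2,\alpha}$. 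Running $l=0,1,\dots,k$ yields $z_i^\star\in C^{k+2,\alpha}$. The main technical point is the chain-rule regularity of $(F_i^*)'(-z_i^\star)$ in H\"older spaces at each step, which is routine given the quantitative hypothesis $F_i^* \in C^{k+1}$; beyond this, the proof is a direct application of Lemma~\ref{ThLinearization} together with the $k$-regular hypothesis, so no further obstruction is expected.
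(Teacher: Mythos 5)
Your proof follows the same bootstrap strategy as the paper: apply Lemma~\ref{ThLinearization} to reduce to a standard OT problem between $\tilde\rho_i = (\FF_i^*)'(-z_i^\star)\rho_i$, then iteratively invoke the $k$-regularity hypothesis, with the composition rule in H\"older spaces driving the gain from $C^{l,\alpha}$ to $C^{l+2,\alpha}$. You are in fact more careful than the paper's own (very terse) argument: you explicitly handle the normalization to unit mass required by the definition of $k$-regular pair, and you verify positivity and two-sided boundedness of $\lambda_i$, both of which the paper glosses over.

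One small slip worth flagging: you write that $(\FF_i^*)'$ is $C^{k+1}$ and that $(\FF_i^*)'\in C^{l+1}$ ``because $k+1\ge l+1$.'' Since the hypothesis is $\FF_i^*\in C^{k+1}$, you only have $(\FF_i^*)'\in C^k$, so the condition should read $k\ge l+1$, i.e.\ $l\le k-1$. The final iterate $l=k$ therefore requires $(\FF_i^*)'\in C^{k+1}$, one more degree than the hypothesis grants; strictly this needs $\FF_i^*\in C^{k+2}$ (or $(\FF_i^*)^{(k+1)}$ H\"older) to land $\lambda_i$ in $C^{k,\alpha}$ rather than merely $C^k$. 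This imprecision is present in the paper's own proof as well (it asserts $\lambda_i\in C^l$ when what is needed is $C^{l,\alpha}$), so it is not a defect of your argument relative to the original, but it is worth being aware of if you were to tighten the statement.
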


Assumption (1) ensures that the resulting marginals are sufficiently smooth and with unchanged support, i.e., the multiplicative term ${\FF_i^*}'(-z_i^\star)$ does not vanish. 

Assumption (2) presumes that a theory of regularity for a class of optimal maps in the case of standard optimal transport is available. 
This is the case, for instance, under conditions on the Ma-Trudinger-Wang tensor, see e.g.\cite[Chapter 12]{villanioldnew}. 
Relations between the MTW tensor on the underlying space $X$ and the MTW tensor on the cone over $X$ are discussed in Section~\ref{SecCconvex} below. 

Assumption (3) is in general a consequence of Lipschitz continuity of the cost.
However, for unbounded costs, 
existence of Lipschitz potentials for UOT requires more assumptions, as is shown in the next section, where we study the case of Wasserstein-Fisher-Rao metric. (Here we focus on regularity).
 
 The main idea is to use Lemma~\ref{ThLinearization} to relate UOT to standard optimal transport and then start by Assumption (2) to apply a bootstrap argument. 

\begin{proof}
Since the optimal potentials are Lipschitz, Lemma \ref{ThLinearization} gives that these potentials are optimal for a standard optimal transport problem between a new pair of densities which inherits smoothness from the potentials and the initial densities, namely $\tilde \rho_i =  {\FF_i^*}'(-z_i^\star) \rho_i$. 
Hypothesis (1) gives that ${\FF_i^*}'(-z_i^\star)$ is $C^l$ if $z_i \in C^l$ for $l \leq k$. It implies that the regularity of $\tilde \rho_i$ is given by that of $z_i$.
At the initialization step of the bootstrap, they are only Lipschitz, then applying Lemma~\ref{ThLinearization} and Hypothesis (2), the optimal potentials gain in regularity to be $C^{3,1}$.
Then, in turn, we obtain that the marginals $\tilde \rho_i$ are $C^{\min(k,3), 1}$. Iterating this bootstrap argument gives the result, the optimal potentials are $C^{k+2,\alpha}$ and the optimal marginals $\tilde \rho_i$ are $C^{k,\alpha}$. 
\end{proof}

\subsection{Existence of Lipschitz potentials for unbounded costs}\label{SecExistenceLipschitz} 
A natural question about the range of applicability of Theorem~\ref{ThReduction} is to understand whether assumption (3)  is fulfilled by a nonempty class of problems.
In this section, we prove the existence of Lipschitz potentials for the maximization problem in \eqref{EqDualUOTProblem}, \eqref{EqInequalityConstraint} for unbounded costs under an admissibility assumption on the source and target measure.  
Such a condition may be interpreted by saying that pure creation/destruction of mass is forbidden or, in other words, mass transport must be performed between the source and target measure on the whole supports.

For simplicity, we consider the case where $M$ is either a compact Riemannian manifold or a convex and compact domain in Euclidean space.
We now define a class of functions that will be considered in this section as costs. 
In particular, such costs can be unbounded.

\begin{definition}\label{costi}
A function $c:M\times M\to\R\cup\{+\infty\}$ is said to be a {\it cost function} if it is bounded below,  it is continuous at every point in $c^{-1}(\R)$, and, for every $L \in \R$, the restriction of $c$ on the sub-level $c^{-1}((-\infty,L])$ is Lipschitz continuous.
\end{definition}
The Lipschitz constant on a sub-level may depend on the chosen $L$. 
\begin{definition}[Admissible measures]\label{defadm}
A pair of   measures $(\rho_0,\rho_1)\in{\mathcal M}_+(M)^2$ is {\it admissible} if, denoting $K_i = \spt\rho_i$, $K_i\neq\emptyset$ $i=0,1$, and there holds
\begin{equation*}
\max\left(\sup_{x \in K_0} \inf_{y \in K_1} c(x,y), \sup_{y \in K_1} \inf_{x \in K_0} c(x,y)\right) < \infty\,.
\end{equation*}
We denote this finite number by $c_H(\rho_0,\rho_1)$. 
\end{definition}
When considering the distance as a cost function, being admissible simply means that the supports of the source and target measure have finite  Hausdorff distance.

The main result of this section provides the existence of Lipschitz solutions to unbalanced optimal transport \eqref{EqDualUOTProblem} between admissible measures for costs functions as in Definition~\ref{costi}.
The admissibility assumption  is crucial in order to overcome the fact that the cost is not bounded above. 
Furthermore, in the framework of a possibly unbounded cost, we are lead to use a ``local'' notion of $c$-conjugate function instead of the usual one, where locality is related to the given pair of measures. 

\begin{proposition}\label{Pr:existence}
Assume $\FF_i$, $i=0,1$, are entropy functions such that $\FF^*_i$, $i=0,1$, is strictly increasing.
Let $(\rho_0,\rho_1)\in\mathcal{M}_+(M)^2$ be a pair of admissible measures. Then there exists an optimal pair 
$(z_0,z_1)\in C(M)^2$ for the maximization problem in \eqref{EqDualUOTProblem}. Moreover, $z_i$ is  Lipschitz continuous on $\spt\rho_i$, $i=0,1$  and 
\begin{eqnarray}
    \forall\, y\in \spt \rho_1,&& z_1(y)=\inf_{x\in\spt{\rho_0}}c(x,y)-z_0(x),\label{eqz1}\\
     \forall\, x\in \spt \rho_0,&&    z_0(x)=\inf_{y\in\spt{\rho_1}}c(x,y)-z_1(y)\label{eqz0}.
\end{eqnarray}
\end{proposition}

Let us first prove an auxiliary technical lemma. 
\begin{lemma}\label{ThLemma}
Let $(\rho_0,\rho_1)$ be an admissible pair of measures. Then, there exist $x_1, \ldots, x_k \in M$ and $r_1,\ldots,r_k >0$ such that $\rho_0(B(x_i,r_i)) > 0$ and for any $y \in \spt\rho_1$, there exists $\bar i\in\{1,\dots, k\}$ such that $\sup_{x \in B(x_{\bar i},r_{\bar i})} c(x,y) < c_H(\rho_0,\rho_1) + 1$.
\end{lemma}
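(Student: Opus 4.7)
The plan is to build the collection $(x_i, r_i)$ by associating, to each $y \in K_1$, a suitable point $x(y) \in \on{Supp}(\rho_0)$ together with a small bidisc around $(x(y),y)$ on which $c$ is controlled, and then to extract a finite subcover of $K_1$ by compactness. The role of admissibility is to provide the point $x(y)$; the role of the Lipschitz hypothesis on sublevels is to propagate the pointwise bound $c(x(y),y) \leq c_H(\rho_0,\rho_1)$ to a neighborhood.

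First I would fix $L := c_H(\rho_0,\rho_1) + 1$ and let $L_c$ denote the Lipschitz constant of the restriction $c|_{c^{-1}((-\infty,L])}$. For every $y \in K_1$, admissibility gives $\inf_{x \in K_0} c(x,y) \leq c_H(\rho_0,\rho_1)$, so I can choose $x(y) \in K_0$ with $c(x(y),y) < c_H(\rho_0,\rho_1) + 1/2$. Observing that the definition of cost function forces $c$ to be lower semi-continuous (the Lipschitz property on each sublevel is in particular a continuity statement at every finite point), the open set $c^{-1}((-\infty,L))$ contains $(x(y),y)$, so I can pick radii $r(y), s(y) > 0$ small enough that
\[
B(x(y),r(y)) \times B(y,s(y)) \subset c^{-1}((-\infty,L))
\quad\text{and}\quad
L_c\bigl(r(y)+s(y)\bigr) < 1/2.
\]
The Lipschitz estimate then yields, for every $(x,y') \in B(x(y),r(y)) \times B(y,s(y))$,
\[
c(x,y') \leq c(x(y),y) + L_c\bigl(r(y)+s(y)\bigr) < c_H(\rho_0,\rho_1) + 1.
\]

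Next, compactness of $K_1$ (a closed subset of the compact set $M$) lets me extract from the open cover $\{B(y,s(y))\}_{y \in K_1}$ a finite subcover $B(y_1,s(y_1)), \ldots, B(y_k,s(y_k))$. Setting $x_i := x(y_i)$ and $r_i := r(y_i)$, any $y \in K_1$ belongs to some $B(y_{\bar i}, s(y_{\bar i}))$, and the displayed bound specializes to $\sup_{x \in B(x_{\bar i},r_{\bar i})} c(x,y) < c_H(\rho_0,\rho_1) + 1$. Finally, because $x_i \in K_0 = \on{Supp}(\rho_0)$, every open ball centered at $x_i$ has positive $\rho_0$-mass, so $\rho_0(B(x_i,r_i)) > 0$, which concludes the argument.

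The main technical subtlety is to leverage the Lipschitz control, which is a priori stated only on the closed sublevel $c^{-1}((-\infty,L])$, on a full open bidisc around $(x(y),y)$. This is precisely why I separate $c(x(y),y) < c_H + 1/2$ from the threshold $L = c_H + 1$ and shrink $r(y),s(y)$ so that the bidisc lies strictly inside the open sublevel $c^{-1}((-\infty,L))$; without this strict separation, points near $(x(y),y)$ could fall outside the region where the Lipschitz bound applies.
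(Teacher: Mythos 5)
Your overall strategy---pick, for each $y\in K_1$, a point $x(y)\in K_0$ with $c(x(y),y)<c_H+1/2$ via admissibility, control $c$ on a small bidisc around $(x(y),y)$, and then extract a finite subcover of the compact set $K_1$---is exactly the strategy of the paper's proof, which simply asserts the existence of the bidiscs and then invokes compactness.

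The gap is in your justification for the bidisc. You claim that the Lipschitz-on-sublevels condition makes $c$ lower semi-continuous and that consequently $c^{-1}\bigl((-\infty,L)\bigr)$ is open. Two things go wrong. First, the semicontinuities are swapped: lower semi-continuity makes $c^{-1}\bigl((-\infty,L]\bigr)$ \emph{closed}; it is \emph{upper} semi-continuity that makes $c^{-1}\bigl((-\infty,L)\bigr)$ open, and upper semi-continuity is what the argument actually needs in order to shrink into a bidisc. Second, upper semi-continuity at finite points is not implied by the stated definition of a cost function. The Lipschitz bound on $c^{-1}\bigl((-\infty,L]\bigr)$ controls pairs of points that are \emph{both} in the sublevel set; it says nothing about nearby points where $c$ exceeds $L$, so one can have $c(p)<L$ with $c\to+\infty$ along a sequence converging to $p$. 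For instance, on $[-1,1]$ take $c(t)=1/t$ for $t>0$ and $c(t)=0$ for $t\le 0$: this is bounded below, Lipschitz on every sublevel $\{c\le L\}=(-\infty,0]\cup[1/L,1]$, yet $\{c<1\}$ is not a neighbourhood of $0$. Indeed what the Lipschitz-on-sublevels hypothesis does give for free is lower semi-continuity at finite points (take a larger sublevel containing both $p$ and the tail $p_n$), which is precisely the wrong direction. To close the gap you must either add an upper semi-continuity (or continuity-on-the-domain) hypothesis on $c$, or restrict to costs for which it holds; both $c=\tfrac12 d^2$ and $c=-\log\cos^2(d\wedge\tfrac{\pi}{2})$ are continuous on $\{c<\infty\}$ with $\{c<\infty\}$ open, so the argument is sound there, but the abstract assertion as written does not follow. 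Note that the paper's own proof skips this justification entirely, so you were right to sense there was something to check---you simply invoked the wrong semicontinuity and overstated what the definition gives.
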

\begin{proof} Set $K_i=\spt\rho_i$, $i=0,1$.
Since the pair $(\rho_0,\rho_1)$ is admissible, for every $y \in K_1$, there exists $B(x_y,r_y)$ and $B(y,\delta_y)$ small enough such that  $\sup_{x_1 \in B(x_y,r_y),y_1\in B(y,\delta_y)}c(x_1,y_1) < c_H(\rho_0,\rho_1) +1$ and $\rho_0(B(x_y,r_y)) >0$. As $K_1$ is compact, there exists a finite number of points $(y_i)_{i=1,\ldots,k}$ such that $K_1\subset\cup_{i=1}^k B(x_y,r_y)$.
Therefore with $x_i=x_{y_i}$ and $r_i=r_{y_i}$, for $i=1,\ldots,k$, the announced result is satisfied.
\end{proof}

\begin{proof}[Proof of Proposition~\ref{Pr:existence}] 
Denote by $\mathcal{T}(z_0,z_1)$ the functional to maximise in the dual formulation \eqref{EqDualUOTProblem}.
 Since $x\mapsto \FF_i(x)$ is bounded below, $\FF^*_i(0)$ is finite, $i=0,1$. Hence, $\mathcal{T}(0,0)=-\FF^*_0(0)\rho_0(M)-
 \FF^*_1(0)\rho_1(M)$ is finite and  the supremum in \eqref{EqDualUOTProblem} is bounded below by $\mathcal{T}(0,0)$. Moreover, since $\FF^*_1$ is non decreasing, taking the local $c$-conjugate of $z_0\in C(M)$ improves the value of $\mathcal{T}$, i.e., $\mathcal{T}(z_0,\hat z_0)\geq \mathcal{T}(z_0,z_1) $ where 
  \begin{equation*}
\hat{z_0}(y) = \inf_{x\in\spt\rho_0} c(x,y) - z_0(x)\,.
\end{equation*}
Again, given a function $z_1$, taking the local $c$-conjugate of $z_1$ improves the value of $\mathcal{T}$, i.e.,
 $\mathcal{T}(\hat z_1,z_1)\geq \mathcal{T}(z_0,z_1)$,
 where
  \begin{equation*}
\hat{z_1}(x) = \inf_{y\in\spt\rho_1} c(x,y) - z_1(y)\,.
\end{equation*}
 Iterating this alternate optimization enables to restrict the optimization set to pairs of potentials that satisfy $z_1 = \hat{z}_0$ and $z_0 = \hat z_1$ (indeed, the local $c$-conjugate is an involution on its range).
 The value of $\mathcal{T}(z_0,z_1)$ does not depend on the behaviour of $z_i$ on $M\setminus\spt\rho_i$, therefore in the sequel we consider (any)  continuous extension of a function, defined only on the closed set $\spt\rho_i$, to the whole manifold. 
We  prove that the set 
\begin{equation*}
\mathcal{E}=\{(z_0,z_1)\in C(M)^2\mid \eqref{EqInequalityConstraint} \textrm{ is satisfied}, ~~\mathcal{T}(z_0,z_1)\geq\mathcal{T}(0,0) \text{ and }z_1 = \hat{z}_0,\,z_0 = \hat z_1\}
\end{equation*}
is equibounded and equi-Lipschitz, i.e., there exist  constants $A, B$ and $L>0$ such that for every pair $(z_0, z_1)\in\mathcal{E}$, and $i=0,1$, $B\leq z_i|_{\spt \rho_i}\leq A$, and $z_i|_{\spt \rho_i}$   is  $L$-Lipschitz continuous. 

Trivially, $\mathcal{E}$ is not empty, since it contains $(h,\hat h)$, $h(x)\equiv 0$. Let us start by equiboundedness of $\mathcal{E}$. We first demonstrate that there exist an upper bound for $\hat z_0$ on $\spt \rho_1$, that is uniform for every $(z_0,\hat z_0)\in\mathcal{E}$.
To this aim, consider $B(x_i,r_i)$ for $i = 1,\ldots,k$ given by Lemma \ref{ThLemma} for the measure $\rho_0$ such that 
\begin{equation*}
\sup_{y \in \spt\rho_1} \min_{i = 1,\ldots,k} c(x_i,y) \leq c_H(\rho_0,\rho_1) + 1\,.
\end{equation*}

Since $F_0^*(x) \geq \langle x,0 \rangle - F_0(0) = -F_0(0)$, for every $i \in 1,\ldots,k$, there holds
\begin{align*}
\mathcal{T}(0,0) \leq \mathcal{T}(z_0,\hat z_0) \leq  - \int_{B(x_i,r_i)}\FF_0^*(-z_0(x))\,d\rho_0(x) + \FF_0(0)\rho_0(M)   + \FF_1(0)\rho_1(M) . 
\end{align*}
Let $\tilde{z}_i = \sup\left\{z_0(x)\mid x \in B(x_i,r_i)\cap\spt\rho_0\right\}$. 
Then, for every $i$,
\begin{align}\label{equa}
\mathcal{T}(0,0)-\FF_0(0)\rho_0(M)   - \FF_1(0)\rho_1(M) \leq  - \FF_0^*(-\tilde z_i)\rho_0(B(x_i,r_i)).
\end{align}
Since $F_0^*$ is increasing, it is invertible and, for a given index $i \in\{ 1,\ldots,k\}$, there are two possibilities: either $\tilde z_i\geq -(F_0^*)^{-1}(0)$ or $\tilde z_i< -(F_0^*)^{-1}(0)$. 
Define
\begin{equation*}
    \kappa\eqdef\min\left\{-(F_0^*)^{-1}(0),\frac{\mathcal{T}(0,0)-\FF_0(0)\rho_0(M)   - \FF_1(0)\rho_1(M)}{\bar \delta} \right\},
\end{equation*}
where $\bar\delta=\min\{\rho_0(B(x_i,r_i), i=1,\dots k\}$.
In the first case,  $\tilde z_i\geq-(F_0^*)^{-1}(0)$, hence $\tilde z_i\geq\kappa$. In the second case, there holds $F_0^*(-\tilde z_i)\geq 0$ and equation \eqref{equa} implies
\begin{align*}
    \frac{\mathcal{T}(0,0)-\FF_0(0)\rho_0(M)   - \FF_1(0)\rho_1(M)}{\bar\delta} \leq  - \FF_0^*(-\tilde z_i).
\end{align*}
Since $\FF_0^*(x) \geq \langle x,1\rangle - F_0(1) = x$, we deduce that 
\begin{align*}
    \tilde z_i\geq \frac{\mathcal{T}(0,0)-\FF_0(0)\rho_0(M)   - \FF_1(0)\rho_1(M)}{\bar\delta}\geq\kappa.
\end{align*}
 In the end, $\tilde z_i$ is bounded below by the  constant $\kappa$ which depends only on $\mathcal{T}(0,0), \FF_0,\FF_1,\rho_0,\rho_1,\bar \delta$.

For every $y\in \spt\rho_1$, let $(x_{\bar i},r_{\bar i})$ given by Lemma \ref{ThLemma} such that $\sup_{x \in B(x_{\bar i},r_{\bar i})} c(x,y) \leq c_H(\rho_0,\rho_1) + 1$. 
Then
\begin{align*}
\hat z_0(y) &= \inf_{x\in \spt\rho_0} c(x,y) - z_0(x)\\
&\leq \inf_{x\in  B(x_{\bar i},r_{\bar i})\cap 
\spt\rho_0} c(x,y) - z_0(x)\leq c_H(\rho_0,\rho_1) + 1 + \inf_{B(x_{\bar i},r_{\bar i})\cap 
\spt\rho_0}- z_0(x)\,\\
&= 1+c_H(\rho_0,\rho_1) - \sup_{ B(x_{\bar i},r_{\bar i})\cap 
\spt\rho_0} z_0(x)= 1+c_H(\rho_0,\rho_1)-\tilde z_{\bar i}\\
&\leq  1+c_H(\rho_0,\rho_1)-\kappa.
\end{align*}
 Hence $\hat z_0$ is bounded above on $\spt \rho_1$ by $1+c_H(\rho_0,\rho_1)-\kappa$.
As a direct consequence,  $z_0$ is bounded below on $\spt\rho_0$ by $c_I-1-c_H(\rho_0,\rho_1)+\kappa$, where $c_I$ is the infimum of the cost function $c$.
By symmetry of the hypothesis on $\rho_0,\rho_1$, we obtain that there exists $A, B$, depending only on $\rho_0,\rho_1$, $\FF_0^*,\FF_1^*$ and $c_H(\rho_0,\rho_1), c_I$ such that $B\leq z_0|_{\spt\rho_0} \leq A$ and $B\leq \hat z_0|_{\spt\rho_1} \leq A$, for every $(z_0,\hat z_0)\in \mathcal {E}$.

We now prove that there exists a uniform constant $L$ such that for every pair $(z_0,z_1)\in\mathcal{E}$, $z_i|_{\spt\rho_i}$ is  Lipschitz continuous with constant $L$. Let $(z_0,z_1)\in\mathcal{E}$. By definition of $\mathcal{E}$, $z_1=\hat z_0$. Let $y\in \spt\rho_1$ and let $\bar x\in\spt\rho_0$ be such that 
\begin{equation}\label{equno}
z_1(y) = c(\bar x,y) - z_0(\bar x).
\end{equation}
Since $z_i$ is bounded by $A$ on $\spt\rho_i$, there holds  $c(\bar x,y)\leq 2 A$. Hence, by continuity, there exists $\bar \rho>0$ such that $\left\{t\in \spt\rho_1\mid d(y,t)<\bar \rho\right\}\subset c^{-1}(2A)$. Let $\tilde L>0$ be the Lipschitz constant of $c$ on $c^{-1}(2A)$. Then, for every $t\in\spt\rho_1$
\begin{equation}\label{eqdue}
\hat{z}_0(t) \leq c(\bar x, t) - z_0(\bar x)\,.
\end{equation}
If moreover, $d(y,t)\leq\bar\rho$, then subtracting \eqref{equno} from \eqref{eqdue} gives
\begin{equation*}
\hat{z}_0(y) - \hat{z}_0(t) \leq c(\bar x,y) - c(\bar x,t)\leq \tilde L d(y,t)\,.
\end{equation*}
By compactness, we infer that there exists a constant $L>0$, depending only on $\tilde L$ and not on $z_1$, such that $z_1|_{\spt\rho_1}$ is $L$-Lipschitz. 
By a symmetric argument, $z_0|_{\spt\rho_0}$ is $L$-Lipschitz.
Therefore $\mathcal{E}$ is not empty, equibounded and equi-Lipschitz. As a consequence, the existence of an optimal pair $(z_0,z_1)$ for \eqref{EqDualUOTProblem} with the required properties is obtained with a standard argument based on  Ascoli--Arzel\`a theorem for compactness and dominated convergence theorem for the convergence of the functional $\mathcal{T}$.
\end{proof}

As concerns uniqueness, an obvious sufficient condition is given by the following statement.  
\begin{proposition}\label{ThUniquenessofPotentials}
If $\FF_0^*$ and $\FF_1^*$ are strictly convex, the optimal pair $(z_0,z_1)$ is unique $\rho_0$ and $\rho_1$ a.e.
\end{proposition}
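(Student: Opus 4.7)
The plan is a standard convexity argument. First I would observe that the admissible set
\[
\mathcal{A} = \{(z_0,z_1)\in C_b(X)\times C_b(Y) \mid z_0(x)+z_1(y)\leq c(x,y)\ \forall (x,y)\}
\]
is convex: any convex combination of two admissible pairs still satisfies \eqref{EqInequalityConstraint}. Moreover, since $\FF_0^*$ and $\FF_1^*$ are convex and $z\mapsto -z$ is affine, the integrands $-\FF_i^*(-z_i(\cdot))$ are concave in $z_i$, and therefore $\mathcal{S}$ is concave on $\mathcal{A}$.

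Now suppose $(z_0,z_1)$ and $(z_0',z_1')$ are two optimal pairs, and set $\bar z_i = \tfrac12(z_i+z_i')$. By convexity of $\mathcal{A}$, the pair $(\bar z_0,\bar z_1)$ is admissible, and by pointwise convexity of $\FF_i^*$,
\[
\FF_i^*\!\Bigl(-\tfrac{z_i(x)+z_i'(x)}{2}\Bigr) \leq \tfrac{1}{2}\FF_i^*(-z_i(x)) + \tfrac{1}{2}\FF_i^*(-z_i'(x))
\]
for every $x$. Integrating against $\rho_i$ and summing $i=0,1$ yields $\mathcal{S}(\bar z_0,\bar z_1) \geq \tfrac12 \mathcal{S}(z_0,z_1) + \tfrac12 \mathcal{S}(z_0',z_1')$. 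Since the right-hand side equals the optimal value, both sides coincide, forcing the pointwise inequalities above to be equalities $\rho_i$-a.e.\ (the non-negative pointwise gap must vanish after integration).

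Finally, strict convexity of $\FF_i^*$ forces $z_i = z_i'$ $\rho_i$-a.e., for $i=0,1$. This is the whole argument; the only mild subtlety is the transition from equality of integrals to pointwise equality $\rho_i$-a.e., which is automatic since the relevant pointwise gap $\tfrac12 \FF_i^*(-z_i) + \tfrac12 \FF_i^*(-z_i') - \FF_i^*(-\bar z_i)$ is non-negative.
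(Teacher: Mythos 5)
Your proof is correct and is just a fully spelled-out version of the paper's one-line argument (``the maximization problem is strictly convex''): convexity of the constraint set plus strict concavity of $\mathcal{S}$ via strict convexity of $\FF_i^*$. Same approach, merely more explicit about why equality of the integrals forces pointwise equality $\rho_i$-a.e.
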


\begin{proof}
The maximization problem \eqref{EqDualUOTProblem} is strictly convex.
\end{proof}

Collecting the previous results leads to the existence and uniqueness of optimal Lipschitz potentials for \eqref{EqPrimalUOT} for some relevant costs in the literature. 
For instance, in usual applications outside mathematics, the Euclidean squared distance is often used. From the mathematical point of view, the case of 
\begin{equation*}
c(x,y)=-\log\left(\cos^2\left(d(x,y) \wedge \frac \pi 2\right)\right)
\end{equation*}
stands out since it appears in the static formulation of the Wasserstein-Fisher-Rao metric. Importantly, this cost is unbounded as well as its gradients, since it blows up when $d(x,y)$  is close to $\pi/2$.  
\begin{corollary}\label{Co:exilip}
Let $\FF_0(x) = \FF_1(x) = x\log(x) - x + 1$ and 
\begin{equation}\label{eq:costs}c(x,y) = \frac 12 d(x,y)^2, \textrm{ or }c(x,y)=-\log\left(\cos^2\left(d(x,y)\wedge {\delta \pi / 2}\right)\right)
\end{equation} for some $\delta >0$. Then, for every pair of admissible measures,  there exists a unique pair of Lipschitz continuous optimal potentials for the dual formulation \eqref{EqDualUOTProblem}.
\end{corollary}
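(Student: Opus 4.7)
The plan is to check, one by one, the hypotheses of Proposition~\ref{Pr:existence} (for existence of a Lipschitz optimal pair) and of the preceding strict-convexity proposition (for uniqueness), in the specific setting of $\FF_0 = \FF_1 = x\log x - x + 1$ and the two costs in \eqref{eq:costs}.

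First, I would verify that $\FF(x) = x\log x - x + 1$ is an entropy function with $\FF(0) < +\infty$: convexity and nonnegativity follow from $\FF''(x) = 1/x > 0$ on $(0,\infty)$ and $\FF(1) = 0$, while lower semi-continuity at the origin uses $\lim_{x \to 0^+} x \log x = 0$, giving $\FF(0) = 1$; the convention $\FF(x) = +\infty$ for $x<0$ takes care of the last axiom. The Legendre--Fenchel transform is $\FF^*(y) = e^y - 1$, which is $C^\infty$ and strictly convex on $\R$, and in particular satisfies the hypotheses of the uniqueness proposition.

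Second, I would check that both costs fit the cost function definition, i.e.\ they are bounded below and Lipschitz on every sublevel $c^{-1}((-\infty,L])$. For $c_1(x,y) = \tfrac{1}{2} d(x,y)^2$ nonnegativity is immediate and, on the compact $M$, $|\nabla_x \tfrac{1}{2}d(\cdot,y)^2| \leq d(x,y)$ (a.e., away from the cut locus) gives a global Lipschitz constant bounded by the diameter. For the log-cos cost, nonnegativity follows from $\cos^2 \leq 1$, and the constraint $c(x,y)\leq L$ forces $\cos^2(d\wedge\delta\pi/2) \geq e^{-L}$; thus the argument of the cosine is uniformly bounded away from $\pi/2$ by $\arccos(e^{-L/2})$, and a chain rule computation yields a bounded gradient on the sublevel, hence a Lipschitz constant depending only on $L$ and $\delta$.

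Third, since $(\rho_0,\rho_1)$ is admissible by assumption and $\FF(0) < +\infty$, Proposition~\ref{Pr:existence} applies and produces an optimal pair $(z_0,z_1) \in C(M)^2$ of potentials that are locally Lipschitz on the respective supports. Uniqueness $\rho_i$-a.e.\ then follows from the preceding proposition since $\FF^* = e^{\cdot} - 1$ is strictly convex. The main, and only real, obstacle is the Lipschitz check for the log-cos cost: one has to ensure that the cut-off at $\delta\pi/2$ combined with restriction to a sublevel keeps $\cos^2(d\wedge\delta\pi/2)$ uniformly away from $0$, so that the singularity at $d = \pi/2$ does not pollute the gradient bound. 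Once this quantitative distance from the singularity is in hand, the remaining steps are bookkeeping.
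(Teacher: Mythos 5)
Your proposal is correct and follows exactly the route the paper intends: Corollary~\ref{Co:exilip} is stated immediately after the sentence ``Collecting the previous results leads to existence and uniqueness of optimal Lipschitz potentials'' and receives no separate proof, the intended argument being precisely the instantiation of Proposition~\ref{Pr:existence} (for existence, once one checks $\FF(0)<\infty$ and that each cost is bounded below and Lipschitz on sublevels) together with the strict-convexity uniqueness proposition (since $\FF^*(y)=e^y-1$). Your only minor imprecision is the claim that on a sublevel the argument of the cosine is bounded by $\arccos(e^{-L/2})$, which is not literally true when $\delta>1$ since the truncated distance can exceed $\pi/2$; what one really needs, and what you correctly identify as the essential point, is that $\cos^2(d\wedge\delta\pi/2)$ stays bounded away from zero on the sublevel, which suffices to bound $|\partial_\theta(-\log\cos^2\theta)|=|2\tan\theta|$ there.
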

Note that any pair of measures is admissible for the quadratic cost.

By Corollary~\ref{Co:exilip}, assumption~(3) in Theorem~\ref{ThReduction} is fulfilled and for the chosen entropy function, assumption~(1) is satisfied. Therefore, in the setting of Corollary~\ref{Co:exilip}, the regularity of unbalanced optimal transport is reduced to the regularity of the standard optimal transport and it can be inferred in different ways depending on the choice of the ambient space $M$. 
When $M = \R^d$, the quadratic cost supports regularity theorems for optimal transport. 
For the second cost in \eqref{eq:costs}, regularity results also hold for $M = S^d$ the unit sphere of dimension $d$ and for the sphere of radius $1/2$. Providing sufficient conditions of regularity of unbalanced optimal transport problems associated with costs in \eqref{eq:costs} is the object of the next section.

\subsection{Sufficient conditions for regularity of unbalanced optimal transport for two important costs.}\label{sec:leeli}

We focus on the two costs in \eqref{eq:costs}, which are of relevance in unbalanced optimal transport.
The first one is the most commonly used in practical applications, the Euclidean squared cost. The second one arises naturally from the dynamic formulation which was originally proposed to introduce this model.
In \cite{LMS}, the distances associated with those costs are named after  Gaussian-Hellinger for the quadratic case,  and Hellinger-Kantorovich for the other cost. The latter is also known as Wasserstein-Fisher-Rao distance (see for instance \cite{GeneralizedOT1,GeneralizedOT2}).

\smallskip

\noindent
\textbf{Gaussian-Hellinger distance: Euclidean space and spheres.}
Regularity in these two cases is an immediate consequence of Theorem~\ref{ThReduction}  and the regularity of optimal transport, for which sufficient conditions ensuring assumption~(2) in Theorem~\ref{ThReduction} are well-known. We simply detail the case of the Euclidean space, for which the following statement holds true, as a consequence of \cite[Theorem 3.3]{philippis2013mongeampre}. 
\begin{corollary}
Let $X, Y$ be convex sets in $\R^d$ and let $(\mu,\nu)\in\mathcal{M}_+(X)\times\mathcal{M}_+(Y)$ be a pair of measures which are absolutely continuous with respect to the Lebesgue measure, with densities $(f,g)$ bounded away from zero and infinity. 
Assume the entropy functions $\FF_0,\FF_1$ have strictly convex and differentiable Fenchel-Legendre transforms with infinite slope at $0$.

If $(f,g) \in C^{k,\alpha}(\overline{X})\times C^{k,\alpha}(\overline{Y})$ for some positive integer $k$ and $\alpha \in (0,1)$,  then, the pair of optimal potentials $(z_0,z_1)$ in the dual formulation \eqref{EqDualUOTProblem} for the quadratic cost $\frac 12 \| x - y\|^2$ belongs to  $C^{k+2,\alpha}(X)\times C^{k+2,\alpha}(Y)$ and $\nabla z_0$ is a $C^{k+1,\alpha}$-diffeomorphism between $\overline{X}$ and $\overline{Y}$.
\end{corollary}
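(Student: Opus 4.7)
The plan is to assemble this corollary as a direct consequence of Theorem~\ref{ThReduction}, Corollary~\ref{Co:exilip}, and the classical Caffarelli-type regularity for Monge--Ampère on convex domains cited as \cite[Theorem 3.3]{philippis2013mongeampre}. The three hypotheses of Theorem~\ref{ThReduction} must be verified in turn, and the diffeomorphism claim then follows from the same Monge--Ampère theory applied to the linearised optimal transport problem of Lemma~\ref{ThLinearization}.

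First I would check hypothesis (3) of Theorem~\ref{ThReduction}: since any pair of positive finite Radon measures is admissible for the quadratic cost (as already noted immediately after Corollary~\ref{Co:exilip}), Corollary~\ref{Co:exilip} supplies a unique pair $(z_0^\star, z_1^\star)$ of Lipschitz optimal potentials. Hypothesis (1) is built into the statement: the stated conditions on $\FF_i^*$ (differentiable, strictly convex, infinite slope of $\FF_i$ at $0$, together with the implicit smoothness needed for the bootstrap) fit the framework of Theorem~\ref{ThReduction}. The key step is verifying hypothesis (2), namely that $(\mu,\nu)$ is a $k$-regular pair. For any $l \leq k$ and any positive $C^{l,\alpha}$ factors $(\lambda_0,\lambda_1)$ bounded away from $0$ and $\infty$, the new densities $\lambda_i f / |\lambda_i \mu|$ and $\lambda_i g / |\lambda_i \nu|$ are again $C^{l,\alpha}$, bounded away from $0$ and $\infty$, and supported on the same convex sets $\overline{X}, \overline{Y}$. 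De Philippis--Figalli's theorem then yields optimal potentials in $C^{l+2,\alpha}$, which is exactly the $k$-regularity condition.

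With the three hypotheses in place, Theorem~\ref{ThReduction} delivers $(z_0^\star, z_1^\star) \in C^{k+2,\alpha}(X)\times C^{k+2,\alpha}(Y)$. For the final diffeomorphism claim, I would use Lemma~\ref{ThLinearization}: the potential $z_0^\star$ is optimal for the standard quadratic transport problem between $\tilde\mu = (\FF_0^*)'(-z_0^\star)\,\mu$ and $\tilde\nu = (\FF_1^*)'(-z_1^\star)\,\nu$. By the infinite-slope condition in hypothesis (1), $(\FF_i^*)'$ takes values in $(0,\infty)$, so $\tilde\mu, \tilde\nu$ remain bounded away from $0$ and $\infty$, with the same convex supports $\overline X, \overline Y$. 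Brenier's theorem identifies the unique transport map as $\nabla(\tfrac12|x|^2 - z_0^\star) = x - \nabla z_0^\star(x)$ (modulo the $\tfrac12|x-y|^2$ versus $-x\cdot y$ convention); after the standard convex-potential change of variables, \cite[Theorem 3.3]{philippis2013mongeampre} upgrades this map to a $C^{k+1,\alpha}$-diffeomorphism between $\overline X$ and $\overline Y$, which is the stated conclusion.

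The main obstacle is the verification of the $k$-regularity condition: one must check that the geometric hypotheses (convex support, densities bounded away from $0$ and $\infty$) are stable under multiplication by arbitrary positive $C^{l,\alpha}$ functions and under normalisation, so that the cited Monge--Ampère regularity result genuinely applies to every intermediate pair arising in the bootstrap. Once this stability is recorded, the rest is an application of the already-proved machinery.
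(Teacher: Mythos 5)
Your plan — verify the three hypotheses of Theorem~\ref{ThReduction} and then apply it, with the diffeomorphism claim coming from the linearised problem in Lemma~\ref{ThLinearization} and the Caffarelli/De~Philippis--Figalli theory — matches the paper's intent, and your verification of hypothesis~(2) (stability of the convex-support, bounded-above-and-below setting under positive $C^{l,\alpha}$ multiplication and normalisation) is exactly what the paper has in mind. The diffeomorphism step is also right, modulo the convention that for $c(x,y)=\tfrac12\|x-y\|^2$ the optimal map is $x\mapsto x-\nabla z_0(x)$ and the paper's ``$\nabla z_0$ is a diffeomorphism'' must be read up to this affine shift.

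There is, however, a concrete mismatch in how you verify hypothesis~(3). You invoke Corollary~\ref{Co:exilip} to produce the Lipschitz potentials, but that corollary is stated \emph{only} for the Kullback--Leibler entropy $\FF_0=\FF_1=x\log x - x + 1$, whereas the present statement allows any $\FF_i$ with strictly convex, differentiable $\FF_i^*$ and $\partial\FF_i(0)=+\infty$. One cannot retreat to Proposition~\ref{Pr:existence} either: it requires $\FF_i(0)<\infty$, which is used to bound the dual functional from below, and the corollary's hypotheses do not imply it (e.g.\ $\FF(x)=x-1-\log x$ has $\partial\FF(0)=+\infty$, $\FF^*(y)=-\log(1-y)$ strictly convex and differentiable, yet $\FF(0)=+\infty$). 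What the paper actually leans on is the remark just before Section~\ref{SecExistenceLipschitz}: on the bounded convex sets $\overline X,\overline Y$ the quadratic cost is globally Lipschitz, so the optimal dual pair can be replaced by a $c$-conjugate pair, and $c$-conjugates of a Lipschitz cost are automatically Lipschitz, \emph{for any} entropy functions. This simpler observation is what ``sufficient conditions ensuring assumption~(3)\ldots are well-known'' refers to, and it bypasses the admissibility/$\FF(0)<\infty$ machinery of Section~\ref{SecExistenceLipschitz}, which was designed for the unbounded WFR cost. Replacing your appeal to Corollary~\ref{Co:exilip} with this bounded-cost argument closes the gap; the rest of your proposal stands.
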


\smallskip

\noindent \textbf{Wasserstein-Fisher-Rao distance.}
We consider the case of a $d$-dimensional Riemannian manifold $M$ having constant sectional curvature, i.e., $M$ may be the Euclidean space, a  $d$-sphere, or the hyperbolic space and 
\begin{equation}\label{costo}c(x,y)=-\log\left(\cos^2\left(d(x,y)\wedge \frac{\pi}{2}\right)\right)\,.
\end{equation} 
The purpose of this section is to provide sufficient conditions to ensure assumption~(2) in Theorem~\ref{ThReduction} based on the study of the so-called Ma-Trudinger-Wang tensor for the cost \eqref{costo} on such manifolds. 
Indeed, the relation between such tensor and smoothness of optimal transport maps has been completely understood in the work \cite{MTWregularity}. We also refer the reader to \cite[Chapter 12]{villanioldnew} and references therein for a comprehensive treatment of the subject.
Roughly speaking, besides conditions on densities and supports of source and target measures, one is led to check whether the MTW tensor is nonnegative.  In particular: MTW weak condition states that MTW tensor must be nonnegative for every pair of points and every pair of $c$-orthogonal vectors; MTW strong condition states that MTW weak condition holds true and the tensor vanishes only at vanishing vectors. 
The following result provides instances of Riemannian manifolds where MTW strong or weak conditions hold.

\begin{proposition}\label{contazzi}
Let $M$ be a Riemannian manifold with constant sectional curvature and let $c:M \times M\to \R\cup\{+\infty\}$ be as in \eqref{costo}.

Then 
\begin{itemize}
    \item[(i)] MTW weak condition for $c$ fails if $M$ is either the Euclidean space $\R^d$, either the hyperbolic space $\mathbb{H}^d$ or the $d$-sphere of radius $R> 1$ with the induced metric;
    \item[(ii)] MTW weak condition holds for $c$ if $M$ is the $d$-sphere of radius $1$ with the induced metric;
    \item[(iii)] MTW strong condition holds for $c$ if $M$ is the   $d$-sphere of radius $R=1/2$    with the induced metric.
\end{itemize}
\end{proposition}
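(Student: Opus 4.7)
The plan is to apply the characterization of the Ma--Trudinger--Wang condition given in \cite[Theorem~5.3]{LeeLi} for costs of the form $l\circ d$ on a manifold of constant sectional curvature $\kappa$. Writing $l(r) = -2\log\cos r$ on $[0,\pi/2)$ and extending evenly, one has $l'(r) = 2\tan r$ and $l''(r) = 2/\cos^2 r$, so $l$ is smooth with invertible derivative on $(0,\pi/2)$, and the hypotheses of loc.\ cit.\ are met. The Lee--Li theorem then reduces each MTW assertion to the sign of two explicit scalar functions of the distance $r$ (corresponding to the ``orthogonal'' and the ``non-orthogonal'' parts of the MTW quadratic form), built from $l, l', l'', l'''$ and from the Jacobi factors $\cos(\sqrt{\kappa}\,r)$, $\sin(\sqrt{\kappa}\,r)/\sqrt{\kappa}$, with their hyperbolic analogues when $\kappa<0$.

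For statement (i), I would exhibit an $r \in (0,\pi/2)$ and a pair of $c$-orthogonal tangent vectors for which the MTW quadratic form is strictly negative. In the Euclidean case $\kappa=0$, the curvature-dependent terms in the Lee--Li formula vanish, and what remains contains a term of definite negative sign on the whole of $(0,\pi/2)$; the hyperbolic case is analogous, the negative-curvature correction only reinforcing the failure. For the sphere of radius $R>1$, i.e.\ $\kappa = 1/R^{2} < 1$, I would track the leading order as $r \to (\pi/2)^{-}$: a term proportional to $(\kappa-1)/\cos^{2}r$ tends to $-\infty$, producing an explicit failure at distances close to $\pi/2$.

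For (ii) and (iii), I would substitute $\kappa = 1$ and $\kappa = 4$ respectively into the Lee--Li expressions. The identities $1+\tan^{2}r = 1/\cos^{2}r$ and $\sin(2r) = 2\sin r\cos r$ produce decisive cancellations: on the unit sphere the formula collapses to a sum of manifestly nonnegative terms, with equality forced only when the pair of $c$-orthogonal vectors is non-generic (yielding MTW weak but not strong), whereas for the sphere of radius $1/2$ the extra factor of $4$ promotes these terms to a strictly positive combination unless the test vectors vanish (MTW strong).

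The main obstacle is the algebraic reduction once the Lee--Li formulas are expanded. These involve polynomial combinations of $\sin r, \cos r$ together with $l'(r), l''(r), l'''(r)$, and the computation must be carried without sign errors. The crucial simplification, specific to $l(r) = -2\log\cos r$, is that all derivatives of $l$ are rational in $\sin r, \cos r$, so that after substitution everything reduces to polynomial inequalities in these two variables, in which the threshold values $\kappa = 1$ and $\kappa = 4$ appear naturally as the critical balances between the curvature correction and the intrinsic convexity of the logarithmic-cosine cost.
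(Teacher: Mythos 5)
Your plan is the same as the paper's: apply the Lee--Li reduction (\cite[Theorem~5.3, Proposition~5.1]{LeeLi}) for costs of the form $l\circ d$ on a space form, writing $l(r)=-2\log\cos r$, compute the four explicit coefficient functions $\alpha,\beta,\gamma,\delta$ in terms of $A(s)=1/(l')^{-1}(s)$ and the curvature-dependent $B(s)$, and read off the sign conditions. That is precisely the route taken in the paper, so at the structural level the proposal is on target.

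A few of your anticipated details, however, do not match what the computation actually gives, and one is worth flagging so you are not misled if you carry it through. For the unit sphere, you expect the Lee--Li expression to collapse to ``a sum of manifestly nonnegative terms, with equality only on non-generic $c$-orthogonal vectors.'' In fact the computation gives $\alpha\equiv\beta\equiv\gamma\equiv\delta\equiv 0$, so the MTW tensor vanishes \emph{identically} on $c$-orthogonal pairs, not merely on a thin set; the conclusion (weak but not strong) is the same, but your picture of ``strictly positive generically'' is wrong. Relatedly, for the failure cases (Euclidean, hyperbolic, sphere of radius $R>1$) you propose to look at the blow-up as $r\to(\pi/2)^-$, i.e.\ $s\to\infty$ in the Lee--Li parameter; the paper instead evaluates the coefficients at $s=0$, where for the sphere of radius $R$ one finds $\beta(0)=\gamma(0)=\delta(0)=\tfrac13(1-1/R^2)$, which is positive precisely when $R>1$, and $\beta(0)>0$ for the Euclidean and hyperbolic cases as well. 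This endpoint check is both simpler and cleaner than tracking a divergence, and it is what pins down the sharp threshold $R=1$; your heuristic $(\kappa-1)/\cos^2 r$ leading-order claim is plausible but unverified, and since Lee--Li's conditions are ``for all $s$,'' a single violation at $s=0$ suffices. For the sphere of radius $1/2$ the coefficients are the constant $-1$, which makes the strong condition immediate, consistent with your sketch. One last small point: in the spherical case one must not read the cost naively as $-2\log\cos(d)$ on the radius-$R$ sphere; the paper rescales to the unit sphere via $l_R(\theta)=-\log\cos^2(R\theta)$ before applying the Lee--Li machinery, which is where the radius enters the coefficient formulas.
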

As a consequence, combining Proposition~\ref{contazzi} with Theorem~\ref{ThReduction}, unbalanced optimal transport on spheres or radius $1$ and $1/2$ features smoothness.

MTW tensor for costs of the type $c(x,y)=l(d(x,y))$ was analyzed in \cite{LeeLi} for even smooth functions $l: \R\to [0,+\infty)$ having invertible derivative. In particular,  authors characterize MTW weak and strong conditions on manifolds with constant sectional curvature in terms of some computable explicit functions, see~\cite[Theorem~5.3]{LeeLi}.

\begin{proof} We start by recalling the main results in \cite{LeeLi}. Consider a cost function $J(x,y)=l(d(x,y))$, where $l:\R\to [0,+\infty[\to\R$ is a smooth, even function such that $l''(s)>0$. Set $h(s)=(l')^{-1}(s)$. Then the $J$-exponential map can be computed as  
\begin{equation*}
J\textrm{-}\exp_x(v)=\exp_x\left(\frac{h(|v|)}{|v|}v\right),
\end{equation*}
where $\exp_x$ denotes the Riemannian exponential on $M$ and $|v|=\sqrt{g_x(v,v)}$ denotes the norm with respect to the metric tensor on $M$ (Recall that the $J$-exponential map is defined by the identity $J\textrm{-}\exp_x(v)=y$ if and only if $v=-\nabla_xJ(x,y)$). By definition, the $MTW$ tensor is
\begin{equation*}
MTW_x(u,v,w)=-\frac 3 2 \partial_s^2\partial_t^2|_{s=t=0}J(\exp_x(t u),J\textrm{-}\exp_x(v+sw)),
\end{equation*}
where $x\in M$, and $u,v,w$ are tangent vectors at $x$.
Define $A(s)=\frac{1}{h(s)}$, and
\begin{equation*}
B(s)=\begin{cases}
s \coth(h(s)),& \textrm{if } M=\R^d,\\
\frac{s}{h(s)},& \textrm{if } M=\mathbb{H}^d,\\
s \cot(h(s)),& \textrm{if $M$ is the unit sphere}.\end{cases} 
\end{equation*}
By \cite[Proposition~5.1]{LeeLi}, whenever $u$ and $w$ are $J$-orthogonal, the MTW tensor  can be simplified to
\begin{equation*}
MTW_x(u,v,w)=-\frac 3 2 \left(\alpha(|v|)|u_0|^2|w_0|^2+\beta(|v|)|u_0|^2|w_1|^2+\gamma(|v|)|u_1|^2|w_0|^2+\delta(|v|)|u_1|^2|w_1|^2\right),
\end{equation*}
where $u=u_0+u_1$, $w=w_0+w_1$,   $u_0,w_0\in\on{span}\{v\},u_1,w_1\in(\on{span}\{v\})^\perp$ and coefficients are given by
\begin{eqnarray}\label{eqal}
\alpha(s)&=&\frac{s^2A''(s)+6(A(s)-B(s))-4 s (A'(s)-B'(s))}{s^2},\\\label{eqbe}
\beta(s)&=&\frac{s A'(s)-2(A(s)-B(s))}{s^2},\\\label{eqga}
\gamma(s)&=&B''(s),\\\label{eqde}
\delta(s)&=&\frac{B'(s)}{s},
\end{eqnarray}
in terms of functions $A, B$ defined above. By Theorem~5.3 in \cite{LeeLi}, the MTW tensor satisfies MTW weak condition if and only if, for every $s\in[0,|l'(D)|]$, with $D$ the diameter of $M$, four inequalities hold
\begin{equation}\label{condleeli}
    \beta(s)\leq 0, ~~\gamma(s)\leq 0, ~~\delta(s)\leq 0,~~\alpha(s)+\delta(s)\leq 2\sqrt{\beta(s)\gamma(s)}. 
\end{equation}
Moreover, MTW strong condition holds if and only if the four inequalities are strict for every $s\in (0,|l'(D)|]$.

Note that cost $c$ in \eqref{costo} is of the type $l(d(x,y))$, for $l(s)=-\log(\cos^2(s))$. We compute explicitly functions $A,B$ for the hyperbolic space and for the Euclidean space. In both cases, $\beta(0)>0$, whence MTW weak condition fails.

When $M$ is the $d$-sphere of radius $R\in(0,+\infty)$, we interpret the cost $c$ in \eqref{costo} as $c(x,y)=l_R(d(x,y))$ where $l_R(x,y)=-\log(\cos^2(Rs))$. Hence we set   $B(s)=s \cot(h_R(s))$, with $h_R=(l_R')^{-1}$ and apply \cite[Proposition~5.1]{LeeLi} to compute the MTW tensor on the $d$-sphere of radius $R$ by means of the MTW tensor on the unit $d$-sphere with rescaled distance. Note that MTW conditions (weak or strong) must hold for 
$s\in [0, |l^{'}_R(D)|]$, where $D=\pi$ is the diameter of the unit sphere.

Computing explicitly, 
$\alpha(0) = \beta(0)=\gamma(0)=\delta(0)=\frac13\left(1-\frac{1}{R^2}\right)$. Therefore we conclude that when $R>1$ MTW weak condition fails. On the other hand, an explicit computation gives
\begin{eqnarray*}
\textrm{for } R=1,&&\alpha(s)=\beta(s)=\gamma(s)=\delta(s)\equiv 0,\\
\textrm{for } R=\frac12,&&\alpha(s)=\beta(s)=\gamma(s)=\delta(s)\equiv -1\,.\\
\end{eqnarray*}
Hence for $R=1$ MTW weak condition holds and MTW vanishes on $c$-orthogonal vectors, whereas for $R=1/2$ MTW strong condition holds.
\end{proof}
\begin{remark}
We refer the reader to Section~\ref{SecCconvex} for an alternative way of studying the MTW tensor associated with the cost \eqref{costo} on $M$. In that section, we study the link between the aforementioned tensor and the  MTW tensor associated with the quadratic cost on the cone $\cone(M)$ (see Section~\ref{secsemicouplings} for the definition of the cone and its Riemannian structure).  
\end{remark}
A natural question is whether MTW condition (weak or strong) holds true for spheres of radius $R\in (0,1/2)\cup(1/2,1)$. To this aim, it is sufficient to show that the four functions $\beta_R(\cdot),\gamma_R(\cdot), \delta_R(\cdot), (\alpha_R+\delta_R-2 \sqrt{\beta_R\gamma_R})(\cdot)$ are negative, respectively strictly negative, for every $s\in (0,|2 R \tan(\pi R)|]$, for MTW weak condition, respectively MTW strong condition, to hold. We end this section by  proving that, 
for $R\in(0,1/2)$, $\beta_R(s)\leq 0$ and $\gamma_R(s)\leq 0$, for every $s\in(0, 2 R \tan(\pi R))$. As for the sign of the other two functions, we can only hint at their negativity by analyzing the shape of their 0-level sets. 
  
 Using \eqref{eqal}, \eqref{eqbe}, \eqref{eqga}, \eqref{eqde}, an easy computation gives
\begin{eqnarray*}
\alpha_R(s)&=&\frac{12R^2}{s^2}-\frac2s\cot\left(\frac{1}{R}\arctan(s/(2 R))\right)-\frac{8}{s^2+4 R^2}\csc^2\left(\frac1 R\arctan(s/(2 R))\right),\\
\beta_R(s)&=&\frac{2}{s^2}\xi(s)
,\\
\gamma_R(s)&=&\frac{8 \csc^2\left(\frac1 R\arctan(s/(2 R))\right)}{(s^2+4 R^2)^2}\xi(s)
,\\
\delta_R(s)&=&\frac1s\cot\left(\frac{1}{R}\arctan(s/(2 R))\right)-\frac{2}{s^2+4 R^2}\csc^2\left(\frac1 R\arctan(s/(2 R))\right),\\
\end{eqnarray*}
 where the auxiliary function $s\mapsto\xi(s)$ is defined by
 \begin{equation*}
 \xi(s)=s \cot\left(\frac{1}{R}\arctan(s/(2 R))\right)-2 R^2.
\end{equation*}
We are going to show that, for every $R\in(0,1/2)$ and every $s\in(0, 2 R \tan(\pi R))$, $\xi(s)<0$. 
Note that for $R\in(0,1/2)$, $\frac{\arctan(s/(2 R))}{R}\in(0, \pi)$. Hence $\xi(s)<0$ is equivalent to 
$s \cot(\frac{\arctan(s/(2 R))}{R})< 2 R^2$ which in turn is equivalent to 
$\frac{\arctan(s/(2 R))}{R}> \mathrm{arccot}(\frac{2 R^2}{s})$. 
Using $\arctan x = \mathrm{arccot}(1/x)$, the last inequality is equivalent to 
$\mathrm{arccot}(2 R/s)> R \mathrm{arccot}(\frac{2 R^2}{s}).
$
Set $v=2 R/s$ and define
$k(v)= \mathrm{arccot}(v)-R \mathrm{arccot}(R v)$. To show that $\xi(s)<0$ it is sufficient to prove that $k(v)>0$ on $(0, +\infty)$. This is an easy consequence of the fact that $k(0)=\pi/2(1-R)>0$, 
$\lim_{v\to+\infty} k(v)=0$ and 
\begin{equation*}
k'(v) =\frac{R^2}{1 + R^2 v^2}-\frac{1}{1 + v^2} < 0 , v\in(0,+\infty).
\end{equation*}
%
 

\section{The Wasserstein-Fisher-Rao metric}\label{SecStaDyn}

In this section, we detail the case of the Wasserstein-Fisher-Rao (WFR) metric on a smooth compact Riemannian manifold $M$, which is the cornerstone of unbalanced optimal transport as introduced in \cite{new2015kondratyev,GeneralizedOT1,LMS}.
Recall that the Wassertein-Fisher-Rao   corresponds to the cost function given in \eqref{costo}  
and to the Kullback-Leibler divergence for the marginal penalization (i.e., both entropy functions are given by $\FF(x) = x\log(x) - x + 1$.
First, we prove the equivalence of several definitions of this metric.
In particular, we introduce an equivalent of the Monge formulation of standard OT to this unbalanced setting.
Using this formulation we prove the existence of unbalanced optimal transport maps and an unbalanced version of Brenier polar factorization theorem on the automorphism group of the cone $\cone(M)$ see Theorem \ref{Th:polardecompo}.
A regularity theory for such maps is obtained in section~\ref{SecDualRegularity}: we provide here  its link to an unbalanced Monge-Ampère equation, see section~\ref{secMonge}.

\subsection{Equivalent formulations of  WFR metric.}

As in classical optimal transport,  the Wasserstein-Fisher-Rao metric can be defined in many ways.
Here we detail five of them, namely: dynamical, semi-couplings, dual, Kantorovich and Monge  formulations.
The Kantorovich formulation is the one introduced in Definition~\ref{Def:UOTkanto} and the dual formulation is given in Proposition~\ref{prop:formulationduale}. 
For the sake of clarity, we instantiate them hereafter.  We chose to deal with Monge formulation in another section, namely Section~\ref{sec:mongeformulation}, since it is deduced from a geometric construction on the cone. 

The starting point of all these formulations is 
the dynamical formulation of the WFR metric which appears naturally as a generalization of the Benamou-Brenier formula by introducing a source term in the continuity equation. This is the formulation we first present below.
 
In the sequel, let $(M,g)$ be a compact Riemannian manifold, let $\on{vol}$ denote the Riemannian volume on $M$ and let $\Div$ denote the divergence of a vector field with respect to $\on{vol}$. 
\subsubsection{Dynamical formulation of WFR metric}

Given $\rho_0,\rho_1\in\mathcal{M}_+(M)$ and $a,b > 0$, we start by the following optimization problem 

\begin{equation*}
\inf_{\rho,v, \alpha} \frac12 \int_0^1  \int_M  \left(a^2g_x(v(x),v(x)) + b^2 \alpha^2(t,x) \right)\d \rho_t(x)  \d t 
\end{equation*}
under the constraints of the generalized continuity equation, with time boundary conditions 
\begin{equation*}
\partial_t \rho + \Div(\rho v) = \alpha \rho\,, \rho(0,\cdot)=\rho_0,\rho(1,\cdot)=\rho_1\,.
\end{equation*}
Here the control variables are $\alpha$, the growth rate (also called {\it Malthusian parameter}) and $v$, a vector field, both depending on time $t$ and position $x\in M$. 
\begin{remark}
For $\alpha\equiv 0$, the dynamic formulation above is the well-known  Benamou-Brenier formulation of the optimal transport problem \cite{benamou2000computational}.
\end{remark}
We now give the definition, relying on convexity, which allows us to account for every positive Radon measure and not only those with density with respect to the reference volume measure.
\begin{definition}[Dynamical formulation of $\on{WFR}$ metric]\label{defwfr} Let $\rho_0,\rho_1 \in \mathcal{M}_+(M)$, the {\it WFR metric} is defined by
\begin{equation*} 
\on{WFR}^2(\rho_0,\rho_1) \eqdef \inf_{\rho,\m,\mu} \mathcal{J}(\rho,\m,\mu)  \, ,
\end{equation*}
where 
\begin{equation}\label{EqWFRonM}
\mathcal{J}(\rho,\m,\mu) =  a^2 \iint_{[0,1]\times M} \frac {g^{-1}_x(\tilde{\m}(t,x),  \tilde{\m}(t,x))}{\tilde{\rho}(t,x)} \,\ud \nu(t,x) \, +  b^2\iint_{[0,1]\times M} \frac {\tilde{\mu}(t,x)^2}{\tilde{\rho}(t,x)} \,\ud \nu(t,x)
\end{equation} 

over the set $(\rho,\m,\mu)$ satisfying $\rho \in \mathcal{M}_+([0,1] \times M)$, $\m \in (\Gamma_M^0([0,1] \times M,TM))^*$ which denotes the dual of time-dependent continuous vector fields on $M$ (time-dependent sections of the tangent bundle), $\mu \in \mathcal{M}_+([0,1] \times M)$
subject to the constraint
\begin{equation}\label{Eq:ContinuityEquation}
\iint_{[0,1]\times M} \partial_t f\ud \rho +\iint_{[0,1]\times M} (\m(\nabla_x f)  - f\ud\mu)  =  \int_M f(1,\cdot) \ud \rho_1 -  \int_M f(0,\cdot) \ud \rho_0
\end{equation}
satisfied for every test function $f \in C^1([0,1]\times M,\R)$. Moreover, $\nu \in \mathcal{M}_+([0,1]\times M)$ 
is chosen such that $\rho,\m,\mu$ are absolutely continuous with respect to $\nu$ and $\tilde{\rho},\tilde{\m},\tilde{\mu}$ denote their Radon-Nikodym derivative with respect to $\nu $. 
\end{definition}
Note that due to the one-homogeneity of the formulas with respect to $(\tilde{\rho},\tilde{\m},\tilde{\mu})$, the functional $\mathcal{J}$ is well-defined, i.e., it does not depend on the choice of the dominating measure $\nu$. Moreover, the divergence is defined by duality on the space $C^1(M)$.
Formula \eqref{EqWFRonM} in Definition~\ref{defwfr} is called dynamic since the time variable is involved and only length-space structures can be defined in this way. It is of interest to show that the variational problem admits a so-called static formulation that does not involve the time variable. 

\subsubsection{Semi-couplings formulation}\label{secsemicouplings}
The semi-couplings formulation already appears in \cite{GeneralizedOT2} and in another form in \cite{LMS}. In both references, equivalence between semi-couplings and dynamical formulation is proved in the Euclidean case.
We now extend those results to a Riemannian setting.

Given $\rho_0,\rho_1\in\mathcal{M}_+(M)$, set
\begin{align*}
		\Gamma(\rho_0,\rho_1) \eqdef \left\{
			(\eta_0,\eta_1) \in \big(\mathcal{M}_+(M^2)\big)^2 \colon
			p^1_* \eta_0 = \rho_0,\, p^2_* \eta_1= \rho_1
			\right\}\,,
\end{align*}
where $p^1$ and $p^2$ denote the projection on the first and second factors of the product $M^2$.
Moreover, consider the cone 
 \begin{equation*}
 \cone(M)=\{(x,r)\mid x\in M, r>0\},
 \end{equation*} endowed with the Riemannian metric
\begin{equation}\label{eqmetricacono}
h_{(x,r)}=a^2r^2g_x+4 b^2\ud r^2,
 \end{equation} 
where $g$ is the Riemannian metric on $M$, and $a,b$ appear in the definition of WFR metric. Finally, denote by $\dcone$ the distance on $\cone(M)$ associated with the Riemannian metric $h$.
 
\begin{theorem}[Semi-couplings formulation of $\on{WFR}$ metric]\label{Th:DynamicToStaticOnRiemannianManifolds}
The WFR distance satisfies
\begin{align}
	\label{Eq:StaticProblem}
		\on{WFR}^2(\rho_0,\rho_1)  = \min_{(\eta_0,\eta_1) \in \Gamma(\rho_0,\rho_1)}
			\int_{M^2} \ddcone \left(\left(x,\sqrt{\frac{\ud \eta_0}{\ud \eta}}\right),\left(y,\sqrt{\frac{\ud \eta_1}{ \ud \eta}}\right)\right) \ud \eta(x,y) \,,
	\end{align}
where  $\eta$ is any measure that dominates $\eta_0,\eta_1$.
\end{theorem}
The functional 
\begin{equation*}
\mathcal{S}(\eta_0,\eta_1) \eqdef \int_{M^2} \ddcone \left(\left(x,\sqrt{\frac{\ud \eta_0}{ \ud \eta}}\right),\left(y,\sqrt{\frac{\ud \eta_1}{ \ud\eta}}\right)\right) \ud \eta(x,y)
\end{equation*} is well-defined, i.e., it does not depend on the choice of the measure $\eta$. Indeed, the squared distance function $\ddcone$ is two-homogeneous with respect to dilation of the mass variables, since $h_{(x,\lambda r)}=a^2(\lambda r)^2g_x+4 b^2\lambda^2\ud r^2$. As a consequence of Rockafellar's theorem \cite[Theorem 5]{rockafellar1971integrals}, $\mathcal{S}$ is convex and lower semicontinuous on the space of Radon measures as the Legendre-Fenchel transform of a convex functional on the space of continuous functions.

Our proof of Theorem \ref{Th:DynamicToStaticOnRiemannianManifolds} is an adaptation to the Riemannian case of the one in  \cite[Theorem~4.3]{GeneralizedOT2}, to which we refer the reader for technical details. The same reasoning, based on a simple regularization argument that is intrinsic on Riemannian manifolds, applies under minor adaptations to the standard Wasserstein distance $W_2$  on Riemannian manifolds, see for instance the comments in \cite[Remark 8.3]{VillaniTOT}. A different proof of the equivalence between dynamical and semi-couplings formulation for the Wasserstein distance $W_2$ in the Riemannian setting is given in \cite{UserGuideOT} which uses the Nash isometric embedding theorem.
\begin{proof}[Proof of Theorem \ref{Th:DynamicToStaticOnRiemannianManifolds}]
First of all, the set $\Gamma$ is weak$^*$ closed, and the functional $\mathcal{S}$ is weakly continuous and lower semicontinuous. Therefore, the fact that the minimum for $\mathcal{S}$ is attained follows by application of the direct method of calculus of variations. 

Since $\dcone$ is a distance on $\cone(M)$, one can prove that $\mathcal{S}$ is a distance on $\mathcal{M}_+(M)$ and $\mathcal{S}$ is continuous w.r.t. the weak$^*$ topology, as done in \cite{GeneralizedOT2}. 

We claim that for every pair of measures $(\rho_0,\rho_1)$ that are a finite linear combinations of Dirac masses, the inequality
\begin{equation*}
\mathcal{S}(\rho_0,\rho_1) \geq \on{WFR}^2(\rho_0,\rho_1), 
\end{equation*}
 holds.
 To see this, note that for $\rho_0 = \sum_{i} a_i \delta_{x_i}$ and $\rho_1 = \sum_{j} b_j \delta_{y_j}$, for finite sets of points $\{x_i,y_j\}_{i,j}\subset M$, 
 the minimization problem \eqref{Eq:StaticProblem} can be reduced to a linear optimization problem in finite dimension. Indeed, the optimal semi-couplings can be proved to have support on the product of the supports of $\rho_0$ and $\rho_1$. As a consequence, the optimal semi-couplings can be written as $\gamma^k = \sum_{i,j} m^k_{i,j} \delta_{(x_i,y_j)}$ for $k = 0,1$. Then, one has
\begin{align*}
\mathcal{S}(\rho_0,\rho_1) &= \sum_{i,j} \ddcone\left((x_i,m_{i,j}^0),(y_j,m_{i,j}^1)\right) \\
 & \geq  \sum_{i,j} \on{WFR}^2(m_{i,j}^0\delta_{x_i},m_{i,j}^1\delta_{y_j}) \geq \on{WFR}^2(\rho_0,\rho_1)\,,
\end{align*}
where the first inequality comes from the fact that the distance on the cone (with mass coordinates) for a geodesic $(x(t),m(t))$ is given by the evaluation of $\on{WFR}$ on the path $m(t)\delta_{x(t)}$. The second inequality is given by subadditivity of $\on{WFR}^2$. 
By density of finite linear combination of Dirac masses and weak$^*$ continuity of both $\on{WFR}$ and $\mathcal{S}$, the inequality  $\mathcal{S}(\rho_0,\rho_1) \geq \on{WFR}^2(\rho_0,\rho_1)$ holds on $(\mathcal{M}_+(M))^2$.
\par
We now prove the reverse inequality which follows using the convexity of $(\rho_0,\rho_1)\mapsto \on{WFR}^2(\rho_0,\rho_1)$. By subadditivity of $\on{WFR}^2$, one has, for every $\rho_2\in\mathcal{M}_+(M)$
\begin{equation*}
\on{WFR}^2(\rho_0 + \rho_2,\rho_1 + \rho_2) \leq \on{WFR}^2(\rho_0,\rho_1)\,.
\end{equation*}
Using the triangular inequality and the fact that the $\on{WFR}$ metric is bounded above (up to a multiplicative constant) by the Hellinger distance, we also have, for $\varepsilon_1>0$
\begin{equation*}
\on{WFR}(\rho_0,\rho_1) \leq \on{WFR}(\rho_0+\varepsilon_1 \on{vol},\rho_1+\varepsilon_1 \on{vol}) + 2\,\text{cst}\,\sqrt{\varepsilon_1} \,.
\end{equation*}
Let us be more precise on the previous inequality.
Consider a triplet $(\rho,\m,\mu)$ which is a solution to the continuity equation \eqref{Eq:ContinuityEquation}, then so does the triplet $(\rho + \varepsilon_1 \on{vol},\m,\mu)$ satisfying the boundary conditions $\rho(0,\cdot) = \rho_0$, $\rho(1,\cdot) = \rho_1$. Note that $\varepsilon_1 \on{vol}$ is constant in time and space. In addition, it is obvious that
$$\mathcal{J}(\rho+ \varepsilon_1 \on{vol},\m,\mu) \leq \mathcal{J}(\rho,\m,\mu)\,.$$
To prove the final result, it suffices to prove that $\mathcal{S}(\rho_0+\varepsilon_1 \on{vol},\rho_1+\varepsilon_1 \on{vol}) \leq \mathcal{J}(\rho+ \varepsilon_1 \on{vol},\m,\mu) + \varepsilon_0$ for any $\varepsilon_0 >0$. This will be done via a smoothing argument which is standard in the Euclidean case using convolution but has never been adapted, to the best of our knowledge, to work on Riemannian manifolds (see \cite[Remarks 8.3]{VillaniTOT}).
\par
Our goal is to prove that there exists a path of smooth quantities $(\rho_\varepsilon,\m_\varepsilon,\mu_\varepsilon)$ for which $\mathcal{J}(\rho_\varepsilon,\m_\varepsilon,\mu_\varepsilon)$ is close to $\mathcal{J}(\rho,\m,\mu)$ and $\rho_\varepsilon$ is strictly positive and the time endpoints of the path are close in the weak$^*$ topology. 
The conclusion can then be obtained by integrating the flow defined by the vector field $(\m_\varepsilon/\rho_\varepsilon,\mu_\varepsilon/\rho_\varepsilon)$. It gives that 
$\mathcal{S}(\rho_\varepsilon(0),\rho_\varepsilon(1)) \leq \mathcal{J}(\rho_\varepsilon,\m_\varepsilon,\mu_\varepsilon)$ and the conclusion is similar to the Euclidean case \cite[Theorem 5]{GeneralizedOT2}.
\par
By compactness of $M$, it is sufficient to locally smooth the path on $M$ by iteration of this smoothing. Therefore, we will work on a chart $U$ around a point $x_0 \in M$. By Moser's lemma, it is possible to choose the chart such that the volume form is the Lebesgue measure. \par
\textbf{Averaging over perturbations of identity.}
We construct perturbations (of compact support) of the identity which will be local translations around $x_0$ and which will play the role of the translations in the standard convolution formula. We consider a ball $B(x_0,r_0)$ and a function $u$ whose support is contained in $B(x_0,r_0)$ and is constant equal to $1$ on $B(x_0,r_1)$ for $0<r_1 < r_0$.
For a given vector $v \in \R^d$, we consider the map $\Phi_v(x) = x + u(x)v$ which is a smooth diffeomorphism. We extend $\Phi$ to the whole manifold $M$ by defining it as identity outside of $U$.
\par
Let $k: \R^{d+1} \to \R_+$ be a smooth symmetric and nonnegative function whose support is contained in the unit ball and such that $\int k(y) \ud y = 1$ and define for $\varepsilon>0$, $k_\varepsilon(x) = k(x/\varepsilon)/\varepsilon^{d+1}$ whose support is thus contained in the ball of radius $\varepsilon$. We define the mollifier $\mathsf{k}_\varepsilon \, \star $ acting on $f \in C([0,1] \times U,\R)$ by
\begin{equation*}
(\mathsf{k}_\varepsilon \star f) (s,x) = \int_{\R}\int_{U} k_\varepsilon(s,v)f(t+s,\Phi_v^{-1}(x))  \ud v  \ud s\,,
\end{equation*}
which is well defined for $\varepsilon$ small enough, extending the function outside the time interval $[0,1]$ as a constant. Moreover, for $\varepsilon$ sufficiently small, it coincides with the usual convolution on a  neighborhood of $x_0$. By duality, it is well-defined on Radon measures and extends trivially to vector-valued measures as follows
\begin{align*}
(\mathsf{k}_\varepsilon \star \rho) (s,x) = \int_{\R}\int_{U} k_\varepsilon(s,v)(\Phi_v)_*(\rho(t+s))  \ud v  \ud s\,, \\
(\mathsf{k}_\varepsilon \star m) (s,x) = \int_{\R}\int_{U} k_\varepsilon(s,v)\on{Ad}_{\Phi_v^{-1}}^*(\m(t+s))  \ud v  \ud s\,.
\end{align*}

We consider the path $(\Phi_v)_*(\rho)$ which satisfies the continuity equation for the triple of measures $\left((\Phi_v)_*(\rho),\on{Ad}_{\Phi_v^{-1}}^*(\m),(\Phi_v)_*(\mu)\right)$ and average over $v$ to consider
\begin{equation*}
(\rho_\varepsilon,\m_\varepsilon,\mu_\varepsilon) = \left(\mathsf{k}_\varepsilon \star \rho,\mathsf{k}_\varepsilon \star \m,\mathsf{k}_\varepsilon \star \mu \right)\,.
\end{equation*}
As a convex combination, this path satisfies the continuity equation and the boundary conditions are close in the weak$^*$ topology when $\varepsilon$ tends to $0$. An important remark is that, for $\varepsilon$ small enough, $\mathsf{k}_\varepsilon \star \on{Ad}_{\Phi_v^{-1}}^*(\m)$ reduces to the standard convolution on $m$ in a small neighborhood of $x_0$ since $D\Phi_v = \on{Id}$ in a neighborhood of $x_0$ since $u \equiv 1$ on $B(x_0,r_1)$.
\par
\textbf{Use of convexity of $\mathcal{J}$:} 
For notation convenience, we denote by $f$ the integrand of $\mathcal{J}$ and we make the abuse of notation to use $\rho,\m,\mu$ instead of their corresponding densities w.r.t. $\nu$ a dominating measure. 
\par The change of variables $y  = \Phi_v^{-1}(x)$ (we use one-homogeneity hereafter) leads to 
\begin{multline*}
\mathcal{J}(\rho_\varepsilon,\m_\varepsilon,\mu_\varepsilon) =  \int_{[0,1] \times M} f\left(x,(\rho_\varepsilon,\m_\varepsilon,\mu_\varepsilon) \right) \ud \nu(x) \leq \\ \int_{\R}\int_{U} \int_{[0,1]\times M} k_\varepsilon(s,v)f(\Phi_v(y),(\rho(t+s),D\Phi_v(t,y)\m(t+s),\mu(t+s))) \ud \nu(t,y) \ud t \ud s  \ud v\,.
\end{multline*}
Moreover, since the metric $g$ on $M$ is smooth and in particular uniformly continuous on $M$ and since $\| D\Phi_v - \on{Id} \| \leq \text{cst} \| v \| $ for a constant that only depends on $u$, we have, for any $\varepsilon_2>0$, the existence of $\delta >0$ such that if $\|v\| \leq \delta$ then,
\begin{equation*}
|g_x(w,w) - g_{\Phi_v(x)}(D\Phi_v(x)w,D\Phi_v(x)w) | \leq \varepsilon_2\, g_x(w,w)\,,
\end{equation*}
 for every $w \in T_xM$.
Therefore, a direct estimation leads to 
\begin{multline*}
\left| \int_{\R \times M}k_\varepsilon(s,v)f(\Phi_v(x),(\rho(t+s),\m(t+s),\mu(t+s))) \ud \nu(t,x)  -  \int_{[0,1]\times M} f(x,(\rho(t),\m(t),\mu(t))) \ud \nu(t,x) \right| \\ \leq \varepsilon_2  \mathcal{J}(\rho,\m,\mu)\,,
\end{multline*}
and as a consequence the desired result,
\begin{equation*}
 \mathcal{J}(\rho_\varepsilon,\m_\varepsilon,\mu_\varepsilon)  \leq \mathcal{J}(\rho,\m,\mu)  +\varepsilon_2  \mathcal{J}(\rho,\m,\mu)\,.
\end{equation*}
Since this averaging reduces to standard convolution in the coordinate chart $U$ in a small neighborhood of $x_0$, it implies that 
$(\rho_\varepsilon,\m_\varepsilon,\mu_\varepsilon)$ is smooth in a neighborhood of $x_0$ and $\rho_\varepsilon \geq \varepsilon_1 \on{vol}$. By compactness of $M$, iterating a finite number of times this argument gives the desired path.
\end{proof}

Next, we imply the equivalence between both  formulations above and a particular UOT problem introduced in Section \ref{SecDualRegularity}.

\subsubsection{Kantorovich formulation and dual formulation}
As in \cite{GeneralizedOT2} the application of Fenchel-Rockafellar Duality Theorem 
gives the dual formulation of $\on{WFR}$. 
This is summarized in the following proposition.

\begin{proposition}[Dual formulation of $\on{WFR}$]\label{Th:Duality}
The following equality holds on $(M,g)$, 
\begin{equation*}
\on{WFR}^2(\rho_0,\rho_1) =
 \sup_{(\phi,\psi)\in C(M)^2  }  \int_M \phi(x) \ud \rho_0(x) + \int_M \psi(y) \ud \rho_1(y) \\
\end{equation*}
subject to the constraint 
\begin{equation*}
\begin{cases}\label{MultiplicativeConstraint}
 \phi(x) \leq 1\,, \quad \psi(y) \leq 1\, , \\
(1-\phi(x))(1-\psi(y)) \geq \cos^2\left(d(x,y)\wedge {(\pi / 2)}\right)\,,
\end{cases} 
\end{equation*}
for every  $(x,y)\in M^2$.
Setting $z_0=-\log(1-\phi)$ and $z_1=-\log(1-\psi)$ a reformulation of this linear optimization problem is
\begin{equation}\label{Eq:Reformulation}
\on{WFR}^2(\rho_0,\rho_1) =
 \sup_{(z_0,z_1)\in C(M)^2  }  \int_M \left(1-e^{-z_0(x)}\right) \ud \rho_0(x) + \int_M \left(1-e^{-z_1(y)}\right) \ud \rho_1(y) 
\end{equation}
subject to 
\begin{equation}\label{Eq:Constraint}
z_0(x)+ z_1(y) \leq -\log\left(\cos^2\left(d(x,y)\wedge {(\pi / 2)}\right)\right)\,,
\end{equation}
for every $(x,y)\in M^2$.
\end{proposition}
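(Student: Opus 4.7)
The plan is to derive both dual formulations as a direct specialization of the general duality theorem for unbalanced optimal transport (Proposition~\ref{prop:formulationduale}) to the WFR-specific choice of entropy and cost, followed by an elementary exponential change of variables. Since the Wasserstein--Fisher--Rao metric squared coincides with the value of the Kantorovich-type functional \eqref{EqPrimalUOT} for the entropy functions $\FF_0=\FF_1=\FF$, with $\FF(x)=x\log x - x+1$, and the cost $c(x,y)=-\log\cos^2(d(x,y)\wedge \pi/2)$, it suffices to compute the right-hand side of \eqref{EqDualUOTProblem} in this setting.

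By Remark~\ref{rem_FKL}, the Fenchel--Legendre conjugate is $\FF^*(y)=e^y-1$, which is finite on all of $\R$. Substituting $-\FF^*(-z) = 1-e^{-z}$ into the dual \eqref{EqDualUOTProblem} together with the constraint \eqref{EqInequalityConstraint} yields directly the formulation \eqref{Eq:Reformulation}--\eqref{Eq:Constraint}. The required strong duality --- the equality of this supremum with $\on{WFR}^2(\rho_0,\rho_1)$ --- follows from Fenchel--Rockafellar: the primal is a convex lower semicontinuous problem with finite value (the degenerate coupling $\gamma=0$ already provides the finite upper bound $\FF(0)(|\rho_0|+|\rho_1|)$) and $(z_0,z_1)=(0,0)$ is a feasible interior point for the dual, as carried out explicitly for WFR in \cite{GeneralizedOT2,LMS}.

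To pass from \eqref{Eq:Reformulation}--\eqref{Eq:Constraint} to the multiplicative formulation \eqref{MultiplicativeConstraint}, I perform the change of variables $\phi = 1-e^{-z_0}$, $\psi = 1-e^{-z_1}$, which is a continuous bijection from $C(M)^2$ onto $\{(\phi,\psi)\in C(M)^2:\phi<1,\psi<1\}$. Under this bijection, the objective $\int(1-e^{-z_0})\,\ud\rho_0+\int(1-e^{-z_1})\,\ud\rho_1$ becomes $\int\phi\,\ud\rho_0+\int\psi\,\ud\rho_1$, while exponentiating the additive constraint yields exactly $(1-\phi(x))(1-\psi(y))\geq \cos^2(d(x,y)\wedge \pi/2)$. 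Extending the feasible set to the closure $\{\phi\leq 1,\psi\leq 1\}$ does not change the supremum, since any admissible pair with $\phi$ attaining the value $1$ on a set can be approximated by $(\phi-\varepsilon,\psi-\varepsilon)$, which remains admissible for the multiplicative constraint and whose objective converges as $\varepsilon\to 0^+$ by dominated convergence.

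The only genuinely delicate point is the justification of strong duality, because the cost $c$ is unbounded near the set $\{d(x,y)\geq \pi/2\}$. However, $c$ is bounded below by $0$, proper, and lower semicontinuous, and $\FF$ has superlinear growth, so the Fenchel--Rockafellar argument applies after a routine truncation at the cut locus; this is precisely the argument of \cite[Section~4]{LMS}. Once strong duality is granted, the remaining steps are purely algebraic --- substitution of $\FF^*$ into \eqref{EqDualUOTProblem} and an exponential change of variables --- and contain no further difficulty.
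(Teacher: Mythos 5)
Your proposal reverses the paper's logical flow. The paper derives Proposition~\ref{Th:Duality} by applying Fenchel--Rockafellar duality directly to the semi-coupling problem \eqref{Eq:StaticProblem} obtained in Theorem~\ref{Th:DynamicToStaticOnRiemannianManifolds}, exactly as in \cite{GeneralizedOT2}: one first gets the multiplicative form \eqref{MultiplicativeConstraint}, then passes to \eqref{Eq:Reformulation}--\eqref{Eq:Constraint} by a change of variables, and only afterwards observes that this coincides with the dual of the abstract entropy-transport problem, which is how Proposition~\ref{Eq:KLFormulation} is deduced. You instead start from ``$\on{WFR}^2$ equals the Kantorovich-type value \eqref{EqPrimalUOT}'' and invoke Proposition~\ref{prop:formulationduale}; that is, you take Proposition~\ref{Eq:KLFormulation} as the input and derive Proposition~\ref{Th:Duality} from it, which is the opposite direction to the paper.

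This is not merely a stylistic difference: the step you treat as a given is precisely what the paper needs to prove. In this paper $\on{WFR}^2$ is \emph{defined} by the dynamical formulation of Definition~\ref{defwfr} on a compact Riemannian manifold. The identification of this dynamic quantity with any static formulation (semi-coupling or Kantorovich) is established in \cite{GeneralizedOT2,LMS} only in the Euclidean case; the paper's Theorem~\ref{Th:DynamicToStaticOnRiemannianManifolds} is exactly the extension to the Riemannian setting, and it is never invoked in your argument. So the opening sentence ``Since the Wasserstein--Fisher--Rao metric squared coincides with the value of the Kantorovich-type functional \eqref{EqPrimalUOT}\dots'' is an unproved assertion in the manifold setting. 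To repair the argument you should first invoke Theorem~\ref{Th:DynamicToStaticOnRiemannianManifolds} to get $\on{WFR}^2 = $ semi-coupling value, then use the known (Euclidean-independent, purely measure-theoretic) equivalence between the semi-coupling functional and the entropy-transport functional from \cite{LMS}, and only then apply Proposition~\ref{prop:formulationduale}. The remaining pieces of your argument --- the substitution $\FF^*(y)=e^y-1$, the change of variables $\phi=1-e^{-z_0}$, $\psi=1-e^{-z_1}$ turning the additive constraint into the multiplicative one, and the density argument to close the feasible set to $\{\phi\le 1,\psi\le 1\}$ --- are all correct.
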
 
Interestingly this last formulation is exactly the dual formulation of $\on{UOT}$ defined in Proposition \ref{prop:formulationduale} for $c(x,y)=-\log\left(\cos^2\left(d(x,y)\wedge {(\pi / 2)}\right)\right)$ and dual entropy functions $\FF_0^*(x) =\FF_1^*(x)= \FF^*(x) = e^{x} - 1$. 
The existence of Lipschitz solutions to the dual problem is proved in Corollary~\ref{Co:exilip},  under the admissibility condition on the measures, see Section \ref{SecExistenceLipschitz}.
Without these assumptions, the existence of potentials can be proved in a less regular space of functions, see \cite[Section 6.2]{LMS}.

\begin{proposition}[Kantorovich formulation of $\on{WFR}$] The following equality holds true on $M$
\begin{multline}\label{Eq:KLFormulation}
\on{WFR}^2(\rho_0,\rho_1) = \inf_{\gamma \in \mathcal{M}_+(M^2)}\on{KL}(p^1_*\gamma,\rho_0) + \on{KL}(p^2_* \gamma,\rho_1) \\ - \int_{M^2} \log(\cos^2(d(x,y) \wedge {(\pi / 2)})) \ud \gamma(x,y)\,.  
\end{multline}
\end{proposition}

\subsection{The formal Riemannian submersion and Monge formulation of $\on{WFR}$}\label{sec:mongeformulation} %
OT supports an interesting geometric framework.
Indeed, the push-forward action of the diffeomorphisms group on the space of densities is a (formal) Riemannian submersion to the space of densities endowed with the Wasserstein metric, see \cite{khesin2008geometry,DelanoeGeometryOT} for more details. 
This structure also exists in the case of UOT, as already explained in \cite{GALLOUET20184199}. We briefly recall it hereafter.

\par
Recall that a Riemannian submersion is a submersion $\pi$ between two Riemannian manifolds $M$ and $N$, such that $\d \pi$ is an isometry from the orthogonal of its kernel onto its range. An important property of Riemannian submersion is that every geodesic on $N$ can be lifted (called horizontal lift) to a unique geodesic on $M$ (having the same length), up to the choice of a basepoint in $M$.
In the following, the roles of $M$ and $N$ are taken by $
\Diff(M)$, the group of diffeomorphisms of $M$ and $\Dens_p(M)$ the space of probability densities on $M$.
We choose a reference volume form $\rho_0$ on $M$ and define
\begin{align*}& \pi_0: \Diff(M) \to \Dens_p(M) \\
&\pi_0(\varphi) = \varphi_* \rho_0
\end{align*} 
which is a (formal) Riemannian submersion of the metric $L^2(M;\rho_0)$ on  $ \Diff(M)$ to the Wasserstein $W_2$ metric on $\Dens_p(M)$. 
Using the horizontal lift property of geodesics mentioned above, the Benamou and Brenier dynamic formulation \cite{benamou2000computational} can be rewritten on the group $\Diff(M)$ as the Monge problem,
\begin{equation*}
W_2(\rho_0, \rho_1)^2 = \inf_{\varphi \in \Diff(M)} \left\{ \int_M d_{M}^2 (\varphi(x) , x)  \, \rho_0(x) \, \d \!\on{vol}(x) \, : \, \varphi_*\rho_0 = \rho_1 \right\}\,.
\end{equation*}

In the unbalanced case, the group $\Diff(M)$ is replaced with the semidirect product of groups between $\Diff(M)$ and the space of positive functions on $M$ which is a group under pointwise multiplication.
It is not a direct product but a semidirect one, where the composition law is defined such that the map $\pi_1$ given by 
\begin{align*}& \pi_1: \left( \Diff(M) \ltimes C(M,\R_{>0}) \right) \times \Dens(M) \mapsto \Dens(M) \\
&\pi_1\left((\varphi,\lambda) , \rho \right) \eqdef\varphi_* (\lambda \rho)
\end{align*}
is a left-action of the group $\Diff(M)  \ltimes C(M,\R_{>0})$ on the space of densities.
Similarly to the optimal transport case, this action is actually a Riemannian submersion between $L^2(M,\cone(M);\rho_0)$ and $\Dens(M)$ endowed with the $\on{WFR}$ metric.
Note that the $L^2$ metric is defined by a density (the initial density) on $M$ and the cone metric on $\cone(M)$ defined in Section~\ref{secsemicouplings} (see \cite{freed1989} for more details), namely
\begin{equation*}
\mathcal{H}_{(x,m)}(\d x,\d m) = a^2 m g_x + b^2 \frac{\d m^2}{m}\,\cdot
\end{equation*}
Up to the change of variable $m = r^2$, we find that the metric can be rewritten as 
\begin{equation}\label{EqConeMetric}
h_{(x,r)}(\d x,\d r) = a^2 r^2 g_x + 4b^2 \d r^2\,,
\end{equation}
which is called a cone metric\footnote{It is interesting to check that other Riemannian metrics on the cone can be chosen provided they are two-homogeneous in the radial variable. Some of the results of this article carry over such cases.}, see equation \eqref{eqmetricacono}. Since it is a classical formulation of this metric, we adopt this change of variable in the rest of the paper.
In particular, the action is changed into
\begin{align*}& \pi_{\rho}: \left( \Diff(M) \ltimes C(M,\R_{>0}) \right) \times \Dens(M) \to \Dens(M)\nonumber \\
&\pi_{\rho}\left((\varphi,\lambda) , \rho \right) \eqdef  \varphi_* (\lambda^2 \rho)\,,
\end{align*}
and the metric on $\cone(M)$ is the cone metric \eqref{EqConeMetric}. 
As done in \cite{GALLOUET20184199} we can identify this semidirect product of groups with the automorphism group of the cone $\cone(M)$ (since it has a multiplicative group structure in the $\R_{>0}$ component).
Thus, to shorten the notations, we use $\on{Aut}(\cone(M))$ instead of $\Diff(M) \ltimes C(M,\R_{>0})$.
We now state the (formal) Riemannian submersion result obtained in \cite{GALLOUET20184199}.

\begin{proposition}\label{prop:subriem}
Let $\rho_0 \in \Dens(M)$ be a positive density and $\pi$ be the map
\begin{align}& \pi:  \on{Aut}(\cone(M))  \to \Dens(M)\nonumber \\
&\pi(\varphi,\lambda) =  \varphi_* (\lambda^2 \rho_0)\,.\label{eq:submersion}
\end{align}
Then, $\pi$ is a Riemannian submersion between $\on{Aut}(\cone(M))$ endowed with the metric $L^2(M,\cone(M);\rho_0)$ and $\Dens(M)$ with the $\on{WFR}$ metric.
\end{proposition}
For details about the proof, we refer the reader to \cite{GALLOUET20184199}.
This proposition can be used to deduce a static or Monge formulation of the variational problem.

\begin{definition}
\label{Mongeformulation} Let $(\rho_0,\rho_1)\in \mathcal{M}_+(M^2)$.
The {\it Monge formulation of $\on{WFR}$} is given by
\begin{eqnarray}\label{Eq:MongeFormulation}
\on{M-WFR}^2(\rho_0, \rho_1) &=&\inf_{(\varphi,\lambda) } \left\{ \int_{M}\d^2_{\cone(M)}\left((x,1),(\varphi(x),\lambda(x))\right) d\rho_0(x) \, : \, \varphi_*(\lambda^2 \rho_0 )= \rho_1 \right\}\,,\\
&=&\inf_{(\varphi,\lambda) } \left\{ \d_{\mbox{Aut}(\cone(M))}^2\left( (\mbox{Id},1),(\varphi,\lambda)\right)\, : \, \varphi_*(\lambda^2 \rho_0 )= \rho_1 \right\}\,\nonumber
\end{eqnarray}
where the infimum is taken over $(\varphi,\lambda)\in\Diff(M) \ltimes C(M,\R_{>0})$ and $(\on{Id},1)$ denotes the identity  in $\mbox{Aut}(\cone(M))$.
\end{definition}
This Monge formulation extends to more general divergences and costs. Indeed, one can formulate
\begin{equation*}\on{M-UOT}^2(\rho_0, \rho_1) =\inf_{(\varphi,\lambda) } \left\{ \int_{M}\mathcal{D}_{\cone(M)}\left((x,1),(\varphi(x),\lambda(x))\right)^2 d\rho_0(x) \, : \, \varphi_*(\lambda^2 \rho_0 )= \rho_1 \right\}\,,
\end{equation*}
where \begin{equation}\label{EqConeFormulation}
\mathcal{D}_{\cone(M)}((x,r),(y,s))^2 = \inf_{z \in \R_{>0}} \left(r^2F_0(z/r^2) + s^2F_1(z/s^2) + c(x,y)z\right)\,.
\end{equation}
Let us remark that the quantity\footnote{Note that with respect to the first section we made the slight change of variable with the square root to remain consistent with the definition of the group action.} $\mathcal{D}_{\cone(M)}$ is not necessarily a power of a distance on the cone but it is the case in the three following situations. When $\FF_0 = \FF_1$ is the relative entropy, and $c(x,y) = -\log(\cos(d(x,y)\wedge\frac \pi 2)^2)$ the function $\mathcal{D}_{\cone(M)}((x,r),(y,s))$ is almost the distance on the cone but not exactly\footnote{For the cone distance, the minimum is taken with $\pi$ rather than $\pi/2$, this difference is explained by the fact that at the level of the measures, the transformation can occur simultaneously for both Dirac masses.} since
$\mathcal{D}_{\cone(M)}((x,r),(y,s))^2 = r^2 + s^2 - 2rs \cos(d(x,y)\wedge\frac \pi 2)$.
In this case, he equality between the two seemingly different Monge formulations holds.
For $\FF_0 = \FF_1$ is the relative entropy and $c(x,y) = d(x,y)^2$ (Gaussian-Hellinger case), there holds $\mathcal{D}_{\cone(M)}((x,r),(y,s))^2 = r^2 + s^2 - 2rse^{-d(x,y)^2/2}$.
The last known case is for \emph{partial optimal transport} where the divergences are taken as the total variation of measures given by the entropy function $F(x) = |x - 1|$ and the cost is $c(x,y) =d^q(x,y)$, for $q\geq 1$. Then,  
$\mathcal{D}_{\cone(M)}((x,r),(y,s))^q = r + s - (\min(r,s))\min(0,2 - d(x,y)^q)$ gives a distance.

A consequence of the semi-couplings formulation is the relaxation inequality  $\on{M-WFR}^2(\rho_0, \rho_1)\geq \on{WFR}^2(\rho_0, \rho_1)$. Indeed, it is sufficient to consider, for any $\phi$,  $\gamma(x,y)=(\Id,\phi)_{\#}\rho_0$, $\gamma_0(x,y)=\gamma(x,y)$ and $\gamma_1(x,y)=\lambda^2(x)\gamma(x,y)$.
The converse inequality does not hold in general since in the case of unbalanced transport not only the particles can split but also they can reach the apex of the cone.

However under our admissibility condition on $(\rho_0, \rho_1)$  we prove that $\on{M-WFR}^2(\rho_0, \rho_1)= \on{WFR}^2(\rho_0, \rho_1)$ in Proposition \ref{Th:PolarFactorisationStep}.

\subsection{Kantorovich relaxation: the conic formulation}\label{SecConicFormulation}
  Introduced in \cite{LMS}, the following  important formulation can be interpreted as a natural Kantorovich relaxation of the Monge formulation. 
The idea is to consider the pair $(\varphi,\rho)$ as a stochastic object.
From a cost on the cone defined by minimization in \eqref{EqConeFormulation}, one defines the \emph{conic formulation} 
\begin{equation*}
\operatorname{C-OT}(\rho_0,\rho_1) = \inf_{\gamma \in \tilde\Gamma} \int_{\cone(M)^2} \mathcal{D}_{\cone(M)}((x,r),(y,s))^2 d\gamma((x,r),(y,s))\,,
\end{equation*}
where $\tilde\Gamma$ denotes the set of positive Radon measures $\gamma$ on $\cone(M)^2$  such that 
\begin{equation*}
    \begin{cases}
         \rho_0(x) = \int_{\R} r^2 [p^1_{*}\gamma](x,dr)\,,\\
          \rho_1(y) = \int_{\R} s^2 [p^2_{*}\gamma](y,ds)\,.
    \end{cases}
\end{equation*}
The last conditions are moment constraints rather than marginal ones as is the case in standard OT. Moreover, this formulation does not require the plan to be a probability measure on the product space although it can also be restricted to the set of probability measures by action with dilations, see \cite{LMS,AndreaRelaxation}. In fact, formula \eqref{EqConeFormulation} is $2$-homogeneous so that the mass can always be rescaled pointwisely.
Last, the moment constraint is the natural relaxation of the action by pushforward and rescaling $\varphi_*(\lambda^2\rho_0)$. 
Note that from the numerical point of view, introducing this additional radial variable is costly, yet it is amenable to entropic regularization, see \cite{sturmfels2023toric}. The proof of equivalence with the formulations introduced above can be found in \cite{LMS}. For our purpose and to prepare the discussion of $c$-convex functions in Section \ref{SecCconvex}, we simply note that the dual solutions of this problem are also dual solutions of an OT problem; the optimal potentials take the form $(x,r)\mapsto r^2p(x)$ and $(y,s)\mapsto s^2q(y)$ for functions $p,q$ defined on $M$. These potentials are $2$-homogeneous functions in the radial variable by construction.

\subsection{Monge solution and polar factorization on the automorphism group}
\label{secMonge}

The geometric structure used to show Brenier's polar factorization theorem \cite{Brenier1991} in standard optimal transport relies on the Riemannian submersion and solution of Monge problem. Thanks to results given in Section~\ref{sec:mongeformulation} and after finding a solution to the Monge problem $\on{M-WFR}$ we generalize in this section polar factorization to the unbalanced framework.

\subsubsection{Monge solution of $\,\on{WFR}$}

To show the existence of a solution to Monge problem \eqref{Eq:MongeFormulation} we
start by solving $\on{WFR} (\rho_0,\rho_1)$, in the dual form \eqref{Eq:Reformulation}, \eqref{Eq:Constraint} and we provide geometric properties of such solution (see Proposition~\ref{prop:approx}).
To prove Proposition \ref{prop:approx} there are two different arguments:  one is based on results in Section~\ref{SecDualRegularity} and the existence of Lipschitz potentials (Proof 1);  the other one mimics the standard case of optimal transport with minor adaptions due to the cost (Proof 2).
This latter approach leads to a pair of approximately differentiable potentials.
For completeness, we give both proofs.  For simplicity, introduce the notation $d_{\pi/2}(x,y)=d(x,y)\wedge {(\pi / 2)}$.
\begin{lemma} \label{subdiff}
For every $y \in M $, the function $x\mapsto g(x)= \cos^2\left(d_{\pi/2}(x,y)\right)$ is sub-differentiable.
\end{lemma}
\begin{proof}
The function  $ d(\cdot,y)$ is super-differentiable see the proof of \cite[Proposition 6]{McCann2001} for instance. Therefore, $d_{{\pi}/{2}}(\cdot,y)$ is also super-differentiable and 
the function $g$ is sub-differentiable as the combination of a decreasing $C^1$ function and the super-differentiable function  $ x\mapsto d_{{\pi}/{2}}(x,y)$, see \cite[Lemma 5]{McCann2001}. 
\end{proof}
Recall that, for a cost $c$ such that  $y\mapsto \nabla_x c(x,y)$ is injective on its domain of definition ({\it twist condition}), the $c$-exponential map at a point $x$ is defined by the identity $\cexp_x(v)=y$ if and only if $v=-\nabla_xc(x,y)$. We denote by $\tilde \nabla z(x)$ the approximate differential of a function $z:M\to\R$ at a point $x$. 
\begin{proposition}[Brenier's variational solution of $\on{WFR}$-Monge-Amp\`ere]\label{prop:approx}
Let $(\rho_0$, $\rho_1)\in\mathcal{M}_+(M^2)$  and let $(z_0, z_1)$ be generalized optimal potentials for $\on{WFR}^2(\rho_0,\rho_1)$.  
Suppose that $(\rho_0,\rho_1)$ is admissible and $\rho_0$ is absolutely continuous w.r.t $\on{vol}$, then $z_0$ is $\rho_0$ a.e. unique and approximate differentiable on $Supp(\rho_0)$. The optimal plan $\gamma$ in  \eqref{Eq:KLFormulation} is unique, with marginals $\gamma_0=e^{-z_0}\rho_0$, $\gamma_1=e^{-z_1}\rho_1$ and concentrated on the graph of  
\begin{equation}\label{eq:varpphi}
\varphi(x)  =  \exp_x \left(-\arctan \left(\frac{\|  \tilde\nabla z_0( x)\|}{2}\right)   \frac{ \tilde\nabla z_0( x) }{\| \tilde\nabla z_0( x)\| }\right)=\cexp_x (-\tilde \nabla z_0(x))\,,
\end{equation}
that is $\varphi_*\gamma_0=\gamma_1$ and $\gamma = ( \on{Id}\times \varphi )_{*} \gamma_0$. Finally
\begin{equation}\label{eq:optimalite}
\on{WFR}^2(\rho_0,\rho_1)=  \int_M \left(1-e^{-z_0(x)}\right) \ud \rho_0(x) + \int_M \left(1-e^{-z_1(y)} \right)\ud \rho_1(y)\,.
\end{equation}
\end{proposition}

Note that $(z_0, z_1)$ may not be continuous as needed in \eqref{Eq:Reformulation} but \eqref{eq:optimalite} still holds true.
 We start by giving  a simple sketch of the proof following the results in Section~\ref{SecDualRegularity}. We then provide in proof 2 the  approximate differentiability property 
 and we discuss the corresponding formulation of the Monge-Amp\`ere equation.
 
\begin{proof}[Proof 1.]
Corollary \ref{Co:exilip} gives a pair of Lipschitz potentials $(z_0, z_1)$ solution to the dual formulation \eqref{EqDualUOTProblem} of $\on{WFR}^2(\rho_0,\rho_1)$. 
By Lemma \ref{ThLinearization}, the pair $(z_0,z_1)$  is also a solution of a classical Optimal transport problem between $\gamma_0=e^{-z_0}\rho_0$, $\gamma_1=e^{-z_1}\rho_1$ for the cost $c(x,y) = -\log\left(\cos^2\left(d_{\pi/2}(x,y)\right)\right)$. The hypothesis on $\rho_0$ and classical optimal transport theory arguments give the existence of a map $ \varphi(x)=\cexp (-\tilde\nabla z_0(x)) $ solution of this OT problem. In particular $\varphi_*\gamma_0=\gamma_1$. 
\end{proof}

Following the strategy  in the proof of Theorem \ref{ThReduction}, since $ \varphi(x)=\cexp (-\tilde \nabla z_0(x)) $ solves a classical optimal transport problem,  higher regularity of $z_0$ gives higher regularity of marginals $\gamma_0$ and $\gamma_1$ and, in turn,  a bootstrap argument improves the regularity of $z_0$.

\begin{proof}[Proof 2. (Approximate differentiability)]
The proof is an adaptation of \cite[Theorem 6.7]{LMS} using arguments in \cite{McCann2001,villanioldnew}. In particular, we use the notation of \cite{LMS}. Let $(z_0, z_1)$ be a pair of generalized optimal potentials  for $\on{WFR}^2(\rho_0,\rho_1)$ and $\gamma$ an optimal coupling in formulation \eqref{Eq:KLFormulation}, see \cite[Theorem 6.3]{LMS}. We define the associated densities $\sigma_i = e^{-z_i}$, $i=0,1$. 
Since $\rho_0$ and $\rho_1$ are admissible \cite[Theorem 6.3,b]{LMS} implies $\spt (p^1_*(\gamma))=\spt \gamma_0=\spt\rho_0$ and $\spt(p^2_*(\gamma))=\spt\gamma_1=\spt\rho_1$. Therefore, there exist Borel sets $A_i\subset \spt\rho_i $ with $\rho_i(M\setminus A_i)=0  $ such that 
\begin{align}
 \label{eq:sigmaineq}&  \sigma_0(x) \sigma_1(y) \geq \cos^2( d_{{\pi}/{2}}(x,y ) )  &  \on{in  }  A_0 \times A_1 \,,    \\
 \label{eq:sigmaeq} & \sigma_0(x) \sigma_1(y) = \cos^2( d_{{\pi}/{2}}(x,y ) )  &   \gamma\on{-a.e.   in  } A_0 \times A_1\,.
\end{align}
To construct the set of approximate differentiability, define  
\begin{equation*}
    A_{1,n} := \left \{  y\in M \, ;\, \sigma_1(y) \geq 1/n    \right \},
\end{equation*}
and consider the function 
\begin{equation*}
    s_{0,n}(x)= \sup_{y\in A_{1,n}} \frac{\cos^2( d_{{\pi}/{2}}(x,y ) )}{\sigma_{1}(y)}\,.
\end{equation*}

By construction, $s_{0,n} $ is bounded, Lipschitz and thus differentiable  $\mathrm{vol}$-a.e. on $M$.
By definition, we have  $ \sigma_0 \geq s_{0,n}$ and thus the sequence of sets $A_{0,n} :=\left\{  x \in M \,; \,  \sigma_0(x) = s_{0,n}(x)  \right\} $ is increasing. Because of  \eqref{eq:sigmaeq}, the set $ \bigcap_{n=1}^{\infty} (X\setminus A_{0,n})$ is $\rho_0$-negligible. 
Let  
\begin{equation*}A'_{0,n} = \left\{  x \in A_{0,n}  \,; \,  \lim\limits_{r\to 0} \frac{\on{vol}(B(x,r)\cap A_{0,n} )}{\on{vol}(B(x,r))}=1   \;\text{and } s_{0,n} \text{ is differentiable at } x\right\}
\end{equation*}
be the set of points of $A_{0,n}$ with $\on{vol}$-density 1 and where $s_{0,n}$ is differentiable. Then  $ \bigcap_{n=1}^{\infty} (X\setminus A'_{0,n})$ is  $\rho_0$-negligible as well. 
Let $(\bar x,\bar y)\in A'_{0,n}\times A_{1,n}$ be such that 
$$
s_{0,n}(\bar x) \sigma_{1}( \bar y)  = \cos^2( d_{{\pi}/{2}}( \bar x, \bar y) )=\sigma_{0}(\bar x) \sigma_{1}( \bar y)\,.
$$
Using \eqref{eq:sigmaineq},   for all $x \in A_1$, there holds
$$
\sigma_{1}(\bar y)  \geq \cos^2( d_{{\pi}/{2}}(x,\bar y) )/ s_{0,n}(x)\,.
$$
In particular, $\cos^2( d_{{\pi}/{2}}(x,\bar y) )/ s_{0,n}(x)$ achieves its maximum at $\bar x$, implying $0 \in \nabla_{\bar x }^{+} ( \cos^2( d_{{\pi}/{2}} (\cdot,\bar y) )/ s_{0,n}(\cdot))$ (that is, $0$ is a supergradient of $x\mapsto \cos^2( d_{{\pi}/{2}} (x,\bar y) )/ s_{0,n}(x))$ at $\bar x$). Since $s_{0,n}$ is differentiable at $\bar x$, the function $x\mapsto \cos^2( d_{{\pi}/{2}} (x,\bar y))$     is super-differentiable at $\bar x$. By Lemma \ref{subdiff}, it is also sub-differentiable and thus differentiable at $\bar x$. Therefore,

\begin{align}
0&= \frac{\nabla  \cos^2\left( d_{{\pi}/{2}} (\bar x,\bar y) ) \right)}{ s_{0,n}(\bar x)}  - \cos^2( d_{{\pi}/{2}} (\bar x ,\bar y))  \frac{\nabla s_{0,n}(\bar x)}{  s^2_{0,n}(\bar x)} \nonumber\\ 
&= -\frac{ \cos^2( d_{{\pi}/{2}} (\bar x,\bar y) )}{s_{0,n}(\bar x)}\left(2 \tan( d_{{\pi}/{2}} (\bar x ,\bar y)) \nabla\left( \sqrt{2} \sqrt {\frac12 d^2_{{\pi}/{2}} (\bar x,\bar y) }\right)  + \nabla \ln  s_{0,n} (\bar x)\right)\nonumber \\\label{eqA3}
&=-\sigma_1(\bar y)\left(\frac{2 \tan( d_{{\pi}/{2}} (\bar x ,\bar y))}{{d_{{\pi}/{2}} (\bar x ,\bar y)}}\nabla \left( {\frac12 d^2_{{\pi}/{2}} (\bar x,\bar y) }\right) +   \nabla \ln  s_{0,n} (\bar x) \right)\,.
\end{align}
Let $v\in T_{\bar x}M$ be the unique vector such that $\bar y = \exp_{\bar x}(v)$. Then, $v=-\nabla \left( {\frac12 d^2_{{\pi}/{2}} (\bar x,\bar y) }\right)$ and \eqref{eqA3} implies
\begin{equation}\label{eqnablaz}
    \tilde \nabla z_{0} (\bar x)= -\tilde \nabla \ln  \sigma_{0} (\bar x)= -\nabla \ln  s_{0,n} (\bar x) =- 2 \tan (\|v\|)   \frac{v }{\| v\| }\,. 
\end{equation}
Therefore, $\bar y$ is unique $\rho_1$ a.e. and given by 
\begin{equation*}
\bar y  = \exp_{\bar x}\left( v \right)=\exp_{\bar x}\left( -\arctan \left(\frac{\| \tilde \nabla z_0(\bar x)\|}{2}\right)   \frac{\tilde \nabla z_0(\bar x) }{\| \tilde \nabla z_0(\bar x)\| }\right)= \varphi(\bar x)\,.
\end{equation*}

As a consequence,  $\gamma$ is concentrated on the graph of $\varphi$ in particular $\gamma =\left( \on{Id}\times\varphi \right)_*\gamma_0 $ and $\varphi_*\gamma_0=\gamma_1$.
The strict convexity of $\on{KL}$ implies that the marginals $\gamma_0$ and $\gamma_1$ are unique \cite[Theorem 6.7]{LMS} thus $$
z_0=-\log(\sigma_0) =-\log\left(\frac{\ud \gamma_0}{ \ud \rho_0}\right)
$$ 
is unique $\rho_0$ a.e. and $\gamma$ is also unique. Note that we used the admissible condition to say that $\sigma_0$ is $\rho_0$ a.e. positive. In order to prove \eqref{eq:optimalite}, we start from \eqref{Eq:KLFormulation} and a direct computation yields 
\begin{align}\label{egalitepotentiel}
\on{WFR}^2(\rho_0,\rho_1)&= \on{KL}(\gamma_0,\rho_0) + \on{KL}(\gamma_1,\rho_1) + \int_{M^2} c(x,y) \ud \gamma(x,y) \\ 
 \nonumber &= \int_M  \log \left(e^{-z_0} \right) e^{-z_0} \ud \rho_0 + \int_M (1-e^{-z_0}) \ud \rho_0 + \int_M \log \left(e^{-z_1} \right)e^{-z_1} \ud  \rho_1 + \int_M (1-e^{-z_1})\ud \rho_1 \\
 \nonumber &+ \int_{M^2} c(x,\varphi(x)) \ud \gamma(x)\\
 \nonumber &= \int_M (1-e^{-z_0}) \ud \rho_0 + \int_M (1-e^{-z_1})\ud \rho_1 +  \int_M  \left(c(x,\varphi(x)) - z_0(x)- z_1(\varphi(x)) \right) \ud  \gamma_0(x)\\
 \nonumber &= \int_M (1-e^{-z_0}) \ud \rho_0 + \int_M (1-e^{-z_1}) \ud \rho_1.
\end{align}
\end{proof}

As a consequence of the underlying classical OT structure, the potential $z_0$ found in Proposition \ref{prop:approx} is a solution to a Monge-Amp\`ere equation with a right-hand side that also depends on the potential.  
Let us recall  how to derive the equation, assuming that the optimal potential $z_0$ is of class  $C^2$. For the sake of readability, we denote $z_0$ by $z$. Recall that, by definition,  $\cexp_x(v)= \left[(-\nabla_x c) (x,\cdot)\right]^{-1}(v)$, which gives 
\begin{equation*}
    \cexp_x(v)=\exp_{x}\left( -\arctan \left(\frac{\|v\|}{2}\right)   \frac{v }{\| v\| }\right), \end{equation*}
 for $c(x,y)=-\log(\cos^ 2(d_{\pi/2}(x,y))$.
By equation \eqref{eqnablaz} above, $\nabla z$ satisfies
\begin{equation}\label{MA2}
  \nabla z (x)-(\nabla_x c) (x,\varphi(x)) =0\,.
\end{equation}
Identity  \eqref{MA2} is equivalent to 
\begin{equation*}
\varphi(x)=\cexp (-\nabla z(x)),
\end{equation*}
by definition of $\cexp$.
Differentiating \eqref{MA2} with respect to $x$ and taking the determinant yields
\begin{equation}\label{MA3}
\det\left[ -\nabla^2 z(x) +(\nabla_{xx}^2 c)(x,\varphi(x))\right]= \left|\det \left[ (\nabla_{x,y} c)(x,\varphi(x)) \right] \right| \left| \det( \nabla \varphi)\right|\,.
\end{equation}
Notice that, by $c$-concavity of  $z$, the symmetric matrix $ -\nabla^2 z +(\nabla_{xx}^2 c)(x,\varphi(x))$ is  non-negative. Note that $\varphi_*\left((1 + \frac14 \| \nabla z\|^2) e^{-2z} \rho_0\right) = \rho_1$ (see the proof of Proposition \ref{Th:PolarFactorisationStep} below) or, equivalently, 
\begin{equation*}
\left| \det( \nabla \varphi)\right| = e^{-2z }\left( 1 +  \frac14 \| \nabla z \|^2  \right) \frac{f}{g\circ \varphi }\,,
\end{equation*}
where  $f$, respectively  $g$, are Radon-Nikodym densities of $\rho_0$, respectively $\rho_1$, with respect to  $\on{vol}$.
Together with \eqref{MA3}, we deduce the $\on{WFR}$-Monge-Amp\`ere equation 
\begin{multline}\label{MA4}
\det\left[ -\nabla^2 z(x) +(\nabla_{xx}^2 c)(x,\varphi(x))\right]\\= \left|\det \left[ (\nabla_{x,y} c)(x,\varphi(x)) \right] \right| e^{-2z(x) }\left( 1 +  \frac14 \| \nabla z(x) \|^2  \right) \frac{f(x)}{g\circ \varphi(x) }\,,
\end{multline}
where $\varphi$ is given by \eqref{eq:varpphi} and satisfies the second boundary value problem: $\varphi$ maps  $\spt\rho_0$ towards  $\spt\rho_1$. 
\begin{remark}\label{rem:mongeampere}
Another possibility is to write directly the Monge-Ampère equation satisfied by $\varphi$ as an optimal map pushing $\gamma_0$ to $\gamma_1$, that is,
\begin{equation*}
\det\left[ -\nabla^2 z(x) +(\nabla_{xx}^2 c)(x,\varphi(x))\right]= \left|\det \left[ (\nabla_{x,y} c)(x,\varphi(x)) \right] \right|  \frac{e^{-z_0(x) }\rho_0(x)}{e^{-z_1\left(\varphi(x)\right) }\rho_1\circ \varphi(x) }\,\cdot
\end{equation*}
Using $z_0(x)+z_1(\varphi(x))=c(x,\varphi(x)) $ and $1 + \frac14 \| \nabla z_0(x) \|^2 = e^{c(x,\varphi(x))}\,$ one recovers equation \eqref{MA4}.
\end{remark} 
\begin{remark}
Following Brenier \cite[Section~1.4]{Brenier1991} Proposition \ref{prop:approx} can be taken as a definition of variational solutions for the $\on{WFR}$-Monge-Amp\`ere equation \eqref{MA4} with second boundary value problem.
The question of regularity of such a solution is a consequence of Theorem~\ref{ThReduction} in  Section~\ref{SecDualRegularity}: it relies on
regularity of classical a classical OT with cost $c$ and, therefore, on the study of the Ma-Trudinger-Wang tensor associated to $c$ see  \cite{figallide}, \cite[Chapter~12]{villanioldnew}. Note that partial regularity results such as one given in \cite{PMIHES_2015__121__81_0} can also be deduced from Theorem~\ref{ThReduction}. 
\end{remark}

Thanks to Proposition \ref{prop:approx} we are now able to prove the existence, under some assumptions on the initial density, of a solution to the Monge problem $\on{M-WFR}$.
 
\begin{proposition}[Solution of the Monge problem  $\on{M-WFR}$ and equivalence to $\on{WFR}$]\label{Th:PolarFactorisationStep}
Let $(\rho_0,\rho_1)$ be admissible and such that $\rho_0$ is absolutely continuous w.r.t. $\on{vol}$.   Then, there exists a  $\rho_0$-a.e. unique  $c$-concave function  $z:M\to\R$, approximatively differentiable $\rho_0$-a.e., such that the associated unbalanced transport couple $(\varphi,\lambda)$ defined by
\begin{eqnarray}\label{defphi}
\varphi(x) &=& \exp_x\left(-\arctan\left(\frac{1}{2} \|\tilde \nabla z(x) \| \right) \frac{\tilde \nabla z(x)}{\|\tilde \nabla z(x)\|}\right)\,,\\
\lambda(x) &=& e^{-z(x)} \sqrt{1 + \frac14\| \tilde \nabla z(x) \|^2}\label{deflambdai}
\end{eqnarray}
is a solution of the Monge problem \eqref{Eq:MongeFormulation} and satisfies 
\begin{equation}\label{Eq:MAEquation}
\pi[(\varphi,\lambda),\rho_0]=\varphi_*\left(\lambda^2 \rho_0\right)=\varphi_*\left(\left(1 + \frac14 \|\tilde \nabla z\|^2\right) e^{-2z} \rho_0\right) = \rho_1\,.
\end{equation}
Moreover, $(\varphi,\lambda)$ is the unique $\rho_0$-a.e. unbalanced transport couple associated to a $c$-concave potential, also unique, such that $\pi[(\varphi,\lambda),\rho_0]= \rho_1 $. 
The potential $z$ 
is characterized by  
\begin{equation}\label{eq:optimalite2}
\on{M-WFR}^2(\rho_0,\rho_1)=\on{WFR}^2(\rho_0,\rho_1)=  \int_M( 1-e^{-z(x)}) \ud \rho_0(x) + \int_M (1-e^{-\hat z(y)} )\ud \rho_1(y)\,,
\end{equation}
where $\hat z(y)=\inf_{x\in\spt \rho_0}c(x,y)-z(x)$.
\end{proposition}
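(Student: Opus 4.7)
The plan is to take $z=z_0$ and $z^c=z_1$ from Proposition~\ref{prop:approx} (applicable since $(\rho_0,\rho_1)$ is admissible and $\rho_0\ll \on{vol}$), which supplies the $c$-convex approximately differentiable potential, its $c$-conjugate, and the unique optimal plan $\gamma = (\on{Id}\times\varphi)_*\gamma_0$ with marginals $\gamma_0 = e^{-z}\rho_0$ and $\gamma_1 = e^{-z^c}\rho_1$, where $\varphi$ is precisely the map in \eqref{defphi}. I would then verify in turn the constraint $\pi[(\varphi,\lambda),\rho_0]=\rho_1$, the equality of Monge cost and $\on{WFR}^2$, and uniqueness.

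For the constraint, the key input is the specific shape of $c(x,y)=-\log(\cos^2(d(x,y)\wedge \pi/2))$ combined with \eqref{defphi}, which forces $d(x,\varphi(x))=\arctan(\tfrac12 \|\tilde\nabla z(x)\|)$. Hence
\[
e^{c(x,\varphi(x))} \;=\; \cos^{-2}(d(x,\varphi(x))) \;=\; 1+\tfrac14\|\tilde\nabla z(x)\|^2,
\]
and in view of \eqref{deflambdai} this gives $\lambda^2(x)=e^{-2z(x)}\,e^{c(x,\varphi(x))}$. On the support of $\gamma_0$ the $c$-conjugation identity $z(x)+z^c(\varphi(x))=c(x,\varphi(x))$ holds, so $\lambda^2\rho_0 = e^{z^c\circ\varphi}\,\gamma_0$. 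Pushing forward through $\varphi$ and using $\varphi_*\gamma_0=\gamma_1$ from Proposition~\ref{prop:approx} yields $\varphi_*(\lambda^2\rho_0)=e^{z^c}\gamma_1=e^{z^c}e^{-z^c}\rho_1=\rho_1$, which is \eqref{Eq:MAEquation}.

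For optimality, I would expand the integrand $d_{\mathcal{C}(M)}^2((x,1),(\varphi(x),\lambda(x)))$ using the cone geometry and the identity $\lambda(x)\cos(d(x,\varphi(x)))=e^{-z(x)}$, which follows immediately from $\cos(d)=1/\sqrt{1+\tfrac14\|\tilde\nabla z\|^2}$ and \eqref{deflambdai}. This turns the integrand into a linear combination of $1$, $e^{-z(x)}$, and $\lambda(x)^2$. Integrating against $\rho_0$ and using $\int\lambda^2 d\rho_0=|\rho_1|$ (from the preceding step) together with $\int e^{-z}d\rho_0=\int e^{-z^c}d\rho_1=|\gamma|$ reduces the Monge cost to $\int(1-e^{-z})d\rho_0+\int(1-e^{-z^c})d\rho_1$, which by \eqref{eq:optimalite} equals $\on{WFR}^2(\rho_0,\rho_1)$. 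Combining with the already noted relaxation inequality $\on{M-WFR}^2\geq \on{WFR}^2$ closes the chain and proves \eqref{eq:optimalite2}.

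Uniqueness is then obtained by reversing the construction. Any other candidate $(\tilde z,\tilde\varphi,\tilde\lambda)$ with $\tilde z$ $c$-convex, $\tilde\varphi,\tilde\lambda$ derived from $\tilde z$ via \eqref{defphi}--\eqref{deflambdai}, and $\tilde\varphi_*(\tilde\lambda^2\rho_0)=\rho_1$ gives rise to the semi-couplings $\tilde\gamma_0\eqdef e^{-\tilde z}\rho_0$ and $\tilde\gamma\eqdef (\on{Id}\times\tilde\varphi)_*\tilde\gamma_0$; the same algebraic identities, read backwards, show that $\tilde\gamma$ realizes the infimum of \eqref{Eq:KLFormulation} and is concentrated on the graph of an optimal $c$-transport map. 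The uniqueness assertion in Proposition~\ref{prop:approx} then forces $\tilde\gamma=\gamma$, hence $\tilde z=z$ and $\tilde\varphi=\varphi$, $\tilde\lambda=\lambda$ all $\rho_0$-a.e. The main obstacle I anticipate is purely technical: justifying the pushforward/change-of-variable computations at the regularity of an approximately differentiable potential $z$ (rather than $C^1$), which requires invoking the area formula for approximately differentiable maps, exactly as in the classical Brenier--McCann argument.
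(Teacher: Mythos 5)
Your proposal is correct and tracks the paper's own proof closely: both start from Proposition~\ref{prop:approx}, obtain $\pi[(\varphi,\lambda),\rho_0]=\rho_1$ by the same chain of algebraic identities ($z(x)+z^c(\varphi(x))=c(x,\varphi(x))$, $e^{c(x,\varphi(x))}=1+\tfrac14\|\tilde\nabla z(x)\|^2$), and then reduce uniqueness to the uniqueness of the optimal plan established in Proposition~\ref{prop:approx} / \cite[Theorem 6.3(b)]{LMS}. The one place where you diverge usefully is the verification that the Monge cost of the constructed $(\varphi,\lambda)$ equals $\on{WFR}^2(\rho_0,\rho_1)$: you expand $d_{\mathcal{C}(M)}^2\big((x,1),(\varphi(x),\lambda(x))\big)$ with the law-of-cosines formula on the cone, observe $\lambda\cos d=e^{-z}$, and reduce the integral to $|\rho_0|+|\rho_1|-2|\gamma|=\int(1-e^{-z})\ud\rho_0+\int(1-e^{-z^c})\ud\rho_1$. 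This makes explicit a step the paper leaves somewhat implicit (it passes from $\rho_1=\pi[(\varphi,\lambda),\rho_0]$ to $\on{M-WFR}^2=\on{WFR}^2$ via the relaxation inequality and the optimality-conditions check in the uniqueness step); your computation buys a cleaner, self-contained derivation of \eqref{eq:optimalite2} that does not have to route through the uniqueness argument. Your final caveat about justifying the change of variables at mere approximate differentiability of $z$ (via the area formula for approximately differentiable maps, as in McCann's Riemannian argument) is the right thing to flag; the paper similarly works at this level of regularity without elaborating.
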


Proposition~\ref{Th:PolarFactorisationStep} appeared first in a preprint by the first and third author in \cite[Proposition~16]{gallouet2016camassaholm}. Since then, related results were given with the aim of numerical application in \cite{sarrazin2023linearized} and for a more theoretical use in \cite{liero2023fine}.

\begin{proof} We start by proving existence of solution for Monge problem.
Let $(z_0, z_1)$ be optimal potentials for $\on{WFR}^2(\rho_0,\rho_1)$. Then $\hat z_0=z_1$. Set $\sigma_i(x)=e^{-z_i(x)},i=0,1$. From Proposition \ref{prop:approx}, we know that $ x \mapsto \varphi(x)=\exp^M_{ x}\left(-\arctan \left(\frac{\| \tilde \nabla z_0( x)\|}{2}\right)   \frac{\tilde \nabla z_0( x) }{\| \tilde \nabla z_0( x)\| }\right)$ is well defined $\rho_0$ a.e. and 
$\varphi_*(\gamma_0)=\gamma_1$ where $\gamma_i=\sigma_i\rho_i $, $i=0,1$. Therefore
\begin{align*}
\rho_1 &=\sigma_1^{-1} \gamma_1=\sigma_1^{-1} \varphi_*(\gamma_0) =\sigma_1^{-1} \varphi_*\left(\sigma_0 \rho_0\right) \\
 &=  \varphi_*\left( e^{-z_0} \sigma_1^{-1} \circ \varphi  \rho_0\right) = \varphi_*\left( e^{-z_0} e^{z_1\circ \varphi } \rho_0\right)= \varphi_*\left( e^{-z_0} e^{ c(\cdot,\varphi(\cdot))} e^{-z_0}   \rho_0\right) \\
 &=  \varphi_*\left( e^{-2z_0}\left( 1 +  \frac14 \| \tilde \nabla z_0 \|^2  \right) \rho_0 \right)= \varphi_*\left(\left(e^{-z_0} \sqrt{1 + \frac14 \|\tilde \nabla z_0 \|^2} \right)^2  \rho_0\right)\\
 & =\pi\left[ \left( \varphi ,e^{-z_0} \sqrt{1 + \frac14\|\tilde \nabla z_0 \|^2} \right),\rho_0\right].
\end{align*}
where we use that $\rho_0$ a.e.  $z_0(x)+z_1(\varphi(x))=c(x,\varphi(x)) $, $1 + \tan^2(x) = 1/\cos^2(x)$ and thus $1 + \frac14 \| \tilde \nabla z_0(x) \|^2 = e^{c(x,\varphi(x))}\,$. 
Equation \eqref{eq:optimalite}  is exactly \eqref{eq:optimalite2}.

To prove uniqueness, consider $z$ to be a $c$-concave function, such that $(\varphi,\lambda)$ are well defined through \eqref{defphi}, \eqref{deflambdai} and 
$\pi[(\varphi,\lambda),\rho_0]=\rho_1$. Then, we claim that $ \gamma = (\on{Id}\times \varphi)_* (e^{-z} \rho_0)$ is an optimal plan for  $\on{WFR}^2(\rho_0,\rho_1)$ in \eqref{Eq:KLFormulation}. Indeed, let us check that $\gamma$ satisfies the optimality conditions of \cite[Theorem 6.3(b)]{LMS}, namely that its marginal $\gamma_i$ is absolutely continuous w.r.t. $\rho_i$, $i=0,1$ and 
\begin{eqnarray*}
    e^{-z(x)-\hat z(y)}&\geq &\cos^2(d_{\pi/2}(x,y)),\quad~~\forall(x,y)\in M\times M,\\
    e^{-z(x)-\hat z(y)}&= &\cos^2(d_{\pi/2}(x,y)),\quad\gamma\textrm{-a.e.}~ (x,y)\in M\times M.
\end{eqnarray*} 
Indeed, by definition of $\varphi$, 
\begin{equation*}
z(x)+z^c(\varphi(x))= c(x,\varphi(x))\,, 
\end{equation*}
holds $\rho_0$-a.e.,
  and therefore $\gamma_0=e^{-z} \rho_0$-a.e. As a consequence, $\gamma$ is concentrated on the set of equality for a pair $(z,z^c)$ of $c$-concave functions, that is, 
$(z,z^c)$ satisfies for all $(x,y) \in M\times M$, $ z(x)+z^c(y) \leq c(x,y)$ with equality $\gamma$-a.e. Moreover, for $\rho_0$ almost every 
$x\in M$ there holds
\begin{equation*}
   \lambda^2(x)=e^{-2z(x)} (1 + \frac14\| \tilde \nabla z(x) \|^2)= e^{-z(x)}  e^{z^c(\varphi(x))}\,, 
\end{equation*}
  whence
\begin{equation*}
    \rho_1= \varphi_*(\lambda^2\rho_0)=  \varphi_*(   e^{z^c(\varphi(x))} e^{-z(x)} \rho_0)=e^{z^c} \varphi_*(   \gamma_0)= e^{z^c} \gamma_1\,.
\end{equation*}
Thus $\gamma_1= e^{-z^c}\rho_1$ and $\gamma$ is optimal for $\on{WFR}^2(\rho_0,\rho_1)$.
As a consequence, the equality $\on{M-WFR}^2(\rho_0,\rho_1)=\on{WFR}^2(\rho_0,\rho_1)$ holds.
The computation \eqref{egalitepotentiel} yields \eqref{eq:optimalite2} and uniqueness of the generalized optimal potentials for $\on{WFR}^2(\rho_0,\rho_1)$ in Proposition \eqref{prop:approx} implies uniqueness of $(z,\varphi,\lambda)$.
\end{proof}
\subsubsection{Polar factorization}
We end this section by showing  a polar factorization theorem for the automorphism group of the cone $\on{Aut}(\cone(M))$. 
In the sequel, we denote by $\Mes(X,Y)$ the set of measurable maps from a metric space $X$ to a metric space $Y$.
\begin{definition}
The {\it generalized automorphism semigroup} of $\cone(M)$ is the set of measurable maps 
\begin{equation*}
\autgc (\cone(M)) = \Mes(M,M)\ltimes \Mes(M,\R_{>0})\,,
\end{equation*}
endowed with the semigroup law
\begin{equation*}
(\varphi_1,\lambda_1) \cdot (\varphi_2,\lambda_2) = (\varphi_1 \circ \varphi_2,  (\lambda_1 \circ \varphi_2)  \lambda_2 )\,.
\end{equation*}
The {\it stabilizer} of the volume measure  is defined as 
\begin{equation*}
\autgc_{\on{vol}} (\cone(M)) = \left\{ \left(s,\lambda \right) \in \autgc (\cone(M)) \,:\, \pi\left((s,\lambda),\on{vol}\right)=\on{vol}  \right\}\,,
\end{equation*}
where $\pi$ is the submersion defined in \eqref{eq:submersion}, see Proposition~\ref{prop:subriem}. 
By abuse of notation, any $\left(s,\lambda \right) \in \autgc_{\on{vol}} (\cone(M))$ will be denoted  $\left( s, \sqrt{\on{Jac}(s)} \right)$ meaning that for every continuous function $f \in C(M,\R)$
\begin{equation}\label{weakracine}
\int_M f(s(x))\sqrt{\on{Jac}(s)}^2 \ud\on{vol}(x) = \int_M f(x) \ud \on{vol}(x)\,.
\end{equation}
\end{definition}

We denote by $T_{(x,r)}\cone(M)\ni(v,t)\mapsto \exp^{\cone(M)}_{(x,r)}(v,t)$ the exponential map for the cone metric on  $\cone(M)$.
\begin{theorem}[Polar factorization]\label{Th:polardecompo}
Let $(\phi,\lambda) \in \autgc (\cone(M))$ be 
such that the measure $\rho_1:=\pi\left[(\phi,\lambda),\on{vol} \right]$ is absolutely continuous with respect to $\on{vol}$ and $(\on{vol},\rho_1)$ is admissible. 
Then $\on{M-UOT}^2(\on{vol},\rho_1)$ admits a unique minimizer, which is characterized by a $c$-concave function $z_0$. 
Moreover, there exists a unique measure preserving generalized automorphism $(s, \sqrt{\on{Jac}(s)}) \in \isog (\cone(M))$ such that $\on{vol}$-a.e. 
\begin{equation*}
(\phi(x),\lambda(x)) = \exp_{(x,1)}^{\cone(M)}\left(-\frac12 \tilde \nabla p_{z_0}(x), -p_{z_0}(x) \right) \circ (s(x),\sqrt{\on{Jac}(s)}(x))\,
\end{equation*}
or equivalently
\begin{equation*}
(\phi,\lambda) =\left(\varphi,  e^{-z_0} \sqrt{1 + \|\tilde \nabla z_0 \|^2} \right)   \cdot (s,\sqrt{\on{Jac}(s)})\,,
\end{equation*}
where $p_{z_0}=e^{z_0}-1$ and 
\begin{equation*}
\varphi(x) = \exp_x\left(-\arctan\left(\frac{1}{2} \|\tilde \nabla z_0(x) \| \right) \frac{\tilde \nabla z_0(x)}{\|\tilde \nabla z_0(x)\|}\right)\,.
\end{equation*}
Moreover $(s, \sqrt{\on{Jac}(s)})$ is the unique  $L^2(M,\cone(M);\rho_0)$  projection of $ (\phi,\lambda)$ onto $\isog (\cone(M))$.

\end{theorem}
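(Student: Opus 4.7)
The plan is to reduce everything to Proposition~\ref{Th:PolarFactorisationStep}, which already supplies the unique Monge minimizer from $\on{vol}$ to $\rho_1$ together with its c-convex potential. Applying that proposition to the pair $(\on{vol},\rho_1)$—valid since $\on{vol}$ has constant density $1$ and $(\on{vol},\rho_1)$ is admissible—yields at once the first assertion of the theorem and produces the pair $(\varphi,\lambda_0)$ featured in the factorization, with $\lambda_0=e^{-z_0}\sqrt{1+\tfrac14\|\tilde\nabla z_0\|^2}$.

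To build $(s,\sqrt{\on{Jac}(s)})$, I would invert $\varphi$ against $\phi$. By Proposition~\ref{prop:approx}, $\varphi$ is a classical optimal transport map between the absolutely continuous marginals $e^{-z_0}\on{vol}$ and $e^{-z_0^c}\rho_1$, hence admits a $\rho_1$-a.e.\ left inverse $\bar\varphi=\cexp(-\tilde\nabla z_0^c)$ satisfying $\bar\varphi\circ\varphi=\on{Id}$ $\on{vol}$-a.e. Absolute continuity of $\rho_1$ lets us define $\bar\varphi\circ\phi$ $\on{vol}$-a.e., and I then set
\begin{equation*}
s := \bar\varphi\circ\phi, \qquad \mu := \lambda/(\lambda_0\circ s).
\end{equation*}
By construction $\varphi\circ s=\phi$ and $(\lambda_0\circ s)\mu=\lambda$, so $(\varphi,\lambda_0)\cdot(s,\mu)=(\phi,\lambda)$ in $\autgc(\mathcal{C}(M))$.

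The crucial verification is that $(s,\mu)$ lies in $\isog(\mathcal{C}(M))$, i.e., $s_*(\mu^2\on{vol})=\on{vol}$. A direct change-of-variable computation gives
\begin{equation*}
s_*(\mu^2\on{vol}) = \bar\varphi_*\phi_*\bigl(\tfrac{\lambda^2}{(\lambda_0\circ s)^2}\on{vol}\bigr) = \bar\varphi_*\bigl(\tfrac{1}{(\lambda_0\circ\bar\varphi)^2}\phi_*(\lambda^2\on{vol})\bigr) = \bar\varphi_*\bigl(\tfrac{1}{(\lambda_0\circ\bar\varphi)^2}\rho_1\bigr),
\end{equation*}
and substituting $\rho_1=\varphi_*(\lambda_0^2\on{vol})$, pulling the multiplicative factor inside the pushforward, and using $\bar\varphi\circ\varphi=\on{Id}$ returns $\on{vol}$. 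Uniqueness of the factorization then comes for free: any other decomposition $(\phi,\lambda)=(\varphi',\lambda')\cdot(s',\mu')$ with $(\varphi',\lambda')$ a c-convex Monge couple and $(s',\mu')\in\isog$ pushes $\on{vol}$ to $\rho_1$ via $(\varphi',\lambda')$, so Proposition~\ref{Th:PolarFactorisationStep} forces $(\varphi',\lambda')=(\varphi,\lambda_0)$, and then the factorization identities determine $s'=s$ and $\mu'=\mu$ $\on{vol}$-a.e.

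For the $L^2$-projection property, I would invoke the Riemannian submersion of Proposition~\ref{prop:subriem}: for every $(s',\mu')\in\isog(\mathcal{C}(M))$,
\begin{equation*}
\int_M d^2_{\mathcal{C}(M)}\bigl((\phi,\lambda),(s',\mu')\bigr)\,\ud\on{vol} \geq \on{WFR}^2\bigl(\pi_0((\phi,\lambda),\on{vol}),\,\on{vol}\bigr) = \on{WFR}^2(\rho_1,\on{vol}),
\end{equation*}
with equality precisely when the two endpoints arise from a horizontal lift of a minimizing geodesic from $\on{vol}$ to $\rho_1$. Since $(\varphi,\lambda_0)$ is by Proposition~\ref{Th:PolarFactorisationStep} exactly the horizontal lift from $(\on{Id},1)$, the identity $(\phi,\lambda)=(\varphi,\lambda_0)\cdot(s,\mu)$ realizes equality and uniqueness of the minimizer follows from uniqueness of the Monge factor. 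The principal technical obstacle I foresee is making the ``inverse'' $\bar\varphi$ rigorous in the a.e.\ setting: $\varphi$ is only defined $\on{vol}$-a.e.\ and need not be a diffeomorphism, so $\bar\varphi\circ\varphi=\on{Id}$ must be read $\on{vol}$-a.e.\ and the compositions with $\phi$ must be legitimate, which is where absolute continuity of $\rho_1$ (and the classical OT structure highlighted in Proposition~\ref{prop:approx}) is essential.
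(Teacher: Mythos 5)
Your proposal is correct and follows essentially the same route as the paper. You build the inverse couple directly as $\bar\varphi=\cexp(-\tilde\nabla z_0^c)$ and set $s=\bar\varphi\circ\phi$, $\mu=\lambda/(\lambda_0\circ s)$, then check $(s,\mu)\in\isog$ by change of variables; the paper instead applies Proposition~\ref{prop:approx} to the reversed pair $(\rho_1,\on{vol})$ to obtain $(\varphi_1,\lambda_1)$, proves it is the $\autgc$-inverse of $(\varphi_0,\lambda_0)$, and then defines $(s,\lambda_s)=(\varphi_1,\lambda_1)\cdot(\phi,\lambda)$ so that membership in $\isog$ follows directly from the composition property of $\pi$. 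These are the same decomposition presented with a different bookkeeping, and the uniqueness and $L^2$-projection arguments likewise coincide with those in the paper (the paper spells out the right-invariance equality $\int d_{\mathcal C(M)}^2((\varphi_0,\lambda_0)\cdot(s,\sqrt{\on{Jac}(s)}),(s,\sqrt{\on{Jac}(s)}))\,\ud\rho_0 = \int d_{\mathcal C(M)}^2((\varphi_0,\lambda_0),(\on{Id},1))\,\ud\rho_0$ rather than invoking horizontal lifts, but the content is identical).
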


\begin{proof}[Proof.]
Define $\rho_0:=\on{vol}$.
Let $(z_0,z_1)$ be a solution of $\on{WFR}^2(\rho_0,\rho_1)$ and $\gamma$ be an optimal unbalanced transport plan. By symmetry, $(z_1,z_0)$ is a solution of $\on{WFR}^2(\rho_1,\rho_0)$ and $\gamma^t$ is an optimal unbalanced transport plan. Let finally $(\varphi_0,\lambda_0)$ and $(\varphi_1,\lambda_1)$ be the two transport couples given by application of Proposition \ref{prop:approx}  to $(\rho_0,\rho_1)$ and $(\rho_1,\rho_0)$. We split the proof into four  steps. We  denote by $\on{dom}(f)$ the domain of definition of a function $f$.\\

{\bf Step 1: $ \varphi_0 $ and $\varphi_1 $ are inverse maps.}
On $U=\varphi_0^{-1}(\on{dom}{\tilde\nabla z_1})\cap \on{dom}(\tilde \nabla z_0)$ which has full $\gamma_0$ measure and, therefore, full $\rho_0$ measure (we use here the admissible condition to say that $\gamma_0$ and $\rho_0$ have the same support), we have
$$
z_0(x)+z_1(\varphi_0(x))=c(x,\varphi_0(x))
$$
and thus 
 $ \varphi_1 (\varphi_0(x))=x$.  Similarly, it holds $ \varphi_0 (\varphi_1(y))=y$ on $V=\varphi_1^{-1}(\on{dom}{\tilde \nabla z_0})\cap \on{dom}(\tilde \nabla z_1)$ which has full $\rho_1$ measure.\\
 
 {\bf Step 2: $ (\varphi_0,\lambda_0)$ and $ (\varphi_1,\lambda_1) $ are inverse in $ \autgc$.}
 From Step 1, identity  $ \varphi_0 (\varphi_1(y))=y$ holds  $\rho_1$-a.e.. Thus,  $\rho_1$ a.e.
 $$
 (\varphi_0,\lambda_0) \cdot (\varphi_1,\lambda_1) = (\varphi_0\circ \varphi_1 ,\lambda_0 \circ \varphi_1  \lambda_1) = (\on{Id}, (\lambda_0 \circ \varphi_1) \lambda_1) \,.
 $$
Moreover by \eqref{Eq:MAEquation} of Proposition \ref{Th:PolarFactorisationStep} applied twice
\begin{align*}
\pi \left[ (\varphi_0,\lambda_0) \cdot (\varphi_1,\lambda_1), \rho_1 \right]=  \pi \left[ (\varphi_0,\lambda_0),  \pi \left[(\varphi_1,\lambda_1), \rho_1  \right] \right]   =  \pi  \left[ (\varphi_0,\lambda_0),\rho_0 \right]= \rho_1\,.
\end{align*}
It implies that 
$$ \pi \left[  (\on{Id}, (\lambda_0 \circ \varphi_1) \lambda_1) , \rho_1 \right]= \pi \left[ (\varphi_0,\lambda_0) \cdot (\varphi_1,\lambda_1), \rho_1 \right]=  \rho_1 \,.$$
In other words, we have $\rho_1$ a.e. $ (\lambda_0 \circ \varphi_1) \lambda_1=1$ and $\rho_1$ a.e.
$$
 (\varphi_0,\lambda_0) \cdot (\varphi_1,\lambda_1) = (\on{Id},1) \,.
$$
  
 {\bf Step 3: polar factorization.}
 Let $(s,\lambda_s)= (\varphi_1,\lambda_1) \cdot (\phi,\lambda)  = (\varphi_1 \circ \phi ,\lambda_1\circ \phi  \lambda ) $. By construction, one has
 \begin{align*}
\pi \left[ (s,\lambda_s), \rho_0 \right]=  \pi \left[ (\varphi_1,\lambda_1) \cdot (\phi,\lambda), \rho_0 \right] =  \pi \left[ (\varphi_1,\lambda_1),  \pi \left[ (\phi,\lambda),\rho_0  \right] \right]   =  \pi  \left[ (\varphi_1,\lambda_1),\rho_1 \right]= \rho_0\,.
\end{align*}
Therefore, $(s,\lambda_s)$ belongs to $\isog$ and $\lambda_s=\sqrt{\on{Jac}(s)} $ holds in the weak sense \eqref{weakracine}. Thus 
$$
(\phi,\lambda) = (\on{Id},1)\cdot(\phi,\lambda) = (\varphi_0,\lambda_0) \cdot   (\varphi_1,\lambda_1) \cdot (\phi,\lambda) = (\varphi_0,\lambda_0) \cdot (s, \sqrt{\on{Jac}(s)} )\,.
$$
It proves the polar factorization. 

{\bf Step 4: Uniqueness.}
The pair of $c$-concave potentials $(z_0,z_1)$ is optimal for $\on{WFR(\rho_0,  \left[ (\varphi_0,\lambda_0),\rho_0 \right] )} =  \on{WFR(\rho_0,  \rho_1 )}$ and therefore by Proposition \ref{Th:PolarFactorisationStep}, $ z_i $ are unique $\rho_i$ a.e..
We deduce that the projection  $(s,\sqrt{\on{Jac}(s)})= (\varphi_1,\lambda_1) \cdot (\phi,\lambda) $ is also unique $\rho_0$ a.e.. Indeed the positivity of $\lambda$ implies that $\spt(\lambda^2\rho_0)=\spt\rho_0$, thus $\phi$ maps $ \spt\rho_0$ onto $\spt\rho_1$ and the uniqueness of $\varphi_1$ and $\lambda_1$, $\rho_1$ a.e.,
implies the uniqueness of $s$ and $\sqrt{\on{Jac}(s)}$, $\rho_0$ a.e..
To prove that   $(s,\sqrt{\on{Jac}(s)}) $ is the $L^2(M,\cone(M);\rho_0)$  projection of $ (\phi,\lambda)$ onto $\isog (\cone(M))$, we observe  
\begin{align*}
\inf_{(\sigma,\sqrt{\on{Jac}(\sigma)} ) \in \isog (\cone(M)) } & \int_M d^2_{\cone(M)} \left( (\phi,\lambda) , (\sigma,\sqrt{\on{Jac}(\sigma)} ) \right) \rho_0 \geq \on{WFR}^2(\rho_0, \rho_1) \\
& 
=  \int_M d^2_{\cone(M)} \big( (\varphi_0,\lambda_0), (\Id,1) \big) \rho_0 \\ 
& = \int_M d^2_{\cone(M)} \left( (\varphi_0,\lambda_0)\cdot (s,\sqrt{\on{Jac}(s)} ) , (s,\sqrt{\on{Jac}(s)} ) \right) \rho_0\\
& =  \int_M d^2_{\cone(M)} \left((\phi,\lambda) , (s,\sqrt{\on{Jac}(s)} ) \right) \rho_0\,,
\end{align*}
which gives the result.
\end{proof}

As in OT, it is possible to drop the absolute continuity assumption on $\rho_1$ and extend Theorem \ref{Th:polardecompo} to this setting. In such a case, one loses the uniqueness of the measure preserving generalized automorphism $(s, \sqrt{\on{Jac}(s)})$. 
Another extension is to project on the subset of $\autgc(\cone(M))$ 
\begin{equation*}
    \isogh (\cone(M)) = \left\{ \left(s,\lambda \right) \in \autgc(\cone(M)) \, \middle| \, \pi\left((s,\lambda),\rho_0\right)=\mu_0  \right\}\,,
\end{equation*}
in the spirit of \cite[Theorem 3.15]{VillaniTOT}. The proof is similar to the one given above.
Last, the linearization of this polar factorization leads to a Helmholtz decomposition for velocity vector fields. 
\begin{remark}[Polar decomposition for other cost functions]
As explained previously, Remark~\ref{rem:mongeampere}, Proposition~\ref{prop:approx}, Proposition~\ref{Th:PolarFactorisationStep} and Theorem~\ref{Th:polardecompo} are not limited to the case of $\on{WFR}$ metric and can be deduced for more general costs.
For instance, a similar analysis for the Gaussian-Hellinger case in $\R^d$ is even easier to compute. Indeed, 
 in this setting the optimal potential $z$ is semi-concave, thus $\varphi$ turns out to be  the gradient of a convex function
\begin{equation*}
\varphi(x) = x- \nabla z(x) ,
\end{equation*}
and
\begin{equation*}
\lambda(x) = e^{-z(x)+\frac{1}{4} \|  \nabla z(x) \|^2 }.
\end{equation*}
This formulation can be particularly adapted for statistical or numerical applications, which we leave for future work.
\end{remark}
\section{The Ma-Trudinger-Wang tensor in the $\on{WFR}$ case: relations between $c$-convex functions and $\ddcone$-convex functions}\label{SecCconvex}%
In this section we investigate the link between  $c$-convex functions on the base space $M$ and $\mathrm{d}^2_{\cone(M)}$-convex functions on $\cone(M)$. As a consequence, we provide a relation between the MTW-tensor on $M$ for the cost $c$ and the MTW-tensor on $\cone(M)$ for the cost  $\mathrm{d}^2_{\cone(M)}$.

We prove two fundamental facts. Lemma~\ref{ThLemmaCconvexity} states that a function is $c$-convex on $M$ if and only if its (suitably defined) lift is $\ddcone$-convex on  $\cone(M)$. Lemma~\ref{lem:csegment} is concerned with explicit computations along $c$-segments.

Let us recall the definition of cost-convex functions. 
\begin{definition}\label{defconv}\cite[Definition 5.2]{villanioldnew}
 Let $X \times Y \subset M\times M$ be a subset and $c$ be a cost function on $X \times Y$. A function $f: X  \to \R \cup \{ +\infty \}$ is {\it $c$-convex} if it is not identically $+\infty$ and if there exists a function $g: Y \to \R \cup \{ \pm \infty \}$ such that, for every $x \in X$, $f(x)=g^{c}(x)$, where $g^c$ is the {\it $c$-conjugate} of $g$, i.e., 
 \begin{equation*}
     g^c(x) := \sup_{y \in Y}  g(y) - c(x,y) \,.
 \end{equation*} 

 The {\it  $c$-subdifferential} of $f$ at point $\bar x$, denoted by $\partial_c f(\bar x)$,  is the set of $y\in Y$ such that, for every $x\in X$,
\begin{equation*}
f(x)\geq f(\bar x)+c(\bar x,y)-c(x,y).
\end{equation*}
\end{definition}
By definition, $f$ is $c$-convex if and only if, for every $\bar x\in X$, the set $\partial_cf(\bar x)$ is not empty.

In the sequel we set $\cos_+(x,y):=\cos\left(d_{\pi/2}(x,y)\right)$ and we consider the cost $c(x,y) = -\log(\cos^2_+(x,y))$. The corresponding distance on the cone is given by
\begin{equation*}
\ddcone((x,r),(y,s))=r^2+t^2-2 r t \cos_+(x,y).
\end{equation*}
\begin{definition}
 Given a function $f:M\to\R$ we define the {\it lift} of $f$ to $\cone(M)$ as the function $F_f: \cone(M) \to \R$ 
 \begin{equation*}
     F_f(x,r)=r^2(e^{f(x)}-1).
 \end{equation*}  
\end{definition}
This definition naturally arises, recalling that formulation \eqref{Eq:Reformulation} of $\on{WFR}$ metric can be seen as a dual formulation on the cone.

\begin{lemma}\label{ThLemmaCconvexity}
Let $X\times Y\subset M\times M$ and $f:M\to \R$. Then $f$ is $c$-convex on $X\times Y$ if and only if $F_f$ is $\ddcone$-convex on $(X \times \R_+)\times (Y\times \R_+)$. 
In particular, given $(\bar x,\bar r)\in X\times\R_+$,  $y \in \partial_c f (\bar x)$ if and only if $\left(y,s\right)\in \partial_{\ddcone}F_f(\bar x,\bar r)$ where $s=\bar r \frac{e^{f(\bar x)}}{\cos_+(\bar x,y)}$.
Finally, $(F_f)^{\ddcone}=F_{f^c}$. 
\end{lemma}
The last statement in Lemma~\ref{ThLemmaCconvexity} says that the lift of the $c$-conjugate of $f$ coincides with the $\ddcone$-conjugate of the lift of $f$.
\begin{proof}
By Definition~\ref{defconv}, it is sufficient to prove the second statement.

The function $f$ is $c$-convex  on $ X \times Y $, if and only if for every $\bar x\in X$ the $c$-subdifferential of $f$ at $\bar x$ is not empty.  In particular, for every $\bar x \in X$ there exists $y\in  Y$ such that, for every  $x \in X$,
\begin{eqnarray*}
    f(x)&\geq& f(\bar x)+ c(\bar x, y) -c(x,y)\\
    &=&f(\bar x)-\log(\cos^2_+(\bar x, y))+\log(\cos^2_+(x, y)),\nonumber
\end{eqnarray*}
or,  equivalently, for every  $x \in X$,
\begin{equation}\label{fsousdiff2}
 e^{f(x)-f(\bar x)}\frac{\cos^2_+(\bar x,y)}{\cos^2_+(x,y)} \geq 1.
\end{equation}
Let now $\bar r\in\R_+$. Then $ \left(y,s\right)\in \partial_{\ddcone}F_f(\bar x,\bar r)$ if and only if, for every $(x,r)\in X\times \R_+$,
the following inequality holds true  
\begin{equation}\label{Fsousdiff}
   r^2(e^{f(x)}-1)\geq \bar r^2(e^{f(\bar x)}-1) +\ddcone\left((\bar x,\bar r),(y,s))\right) -\ddcone\left((x,r),(y,s))\right).
\end{equation}
Using the definition of $\dcone$, \eqref{Fsousdiff} is equivalent to 
\begin{equation}\label{Fsousdiff2}
   r^2e^{f(x)}\geq \bar r^2e^{f(\bar x)}  -2s\bar r\cos_+(d(\bar x,y)) +2sr\cos_+(d(x,y)).
\end{equation}
Adding  $s^2\cos^2_+(x,y)e^{-f(x)}+s^2\cos^2_+(\bar x,y)e^{-f(\bar x)}$ to both sides of  \eqref{Fsousdiff2}, the inequality becomes

\begin{align}\nonumber
   &e^{f(x)}\left( r- s\cos_+( x,y)e^{-f(x)} \right)^2
   - e^{f(\bar x)}\left( \bar r- s\cos_+(\bar x,y)e^{-f(\bar x)} \right)^2\\ 
   &+ s^2 \cos^2_+(x,y)e^{-f(x)}\left(\frac{\cos^2_+(\bar x,y)}{\cos^2_+(x,y)}e^{f(x)-f(\bar x)} - 1\right)\geq 0\label{Fsousdiff3}
\end{align}
For $F_f$ to be $\ddcone$-convex, \eqref{Fsousdiff3} must be satisfied for every $(x,r)\in X\times \R_+$. When this is the case,  evaluating  \eqref{Fsousdiff3} at $x=\bar x$ implies that, for every $r\in\R_+$,
\begin{equation}\label{ausil}\left( r- s\cos_+(\bar x,y)e^{-f(\bar x)} \right)^2
   - \left( \bar r- s\cos_+(\bar x,y)e^{-f(\bar x)} \right)^2\geq 0.
   \end{equation}
For a given $\bar r \in \R_+$ \eqref{ausil} holds for every $r\in\R_+$ if and only if 
\begin{equation*}
s=\bar r\frac{e^{f(\bar x)}}{\cos_+(\bar x,y)}.
\end{equation*} 
Thus, the (unique) value of $s$ has been identified and we now evaluate  \eqref{Fsousdiff3} at this value. Inequality \eqref{Fsousdiff3} holds for every $(x,r)\in X\times\R_+$ if and only if  
\begin{equation}\label{equality2}
   e^{f(x)}\left( r- s\cos_+(x,y)e^{-f(x)} \right)^2 + s^2 \cos^2_+( x,y)e^{-f(x)}\left(\frac{\cos^2_+(\bar x,y)}{\cos^2_+( x,y)}e^{f(x)-f(\bar x)} - 1\right)\geq 0.
\end{equation}
If \eqref{equality2} holds true for every $(x,r)\in X\times\R_+$, then evaluating at $r=s\cos_+( x,y)e^{-f(x)}$ we infer  that 
\begin{equation*}
\frac{\cos^2_+(\bar x,y)}{\cos^2_+( x,y)}e^{f(x)-f(\bar x)} - 1\geq 0
\end{equation*} 
must be satisfied for every  $x\in X$, that is to say \eqref{fsousdiff2}, i.e., $y\in \partial_cf(\bar x)$.

The other direction is obvious since in Formula~\eqref{equality2} the first term is a square and the second term is nonnegative due to \eqref{fsousdiff2}. 
The proof of $(F_f)^{\ddcone}=F_{f^c}$ is done similarly or can be seen as a consequence of the identification of the subdiffentials.
Remark that all the inequalities hold true if $f$ takes an infinite value.  
\end{proof}

The next lemma makes a link between the notions of $c$-segment on $M$  and $\ddcone$-segment on $\cone(M)$.
Let us recall the definition of  cost-segments on a manifold.
\begin{definition}\label{def:csegment}\cite[Definition 12.10]{villanioldnew}
    Let $c:M\times M\to\R$ be a cost, $\bar x\in M$, and consider the parameterized segment between $q_0,q_1 \in T_{\bar x}M$ given by  $[0,1]\ni t\mapsto q_t=(1-t)q_0+tq_1$.  
    The  {\it $c$-segment}, whenever it is defined, is given by the parameterized curve
\[
[0,1]\ni t\mapsto y_t:=-(\nabla_{x}c(\bar x, \cdot ))^{-1}q_t.
\]
In this case, we refer to $\bar x$ as the {\it base point} of the $c$-segment.
Recalling that, by definition, $\cexp_{\bar x}\left( v\right)=y$ if and only if $-\nabla_xc(\bar x, y)=v$, $c$-segments coincide with the image under $c$-exponential map of segments in the tangent space. 
In order to keep track of the endpoints and basepoint in $M$, we introduce the notation 
$t\mapsto [y_0,y_1]^c_{\bar x}(t)$ for the $c$-segment given by $\cexp_{\bar x}(q_t)$, where $y_i=\cexp_{\bar x}(q_i), i=0,1$. 
\end{definition}

\begin{lemma}[Link between cost-convex segment]\label{lem:csegment}
  Let  $t\mapsto y_t=[y_0,y_1]^c_{\bar x}(t)=\cexp_{\bar x}(q_t)$ be a $c$-segment on $M$, where $q_t=(1-t)q_0+t q_1\in T_{\bar x} M$. Then, for every $(\bar r, a_0)$ with $\bar r \in \R^*_+$, $a_0> - 2\bar r$ there exist $s_0,s_1 \in \R_+$  such that the curve $t\mapsto (y_t,s_t)\in\cone(M)$, with 
  \begin{equation}\label{equs}
      s_t =  \frac{2\bar r +a_0}{2\cos_+(\bar x,y_t)},
  \end{equation}
  coincides with the $\ddcone$-segment $t\mapsto [(y_0,s_0),(y_1,s_1)]^{\ddcone}_{\bar x, \bar r}(t)=\coneexp_{(\bar x,\bar r)}\left(p_t,a_t\right)$ where
  \begin{equation*}
p_t= \left( \bar r^2  +\frac{a_0}{2}\bar r \right)q_t,\quad a_t\equiv a_0. 
\end{equation*}
Conversely, let $t\mapsto (y_t,s_t)\in\cone(M)$ be the  $\ddcone$-segment \begin{equation*}
    t\mapsto [(y_0,s_0),(y_1,s_1)]^{\ddcone}_{\bar x, \bar r}(t)= \coneexp_{(\bar x,\bar r)}\left(p_t,a_t\right),
    \end{equation*}
    where   $p_t=(1-t)p_0+tp_1\in T_{\bar x}M$ and $a_t\equiv a_0>-2\bar r$. 
Then $t\mapsto y_t\in M$ is the $c$-segment of $M$ $t\mapsto[y_0,y_1]^c_{\bar x}(t)=\cexp_{\bar x}(q_t)$  where
$q_t=\frac{2 p_t}{2\bar r^2+\bar r a_0}$. 
 \end{lemma}

\begin{proof}
Recall that $c(x,y)=-\log(\cos_+^2(x,y))$ and $\ddcone((x,r),(y,t))=r^2+t^2-2 r t \cos_+(x,y)$. Thus,
\begin{eqnarray*}
    &&-\nabla_xc(\bar x, z)=\partial_x[\log(\cos_+^2(\bar x,z))]=2 \frac{\partial_x[\cos_+(\bar x,z)]}{\cos_+(\bar x, z)},\\
    &&-\nabla_{(x,r)}\ddcone((\bar x, \bar r), (z, s))=2\left(\bar r s \partial_x [\cos_+(\bar x,z)], -\bar r +  s\cos_+(\bar x,z)\right).
\end{eqnarray*}
Therefore, 
a curve  $t\mapsto y_t\in M$ is the $c$-segment $[y_0,y_1]^c_{\bar x}(t)$ if and only if there exist $q_0,q_1\in T_{\bar x} M$ for which $y_t$ satisfies 
 \begin{equation}\label{csegment}
   (1-t)q_0+tq_1 =- \nabla_{x}c(\bar x, y_t ) = 2 \frac{\partial_x[\cos_+(\bar x,y_t)]}{\cos_+(\bar x, y_t)}, 
  \end{equation}
 where $y_i=\cexp_{\bar x}(q_i), i=0,1$ (for simplicity set $q_t=(1-t)q_0+tq_1$).
Similarly, a curve  $t\mapsto (y_t,s_t)\in\cone(M)$ is a $\ddcone$-segment if and only if there exist $a_0,a_1>0$ and $p_0,p_1\in T_{\bar x}M$ for which $(y_t,s_t)$ satisfies
\begin{equation}\label{conesegment}
    \begin{cases}
    -\partial_x \ddcone((\bar x,\bar r),(y_t,s_t)) = 2\bar rs_t \partial_x [\cos_+(\bar x,y_t)] = (1-t) p_0+t p_1\\
      -\partial_r \ddcone((\bar x, \bar r),(y_t,s_t)) = -2\bar r + 2s_t\cos_+(\bar x,y_t) = (1-t) a_0+ t a_1\,.
\end{cases}
    \end{equation}
Let $t\mapsto y_t=[y_0,y_1]^c_{\bar x}(t)=\cexp_{\bar x}(q_t)$, with $q_t=(1-t)q_0+t q_1\in T_{\bar x}M$. For simplicity, we look for solutions of \eqref{conesegment} where $a_0=a_1$. If $t\mapsto \cos_+(\bar x,y_t)$ does not vanish,  the second equation gives
\begin{equation*}
    s_t=\frac{a_0+2\bar r}{2 \cos_+(\bar x,y_t)},
\end{equation*}
which is strictly positive if $a_0>-2 \bar r$.
Plugging such choice of $s_t$ in the first equation of system \eqref{conesegment}, we look for $p_0,p_1\in T_{\bar x }M$ satisfying
\begin{equation*}
\bar r\frac{a_0+2\bar r}{\cos_+(\bar x,y_t)} \partial_x [\cos_+(\bar x,y_t)]=(1-t) p_0+t p_1.
\end{equation*}
Using \eqref{csegment}, the identity above reads
\begin{equation*}
    \frac{\bar r}{2}(a_0+2\bar r) q_t=(1-t) p_0+t p_1,
\end{equation*}
which is satisfied by the choice $p_i=\frac{\bar r}{2}(a_0+2\bar r)q_i$, $i=0,1$.

Conversely,  assume a $\ddcone$-segment is given by 
\begin{equation*}
    t\mapsto (y_t,s_t)=[(y_0,s_0),(y_1,s_1)]^{\ddcone}_{(\bar x, \bar r)}(t)= \coneexp_{(\bar x,\bar r)}\left(p_t,a_0\right),
\end{equation*}
 where $p_t=(1-t)p_0+t p_1$ and $a_0>-2\bar r$. Then the pair $(y_t,s_t)$ satisfies \eqref{conesegment}.
Define 
$q_t=\frac{ 2 p_t}{a_0\bar r + 2\bar r^2}$. Since $t\mapsto p_t$ is affine, so is $t\mapsto q_t$. Moreover by \eqref{conesegment}, $q_t$ satisfies 
\begin{equation*}
    q_t=\partial_x[\log(\cos_+^2(d(\bar x,y_t)))]=-\nabla_xc(\bar x,y_t).
\end{equation*} 
Therefore $t\mapsto y_t$ is a $c$-segment between endpoints $y_0,y_1$ and with base point $\bar x$.
\end{proof}
A direct consequence of the correspondence between $c$-segments and $\ddcone$-segments is the following.

\begin{corollary}[Link between cost convexity domains]
   Let $ Y\times \R_{>0} \subset \cone(M) $ be a $\ddcone$-convex set with respect to $(\bar x,\bar r )\in \cone(M)$. Then $ Y \subset M $ is a $c$-convex set with respect to $\bar x \in M$.
\end{corollary}
\begin{proof}
By definition see \cite[Definition~12.11]{villanioldnew}, $Y\times \R_+ \subset \cone(M) $ is $\ddcone$-convex set with respect to $(\bar x,\bar r )$ if every pair of points in  $Y\times \R_+$ can be joined by a $\ddcone$-segment with base point $(\bar x,\bar r)$. Take  $y_0,y_1\in Y$  such that there exists $q_0,q_1\in T_{\bar x}M$ with the property $y_i=\cexp_{\bar x}(q_i), i=0,1$. Let $a_0>-2 \bar r$ and define 
\begin{eqnarray*}
  p_i&=&\left(\bar r^2+\frac{a_0}{2}\bar r\right)q_i,~~i=0,1,\\
    s_i&=&\frac{2\bar r+a_0}{2\cos_+(\bar x, y_i)},~~i=0,1.
\end{eqnarray*}
By construction, the $\ddcone$-segment 
\begin{equation*}
    t\mapsto(y_t,s_t):=\coneexp_{(\bar x,\bar r)}((1-t)p_0+t p_1,a_0)
\end{equation*} is contained in $Y\times \R_+$ and has endpoints  $(y_0,s_0), (y_1,s_1)$. 
By Lemma~\ref{lem:csegment}, the curve $t\mapsto y_t$ coincides with the $c$-segment $\cexp_{\bar x}(q_t)$, where $q_t=(1-t)q_0+t q_1$.
\end{proof}

Assume $t\mapsto y_t=[y_0,y_1]^c_{\bar x}(t)\in M$ is a $c$-segment. The {\it support function} along $y_t$ is defined by  $h_x:[0,1]\to \R$ 
       \begin{equation*}
h_x(t) = c(\bar x,y_t) - c(x,y_t)\, .
\end{equation*} 
A synthetic formulation for the sign of the MTW  tensor is also given by the quasi or plain convexity of the support function along a cost-segment see for instance \cite[Section 1.5.b,c,d]{gallouetphd} for a summary of what was developed in \cite[Theorem 12.36, Proposition 12.15(i)]{villanioldnew} and \cite[Theorem 2.7]{kim2012towards}. 
The second crucial lemma of this section makes the link between support function along a $c$-segment and the support function along the lifted $\ddcone$-segment on $\cone(M)$. 
\begin{lemma}\label{Lem:csegmentequivalence} Assume $t\mapsto y_t=[y_0,y_1]^c_{\bar x}(t)\in M$ is a $c$-segment with 
support function $t\mapsto h_x(t)$.
Let $t\mapsto(y_t, s_t)=[(y_0,s_0),(y_1,s_1)]^{\ddcone}_{(\bar x, \bar r)}(t)$ be any $\ddcone$-segment associated to $[y_0,y_1]^c_{\bar x}(t)$ throughout Lemma \ref{lem:csegment} and denote by $H_{(x,r)}:[0,1]\to\R $ the corresponding support function, namely
   \begin{equation*}
H_{(x,r)}(t) = \ddcone((\bar x,\bar r ),(y_t,s_t) - \ddcone((x,r),(y_t,s_t)).
\end{equation*} 
Then $h_x$ and $H_{(x,r)}$ satisfy the following identity
\begin{equation*}
 h_x(t)= 2 \log \left( \frac{H_{(x,r)}(t)-\bar r^2 +r^2}{a_0\bar r+2\bar r^2}  +1 \right). 
\end{equation*}
\end{lemma}
Remark that for $h_x,H_{(x,r)}$ to be well defined the cost $c$ must satisfy some smoothness condition ensuring that $q_t$ is in the domain of definition of $\cexp$.

\begin{proof}
By definition  
\begin{align}\nonumber
   h_x(t) &= c(\bar x,y_t) - c(x,y_t)
   =-\log(\cos^2_+(\bar x,y_t))+\log(\cos^2_+(x,y_t))\\
   &= 2\log \left( \frac{\cos_+(x,y_t)}{\cos_+(\bar x,y_t)} \right) \,.\label{formulationc}
\end{align}
The support function on $\cone(M)$ is given by 
\begin{align*} 
H_{(x,r)}(t) &= \ddcone((\bar x,\bar r ),(y_t,s_t)) - \ddcone((x,r),(y_t,s_t))\\  
    &= \bar r^2 - r^2+2rs_t\cos_+(x,y_t) - 2\bar r s_t\cos_+(\bar x,y_t).
\end{align*}
Since $(y_t,s_t)$ is a $\ddcone$-segment, thanks to Lemma~\ref{lem:csegment}, it satisfies \eqref{equs}, whence  
\begin{equation*}
      2\bar rs_t =  \frac{a_0\bar r+2\bar r^2}{\cos_+(\bar x,y_t)}=  \frac{\bar a}{\cos_+(\bar x,y_t)},
\end{equation*}
where $\bar a=a_0\bar r+2\bar r^2>0$. Thus 
\begin{equation*}
H_{(x,r)}(t)-\bar r^2 +r^2=  \bar a\left(\frac{r\cos_+(d(x,y_t))}{\bar r\cos_+(d(\bar x,y_t))} - 1\right).
\end{equation*}
Finally
\begin{equation*}
\log \left( H_{(x,r)}(t)-\bar r^2 +r^2 + \bar a \right) - \log \bar a = \log\left( \frac{r\cos_+(d(x,y_t))}{\bar r\cos_+(d(\bar x,y_t))} \right),
\end{equation*}
which provides the statement thanks to   \eqref{formulationc}. 
\end{proof}

Thanks to Lemma~\ref{lem:csegment} and Lemma~\ref{Lem:csegmentequivalence} 
we provide an instance of an answer to the question raised in \cite[Example 3.9]{kim2007continuity}, which states 
\begin{quote}
    ``It remains interesting to find more general sufficient conditions on a Riemannian
manifold $(M, g)$ and function $f$ "..." for $f(d(x, y))$ to be strictly or weakly regular.''
\end{quote}
We prove hereafter the following sufficient condition: if the MTW tensor associated with $\ddcone$ satisfies the MTW weak condition on $\cone$, then so does the MTW tensor associated with $c(x,y)=-\log(\cos^2_+(d(x,y)))$ on $M$. 
Recall that the latter cost is associated with the Wasserstein-Fisher-Rao metric, see Corollary \ref{Co:exilip}.
It is worth mentioning that the argument we exhibit works for any cost $j(x,y)$ on the base manifold as long as $\nabla_x j(x,\cdot)$ is injective and continuous with inverse continuous on a small neighborhood of all $y_0\in M$.

We have two proofs of this result based on two different characterizations of MTW weak condition, one using quasi-convexity of  $c$-segments and the other one based on the so-called {\it assumption (C)}.

\begin{definition}\cite[p.288]{villanioldnew}
    A cost $c$ on $M \times M$ satisfies {\it Assumption (C)}~if for every $c$-convex function $f$ and for every $x \in M$ in its domain, the $c$-subdifferential $\partial_c f(x)$ is connected.
\end{definition}
\begin{lemma}\label{ThAssumptionC}
    If $\ddcone$ satisfies assumption (C) on $\cone(M)$ then $c$  satisfies assumption (C) on $M$. 
\end{lemma}
\begin{proof}
To prove assumption (C) for $c$, let $f:M\to\R$ be a $c$-convex function. (Note that  both on $M$ and on $\cone(M)$ connectedness is equivalent to path-connectedness.) Take $y_1,y_2\in\partial_cf(\bar x)$. Then, by Lemma~\ref{ThLemmaCconvexity}, $(y_i,s_i)\in\partial_{\ddcone}F_f(\bar x,\bar r)$, where $s_i=\frac{\bar r e^{f(\bar x)}}{\cos_+(\bar x,y_i)}$. By assumption (C) on $\ddcone$, $\partial_{\ddcone}F_f(\bar x,\bar r)$ is connected, hence there exists a continuous path $t\mapsto(y_t,s_t)\in \partial_{\ddcone}F_f(\bar x,\bar r)$, with endpoints $(y_0,s_0), (y_1,s_1)$. Again, by Lemma~\ref{ThLemmaCconvexity}, $(y_t,s_t)\in \partial_{\ddcone}F_f(\bar x,\bar r)$  if and only if 
\begin{equation*}
 y_t\in \partial_cf(\bar x),\quad   s_t=\frac{\bar r e^{f(\bar x)}}{\cos_+(\bar x,y_t)}.
\end{equation*}
In particular, $t\mapsto y_t$ is a continuous path in $\partial_cf(\bar x)$ between endpoints $y_0,y_1$, whence $\partial_cf(\bar x)$  is connected.
\end{proof}
We are in a position to provide the main theorem of this section. 
Recall that a cost $c$ satisfies the MTW weak condition if and only if, for every pair of points,  the MTW tensor associated with $c$ computed at any pair of $c$-orthogonal vectors is nonnegative (see also  Section \ref{sec:leeli}).
\begin{theorem}\label{ThmequiMTW}
    If  $\ddcone$ on $\cone{(M)}$ satisfies the MTW weak condition, then the cost $c$ on $M$ satisfies the MTW weak condition.
\end{theorem}
We give two proofs of Theorem~\ref{ThmequiMTW}.
\begin{proof}[Proof 1.]
Recall that, under some convexity assumptions,  
\cite[Theorem 12.42]{villanioldnew} states that assumption (C) is equivalent to MTW weak condition. Both costs $\ddcone$ on $\cone (M)$ and $c$ on $M$ satisfy the requirements in \cite[Theorem 12.42]{villanioldnew}. Therefore, applying the result to $\ddcone$  we deduce that $\ddcone$ satisfies assumption (C). By Lemma~\ref{ThAssumptionC} also $c$ satisfies assumption (C) on $M$. Applying  \cite[Theorem 12.42]{villanioldnew} to $c$ we conclude that $c$ satisfies MTW weak condition.
\end{proof}
\begin{proof}[Proof 2.] 
By the results \cite[Proposition 12.15 (i), Theorem 12.42]{villanioldnew},
under the same convexity assumptions, 
MTW weak condition for a cost is equivalent to 
 the quasi-convexity of the support function along any cost-segment (see also  
 \cite[Theorem 2.7]{kim2012towards} where this remark was made for the first time and \cite[Section 1.5.b,c,d]{gallouetphd} for a summary).
  Assume $\ddcone$ satisfies MTW weak condition on $\cone(M)$. Then, for every $\ddcone$-segment $t\mapsto (y_t,s_t)$, the support function $H_{(x,r)}(t)=\ddcone((\bar x,\bar r), (y_t,s_t))-\ddcone((x,r), (y_t,s_t))$   is quasi-convex, i.e., 
 \begin{equation*}
     H_{(x,r)}(t)\leq \max \left(H_{(x,r)}(0),H_{(x,r)}(1) \right).
 \end{equation*}
 Let $t\mapsto y_t\in M$ be a $c$-segment, $x\in M$. By Lemma~\ref{lem:csegment}, $y_t$ is the projection on $M$ of a $\ddcone$-segment $t\mapsto (y_t,s_t)$. Moreover, by 
 Lemma~\ref{Lem:csegmentequivalence}, the support function $t\mapsto h_x(t)$ along $y_t$ and $t\mapsto H_{(x,r)}(t)$ are related by 
 \begin{equation*}
  h_x(t)= 2 \log \left( \frac{H_{(x,r)}(t)-\bar r^2 +r^2}{a_0\bar r+2\bar r^2}  +1 \right). 
\end{equation*}
By hypothesis, $H_{(x,r)}(t)$ is quasi-convex.  
Since $\log$ is an increasing function,  $\max \left(H_{(x,r)}(0),H_{(x,r)}(1) \right)=H_{(x,r)}(j)$ is equivalent to $\max \left(h_{x}(0),h_{x}(1) \right)=h_{x}(j)$. Since $a_0\bar r+2\bar r^2>0$, quasi-convexity of $t\mapsto H_{(x,r)}(t)$   implies quasi-convexity of $t\mapsto h_x(t)$.
We apply \cite[Proposition 12.15 (i), Theorem 12.42]{villanioldnew} to the cost $c$ and we conclude that   $c$ satisfies MTW weak condition. 
\end{proof}
Note that the argument to prove Theorem~\ref{ThmequiMTW} uses a synthetic strategy as illustrated in \cite[Chapter 26]{villanioldnew}, whereas the assumption is formulated in such a way that it could be tested via direct computations. Nevertheless, we tried unsuccessfully to implement a proof based only on symbolic computations.

\begin{remark} In order to mention some weaker results, it is useful to review this section, keeping \cite[Theorem~12.42]{villanioldnew} in mind.
Lemma \ref{Lem:csegmentequivalence} states an equivalence for $c$ to be regular on $M$ and for $\ddcone$ to be regular on a specific set of $\ddcone$-segments of $\cone(M)$. 
Whereas Lemma \ref{ThLemmaCconvexity} states an equivalence for $c$ to satisfy assumption (C) on $M$ and $\ddcone$ to satisfy assumption (C) on a specific class of $\ddcone$-convex functions of $\cone(M)$.
Both these conditions imply the weak MTW condition. 
Therefore assumption (C) or regularity for $\ddcone$ on a subdomain on these specific sets is enough to ensure that MTW weak condition for $c$ 
holds true on a subdomain on the base space. 
To prove Theorem \ref{ThmequiMTW} we also used the reverse results that assumption (C) or regularity for $\ddcone$ on a totally $\ddcone$-convex set D are implied by MTW weak condition for $\ddcone$.
\end{remark}

We end this section by applying the strategy used to obtain Theorem~\ref{ThmequiMTW} to derive a result on the cross-curvature tensor. 
Cross-curvature tensor is essentially the curvature tensor of the Kim-McCann metric without the orthogonality condition, see \cite{kim2012towards}. It is also referred to as $MTW(0,0)$ \cite[Section 1.5.b,c,d]{gallouetphd}. 
Thus, asking nonnegativity of the cross-curvature tensor is a stronger condition than asking for the MTW weak condition to hold true. 
Nevertheless, this condition enjoys a useful property, namely it is known to pass to Riemannian submersions and products of manifolds (as proved in \cite{kim2012towards}), i.e., nonnegativity of cross-curvature tensor is preserved, which may not be the case for the nonnegativity of the MTW tensor.

\begin{theorem}
    If the cross-curvature tensor for $\ddcone$ on  $\cone(M)$ is nonpositive, then so is the cross-curvature tensor for the cost $-\log(\cos^2_+(d(x,y)))$ on $M$. 
    \end{theorem}
\begin{proof}
A synthetic formulation for the sign of the cross-curvature tensor associated with a cost $c$ is given by the convexity/concavity of the support function along a c-segment 
\cite[Section 1.5.b,c,d]{gallouetphd} or \cite[Theorem 2.10]{kim2012towards}, namely:  convexity of support functions is equivalent to nonnegative cross-curvature tensor whereas concavity is equivalent to a nonpositive cross-curvature tensor.
Using Lemma \ref{Lem:csegmentequivalence} and the fact that $\log $ is a concave increasing function we get that 
$t\mapsto H_{(x,r)}(t)$ concave implies $t\mapsto h_x(t)$ is also concave. 
 \end{proof}
This result is not of direct interest for smoothness of unbalanced optimal transport since it requires nonnegativity of the cross-curvature tensor rather than nonpositivity.

As $\log$ is concave we cannot prove here a result similar to Theorem \ref{ThmequiMTW}, that would push the nonnegativity of cross-curvature tensor from the cone onto the base space.

 \section{Summary and future directions}
We have shown, not unsurprisingly, that regularity for unbalanced optimal transport can be reduced to the one of optimal transport through the linearization of the dual problem. 
Regularity, being a structural result in itself, is interesting outside analysis. 
For instance, regularity of optimal transport maps is key to mitigate the curse of dimension of statistical optimal transport as done in \cite{vacher2021dimension} and to obtain minimax rate of convergence for the statistical estimation of optimal potentials \cite{muzellec2021nearoptimal}. Our results should allow similar gains in the statistical estimation of unbalanced optimal transport. 
We focus on Wasserstein-Fisher-Rao metric since it is the natural length space associated with the problem. This particular case leads us to examine the MTW condition of the induced cost. 

Furthermore,  an open application of pur polar factorization can lead to new numerical scheme for the Camassa-Holm equation as done for incompressible Euler in \cite{gallouet2016lagrangian}.

Finally, we provide  an example of answer to a question (formulated in  \cite{kim2007continuity}) by showing that  weak MTW condition on the cone implies weak  MTW condition  on the base manifold. A surprisingly different result holds for cross-curvature, whose nonpositivity on the cone implies nonpositivity of the corresponding cost on the manifold.

\section*{Acknowledgements}
The second author acknowledges project “ConDiTransPDE”, Control, diffusion and transport problems in PDEs and applications,  project number E83C22001720005, funded by Universit\`a degli Studi di Roma ``Tor Vergata'', Rome Italy, Project PRIN2022 MUR Code 2022W58BJ5 "PDEs and optimal control methods in mean field games, population dynamics and multi-agent models" funded by Ministry of University and Research Italy and  the MUR Excellence Department Project MatMod@TOV awarded to the Department of Mathematics, University of Rome Tor Vergata, CUP E83C23000330006
\\
The last author was supported by the Bézout Labex (New Monge Problems), funded by ANR, reference ANR-10-LABX-58.

\bibliographystyle{plain}
\bibliography{articles}

\begin{thebibliography}{10}

\bibitem{UserGuideOT}
L.~Ambrosio and N.~Gigli.
\newblock {\em A user's guide to optimal transport}.
\newblock Lecture Notes in Mathematics. Springer Berlin Heidelberg, 2013.

\bibitem{bauer2021square}
Martin Bauer, Emmanuel Hartman, and Eric Klassen.
\newblock The square root normal field distance and unbalanced optimal
  transport.
\newblock {\em Appl. Math. Optim.}, 85(3):Paper No. 22, 40, 2022.

\bibitem{benamou2000computational}
J-D. Benamou and Y.~Brenier.
\newblock A computational fluid mechanics solution to the {M}onge-{K}antorovich
  mass transfer problem.
\newblock {\em Numerische Mathematik}, 84(3):375--393, 2000.

\bibitem{refId0}
{Benamou, Jean-David}.
\newblock Numerical resolution of an "unbalanced" mass transport problem.
\newblock {\em ESAIM: M2AN}, 37(5):851--868, 2003.

\bibitem{bredies2020superposition}
Kristian Bredies, Marcello Carioni, and Silvio Fanzon.
\newblock A superposition principle for the inhomogeneous continuity equation
  with {H}ellinger-{K}antorovich-regular coefficients.
\newblock {\em Comm. Partial Differential Equations}, 47(10):2023--2069, 2022.

\bibitem{Brenier1991}
Yann Brenier.
\newblock Polar factorization and monotone rearrangement of vector-valued
  functions.
\newblock {\em Comm. Pure Appl. Math.}, 44(4):375--417, 1991.

\bibitem{caf92}
L.~Caffarelli.
\newblock The regularity of mappings with a convex potential.
\newblock {\em J. Amer. Math. Soc.}, 5:99--104, 1992.

\bibitem{GeneralizedOT1}
L.~{Chizat}, B.~{Schmitzer}, G.~{Peyr{\'e}}, and F.-X. {Vialard}.
\newblock {An Interpolating Distance between Optimal Transport and Fisher-Rao}.
\newblock {\em Found. Comp. Math.}, 2016.

\bibitem{GeneralizedOT2}
Lenaic Chizat, Gabriel Peyre, Bernhard Schmitzer, and Fran\c{c}ois-Xavier
  Vialard.
\newblock Unbalanced optimal transport: dynamic and {K}antorovich formulations.
\newblock {\em J. Funct. Anal.}, 274(11):3090--3123, 2018.

\bibitem{chizat2016scaling}
Lenaic Chizat, Gabriel Peyr{\'e}, Bernhard Schmitzer, and Fran{\c{c}}ois-Xavier
  Vialard.
\newblock Scaling algorithms for unbalanced transport problems.
\newblock {\em Mathematics of Computation}, 2018.

\bibitem{figallide}
G.~De~Philippis and A.~Figalli.
\newblock The {M}onge-{A}mp\`ere equation and its link to optimal
  transportation.
\newblock {\em Bull. Amer. Math. Soc.}, 51:527--580, 2014.

\bibitem{philippis2013mongeampre}
Guido De~Philippis and Alessio Figalli.
\newblock The {M}onge--{A}mp{\`e}re equation and its link to optimal
  transportation.
\newblock {\em Bulletin of the American Mathematical Society}, 51(4):527--580,
  2014.

\bibitem{F-DP}
Guido De~Philippis and Alessio Figalli.
\newblock The {M}onge-{A}mp\`ere equation and its link to optimal
  transportation.
\newblock {\em Bull. Amer. Math. Soc. (N.S.)}, 51(4):527--580, 2014.

\bibitem{PMIHES_2015__121__81_0}
Guido De~Philippis and Alessio Figalli.
\newblock Partial regularity for optimal transport maps.
\newblock {\em Publications Math\'ematiques de l'IH\'ES}, 121:81--112, 2015.

\bibitem{deponti2020entropytransport}
Nicol\`o De~Ponti and Andrea Mondino.
\newblock Entropy-transport distances between unbalanced metric measure spaces.
\newblock {\em Probab. Theory Related Fields}, 184(1-2):159--208, 2022.

\bibitem{DelanoeGeometryOT}
Philippe Delano\"e.
\newblock Differential geometric heuristics for riemannian optimal mass
  transportation.
\newblock In Boris Kruglikov, Valentin Lychagin, and Eldar Straume, editors,
  {\em Differential Equations - Geometry, Symmetries and Integrability},
  volume~5 of {\em Abel Symposia}, pages 49--73. Springer Berlin Heidelberg,
  2009.

\bibitem{Feydy2017}
Jean Feydy, Benjamin CHARLIER, François-Xavier Vialard, and Gabriel Peyré.
\newblock Optimal transport for diffeomorphic registration.
\newblock In {\em MICCAI 2017}, Quebec, Canada, September 2017.

\bibitem{freed1989}
D.~S. Freed and D.~Groisser.
\newblock The basic geometry of the manifold of riemannian metrics and of its
  quotient by the diffeomorphism group.
\newblock {\em Michigan Math. J.}, 36(3):323--344, 1989.

\bibitem{gallouetphd}
Thomas Gallou{\"e}t.
\newblock {\em {Optimal Transport : Regularity and applications}}.
\newblock Theses, {Ecole normale sup{\'e}rieure de lyon - ENS LYON}, December
  2012.

\bibitem{gallouet2016lagrangian}
Thomas~O. Gallou\"{e}t and Quentin M\'{e}rigot.
\newblock A {L}agrangian scheme \`a la {B}renier for the incompressible {E}uler
  equations.
\newblock {\em Found. Comput. Math.}, 18(4):835--865, 2018.

\bibitem{AndreaRelaxation}
Thomas~O. Gallou\"{e}t, Andrea Natale, and Fran\c{c}ois-Xavier Vialard.
\newblock Generalized compressible flows and solutions of the {$H({\rm div})$}
  geodesic problem.
\newblock {\em Arch. Ration. Mech. Anal.}, 235(3):1707--1762, 2020.

\bibitem{GALLOUET20184199}
Thomas Gallouët and François-Xavier Vialard.
\newblock The camassa–holm equation as an incompressible euler equation: A
  geometric point of view.
\newblock {\em Journal of Differential Equations}, 264(7):4199 -- 4234, 2018.

\bibitem{gallouet2016camassaholm}
Thomas Gallouët and François-Xavier Vialard.
\newblock The camassa-holm equation as an incompressible euler equation: a
  geometric point of view, preprint arXiv:1609.04006v1, 2016.

\bibitem{GeorgiouPower}
Tryphon~T. Georgiou, Johan Karlsson, and Mir~Shahrouz Takyar.
\newblock Metrics for power spectra: An axiomatic approach.
\newblock {\em IEEE Transactions on Signal Processing}, 57(3):859--867, 2009.

\bibitem{khesin2008geometry}
B.~Khesin and R.~Wendt.
\newblock {\em The geometry of infinite-dimensional groups}, volume~51.
\newblock Springer Science \&amp; Business Media, 2008.

\bibitem{kim2007continuity}
Young-Heon Kim and Robert~J. McCann.
\newblock Continuity, curvature, and the general covariance of optimal
  transportation.
\newblock {\em J. Eur. Math. Soc.}, 12, 2007.

\bibitem{kim2012towards}
Young-Heon Kim and Robert~J. McCann.
\newblock Towards the smoothness of optimal maps on riemannian submersions and
  riemannian products (of round spheres in particular).
\newblock {\em Journal für die reine und angewandte Mathematik (Crelles
  Journal)}, 2012(664):1--27, 2012.

\bibitem{new2015kondratyev}
S.~Kondratyev, L.~Monsaingeon, and D.~Vorotnikov.
\newblock A new optimal trasnport distance on the space of finite {R}adon
  measures.
\newblock {\em Adv. Differential Equations}, 21(11):1117--1164, 2016.

\bibitem{kondratyev2019spherical}
Stanislav Kondratyev and Dmitry Vorotnikov.
\newblock Spherical {H}ellinger-{K}antorovich gradient flows.
\newblock {\em SIAM J. Math. Anal.}, 51(3):2053--2084, 2019.

\bibitem{kondratyev2019convex}
Stanislav Kondratyev and Dmitry Vorotnikov.
\newblock Convex {S}obolev inequalities related to unbalanced optimal
  transport.
\newblock {\em J. Differential Equations}, 268(7):3705--3724, 2020.

\bibitem{laschos2018geometric}
Vaios Laschos and Alexander Mielke.
\newblock Geometric properties of cones with applications on the
  {H}ellinger-{K}antorovich space, and a new distance on the space of
  probability measures.
\newblock {\em J. Funct. Anal.}, 276(11):3529--3576, 2019.

\bibitem{LeeLi}
Paul W.~Y. Lee and Jiayong Li.
\newblock New examples satisfying {M}a-{T}rudinger-{W}ang conditions.
\newblock {\em SIAM J. Math. Anal.}, 44(1):61--73, 2012.

\bibitem{LMS}
M.~{Liero}, A.~{Mielke}, and G.~{Savar{\'e}}.
\newblock {Optimal Entropy-Transport problems and a new Hellinger-Kantorovich
  distance between positive measures}.
\newblock {\em Inventiones Math.}, August 2018.

\bibitem{liero2023fine}
Matthias Liero, Alexander Mielke, and Giuseppe Savar\'{e}.
\newblock Fine properties of geodesics and geodesic {$\lambda$}-convexity for
  the {H}ellinger-{K}antorovich distance.
\newblock {\em Arch. Ration. Mech. Anal.}, 247(6):Paper No. 112, 73, 2023.

\bibitem{MTWregularity}
Xi-Nan Ma, Neil~S. Trudinger, and Xu-Jia Wang.
\newblock Regularity of potential functions of the optimal transportation
  problem.
\newblock {\em Arch. Ration. Mech. Anal.}, 177(2):151--183, 2005.

\bibitem{McCann2001}
R.J. McCann.
\newblock Polar factorization of maps on riemannian manifolds.
\newblock {\em Geometric {\&} Functional Analysis GAFA}, 11(3):589--608, 2001.

\bibitem{muzellec2021nearoptimal}
Boris Muzellec, Adrien Vacher, Francis Bach, François-Xavier Vialard, and
  Alessandro Rudi.
\newblock Near-optimal estimation of smooth transport maps with kernel
  sums-of-squares, preprint arXiv:2112.01907, 2021.

\bibitem{AndreaEmbedding}
Andrea Natale and Fran\c{c}ois-Xavier Vialard.
\newblock Embedding {C}amassa-{H}olm equations in incompressible {E}uler.
\newblock {\em J. Geom. Mech.}, 11(2):205--223, 2019.

\bibitem{rockafellar1971integrals}
R.T. Rockafellar.
\newblock Integrals which are convex functionals. {II}.
\newblock {\em Pacific Journal of Mathematics}, 39(2):439--469, 1971.

\bibitem{sarrazin2023linearized}
Clément Sarrazin and Bernhard Schmitzer.
\newblock Linearized optimal transport on manifolds, preprint arXiv:2303.13901,
  2023.

\bibitem{shen2021accurate}
Zhengyang Shen, Jean Feydy, Peirong Liu, Ariel~H Curiale, Ruben San
  Jose~Estepar, Raul San Jose~Estepar, and Marc Niethammer.
\newblock Accurate point cloud registration with robust optimal transport.
\newblock In M.~Ranzato, A.~Beygelzimer, Y.~Dauphin, P.S. Liang, and J.~Wortman
  Vaughan, editors, {\em Advances in Neural Information Processing Systems},
  volume~34, pages 5373--5389. Curran Associates, Inc., 2021.

\bibitem{sturmfels2023toric}
Bernd Sturmfels, Simon Telen, Fran\c{c}ois-Xavier Vialard, and Max von Renesse.
\newblock Toric geometry of entropic regularization.
\newblock {\em J. Symbolic Comput.}, 120:Paper No. 102221, 15, 2024.

\bibitem{sjourn2019sinkhorn}
Thibault Séjourné, Jean Feydy, François-Xavier Vialard, Alain Trouvé, and
  Gabriel Peyré.
\newblock Sinkhorn divergences for unbalanced optimal transport, arXiv
  1910.12958, 2019.

\bibitem{sejourne2020unbalanced}
Thibault Séjourné, François-Xavier Vialard, and Gabriel Peyré.
\newblock The unbalanced gromov wasserstein distance: Conic formulation and
  relaxation, preprint arXiv 2009.04266, 2020.

\bibitem{vacher2021dimension}
Adrien Vacher, Boris Muzellec, Alessandro Rudi, Francis Bach, and
  Francois-Xavier Vialard.
\newblock A dimension-free computational upper-bound for smooth optimal
  transport estimation, preprint arXiv:2101.05380, 2021.

\bibitem{VillaniTOT}
C\'{e}dric Villani.
\newblock {\em Topics in optimal transportation}, volume~58 of {\em Graduate
  Studies in Mathematics}.
\newblock American Mathematical Society, Providence, RI, 2003.

\bibitem{villanioldnew}
C\'{e}dric Villani.
\newblock {\em Optimal transport old and new}, volume 338 of {\em Grundlehren
  der mathematischen Wissenschaften [Fundamental Principles of Mathematical
  Sciences]}.
\newblock Springer-Verlag, Berlin, 2009.

\bibitem{pmlr-v129-wang20c}
Zihao Wang, Datong Zhou, Ming Yang, Yong Zhang, Chenglong Rao, and Hao Wu.
\newblock Robust document distance with wasserstein-fisher-rao metric.
\newblock In Sinno~Jialin Pan and Masashi Sugiyama, editors, {\em Proceedings
  of The 12th Asian Conference on Machine Learning}, volume 129 of {\em
  Proceedings of Machine Learning Research}, pages 721--736. PMLR, 18--20 Nov
  2020.

\end{thebibliography}

\end{document}